\documentclass[journal]{IEEEtran}

\usepackage{graphicx}
\usepackage{cite}
\usepackage{amsfonts, amsmath, amssymb}
\usepackage{amsthm}
\usepackage{tikz}
\usepackage{tikz-network}
\usepackage{standalone}
\usepackage{subcaption}
\usepackage{todonotes}
\usepackage{enumerate}
\usepackage{float}
\usepackage{xcolor}
\usepackage{empheq}

\usepackage{etoolbox}

\makeatletter

\def\@IEEEBIOskipN{0.75\baselineskip}

\expandafter\patchcmd\csname\string\IEEEbiography\endcsname
  {plus 1fil}
  {plus 0pt}
  {}
  {\PackageWarning{main_bioopt}
    {Could not patch IEEEbiography spacing}}

\patchcmd{\IEEEbiographynophoto}
  {4\baselineskip plus 1fil}
  {\@IEEEBIOskipN plus 0pt}
  {}
  {\PackageWarning{main_bioopt}
    {Could not patch IEEEbiographynophoto spacing}}

\makeatother

\newtheorem{definition}{Definition}[section]

\newtheorem{theorem}[definition]{Theorem}
\newtheorem{lemma}[definition]{Lemma}
\newtheorem{corollary}[definition]{Corollary}

\newtheorem{example}[definition]{Example}

\newcommand{\continuation}{??}

\newtheorem{assumption}[definition]{Assumption}

\AtEndEnvironment{example}{\hfill$\lozenge$}
\AtEndEnvironment{theorem}{\hfill$\triangle$}

\global\long\def\G{\mathcal{G}}
\global\long\def\E{\mathcal{E}}
\global\long\def\V{\mathcal{V}}

\global\long\def\R{\mathbb{R}}
\global\long\def\IM{\operatorname{im}}
\global\long\def\rk{\operatorname{rk}}
\global\long\def\ker{\operatorname{ker}}

\graphicspath{{figures/}}

\newif\ifimportant

\importantfalse

\begin{document}
\setcounter{dbltopnumber}{1}
\title{Formation control from the generic combinatorial viewpoint: 
 edge dynamics and directed sensing
}
\author{Louis Theran, Daniel
  Zelazo,~\IEEEmembership{Senior~Member,~IEEE}, Sean Dewar, and Bernd Schulze
  \thanks{L. Theran is with the School of Mathematics and Statistics,
    University of St Andrews, Scotland. D. Zelazo is with the Stephen
    B. Klein Faculty of Aerospace Engineering, Technion-Israel
    Institute of Technology, Haifa 3200003, Israel. S. Dewar is with the
    Numerical Analysis and Applied Mathematics (NUMA) unit at KU
    Leuven. B. Schulze is with
    the School of Mathematical Sciences, Lancaster University, UK.
    This work was supported by the Israel Science
    Foundation grant no. 453/24, UK Research and Innovation through
    the grants UKRI1112 and UKRI2397, and the Heilbronn Institute for
    Mathematical Research (HIMR) through the International Visitors Scheme.
    S. Dewar was additionally supported by the FWO grants G0F5921N
    (Odysseus) and G023721N, and by the KU Leuven grant iBOF/23/064.
    We thank the ICMS in Edinburgh, where this  project was initiated,
  for its hospitality.}
}

\maketitle

\begin{abstract}
We develop a geometric framework for distance-based formation control
that separates the evolution of inter-agent distances from its
realization by compatible node motions, reducing the stability problem
to the edge space. We show that local exponential convergence of the
edge dynamics implies local exponential convergence of the formation,
and that stability is certified by spectral properties of a linear
edge operator. We introduce a hierarchy of generic spectral
properties --- weak admissibility, admissibility, and strong
admissibility --- that provide necessary conditions for local
exponential stability. Specializing to directed sensing, we obtain a
necessary and sufficient spectral condition for local stability at an
arbitrary target, together with a quadratic sufficient certificate.
These conditions reveal that stability depends jointly on the graph
orientation and target geometry, and show that persistence is neither
necessary nor sufficient for local convergence. 
We show that every generically rigid graph admits an
admissible orientation and,
for acyclic
orientations, we give an exact combinatorial characterization of
admissibility. Finally, the quadratic certificate leads to a
semidefinite program for synthesizing stabilizing edge gains.
\end{abstract}

\begin{IEEEkeywords}
  formation control; nonlinear control; multi-agent systems; directed sensing; rigidity theory
\end{IEEEkeywords}

\section{Introduction}

The distance-constrained formation control problem has emerged as one
of the canonical problems for multi-agent coordination
\cite{AhnBook2020}.  The
objective is to steer a group of agents to a prescribed geometric
shape, specified by inter-agent distances, using only local sensing
and distributed control actions.  The classical solution, originally proposed
by \cite{krickBrouckeFrancis}, is a
gradient-based controller that uses the squared edge length
error as a potential in node space and follows the negative
gradient.  When the sensing graph is generically $d$-rigid,
the gradient controller is generically locally exponentially
stable, as shown by \cite{Sun2016}.  In fact, for certain classes of
rigid graphs, one
can design decentralized control laws for which all stable equilibria
correspond to desired formations, although the presence of
undesirable equilibria remains a fundamental challenge
\cite{Chen_SIAMJCO2017}. 

A complementary geometric viewpoint was developed in
\cite{Dorfler_TAC2010}, where attention was shifted from the agent
configuration to the space of feasible squared edge lengths, showing
that these measurements evolve on a manifold determined by the
rigidity constraints. This edge-space perspective is closely related
to the approach we will take here, where we further separate the
desired evolution of the edge measurements from its realization by
compatible motions of the agents. In the undirected setting, these
viewpoints are tied together by the symmetric gradient structure of
the closed-loop dynamics.

This structure fundamentally changes under \emph{directed sensing}.
The resulting interactions become
non-reciprocal, leading to closed-loop dynamics that can no longer be
considered gradient flows.  In particular, the symmetry and
positive-semidefinite structure that underlie the local convergence
theory for undirected formations are lost. The theory of
\emph{persistence} was developed to describe the geometric
implications of these directed constraints
\cite{Hendrickx_IJRNC2007}.  Persistence is fundamentally a kinematic notion, as it characterizes whether a directed assignment of distance constraints is compatible with maintaining a rigid formation under admissible agent motions. It does not, however, describe the dynamics induced by a particular feedback law or determine whether the desired formation is a stable equilibrium. 
Persistence and its variants have nevertheless played
a central role in the analysis and design of directed formations
\cite{Hendrickx_IJRNC2007,FidanYuAnderson2007,Anderson_CDC2007,Babazadeh_CDC2020}.

Consequently, passing from persistence to stability requires additional dynamical analysis. For minimally persistent planar formations with a
leader--first-follower structure, \cite{Yu_SIAMJCO2009} constructed
agent-dependent gains that locally stabilize the
formation. More recently, \cite{Zhang_JDSMC2018} established
nonlinear asymptotic stability of the directed controller for
minimally persistent formations constructed by directed
Henneberg-type vertex additions, in both two and three dimensions.
More generally, several works have
obtained convergence guarantees for special classes of directed
formations, including acyclic and leader–follower structures
\cite{CaoCDC2008,Yu_SIAMJCO2009, LiuDeQueiroz2020,Mirzaeedodangeh2024}, often by exploiting
additional structure in the interaction topology. At the same time,
it has been shown that, due to the decentralized information flow and
the topology of the configuration space, global stabilization is
generically impossible for broad classes of directed formations
\cite{BelabbasTAC2013}.

The central question addressed in this paper is: 
\begin{center}
\emph{Given an
arbitrary orientation of a rigid graph and a target formation,
when is the resulting directed distance-based controller
  locally stable, and what structural properties of the orientation
promote stability}?
\end{center}
We adopt a geometric
viewpoint based on the relationship between node-space dynamics and
edge-space representations. Specifically, we study formation control
through an auxiliary system evolving on the manifold of feasible
distance measurements, which we then lift to agent dynamics. 
This construction leads to a general family of
compatible formation controllers that includes the classical gradient
controller, its directed counterpart, and a canonical minimum-norm
lift of the edge-space gradient flow.

The key advantage of our viewpoint is that it allows the convergence
problem to be studied directly in edge space. We show that local
exponential convergence of the edge dynamics implies local
exponential convergence of the agent configuration to a realization
congruent to the target. Moreover, local exponential stability is
characterized by the linearized edge operator restricted to the
tangent space of feasible edge measurements, with positive
definiteness of its quadratic form providing a stronger, readily
computable sufficient certificate.

The edge-based perspective also allows us to reveal the dependence between the
graph orientation and the target configuration in the directed
setting. The edge dynamics for the directed case are governed locally
by a generally nonsymmetric operator, and local stability may
therefore depend on the particular target even when the orientation
is fixed. To distinguish this target-dependent stability question
from generic properties of the orientation, we introduce three nested
spectral notions: \emph{weak admissibility}, \emph{admissibility},
and \emph{strong admissibility}. These respectively capture generic
full rank of the edge operator, invertibility of its restriction to
the feasible edge tangent space, and hyperbolicity of that
restriction. Local exponential stability at a generic target implies
strong admissibility. We also show that every generically rigid graph
admits  an admissible orientation.

For the special case of acyclic graphs, we show weak admissibility,
admissibility, strong admissibility, and local exponential stability
at every generic target are equivalent. We further show that this
structure extends beyond acyclic formations. A stable directed rigid
subformation may serve as the root of an acyclic extension. 
These results identify structural
classes of directed architectures for which stability can be inferred
from generic properties of the orientation rather than tested
separately at each target.  

We also compare our notions of admissibility to persistence, 
which we show does not characterize stability of the
feedback dynamics considered here.  Combined with our positive 
results, we see that stability of the directed controller 
depends on the spectral properties of the edge operator and 
not on the controller's information architecture.

Finally, since the quadratic
stability certificate is affine in the edge weights, we formulate a
semidefinite program for synthesizing stabilizing gains without
altering the underlying sensing architecture.



\paragraph*{Notations}
 We write \(\mathbb{R}\) for the set of real numbers and \(\mathbb{Q}\) 
 for the set of rational numbers. For a matrix $A$, $A^\top$ denotes 
 its transpose, $A^\dagger$ its
Moore--Penrose pseudoinverse, $\IM A$ its image, $\ker A$ its kernel,
and $\mathrm{rk}\, A$ its rank. We write $A\succ 0$ ($A\succeq 0$)
for symmetric positive definite (semidefinite). For a subspace $W$,
$W^\perp$ is its orthogonal complement. For a smooth map $F$,
$\mathrm{d}F_x$ denotes the differential at $x$. All norms are
Euclidean unless stated otherwise. For a manifold $\mathcal{M}$,
$T_x\mathcal{M}$ denotes the tangent space at $x$. The quadratic form
of a linear operator $T$ is $q_T(x):=\langle x,T(x)\rangle$, and for a
$T$-invariant subspace $V$, $T|_V:V\to V$ denotes the restriction.


\section{A Geometric Framework\\ for Formation Control}\label{sec.geoframework}

The starting point for this work is an ensemble of $n$ agents,
modeled by integrator dynamics,
\begin{align}\label{integrator}
  \dot p_i & = u_i,\quad i=1,\ldots,n,
\end{align}
where $p_i \in \R^{d}$ is the state (position) of agent $i$, and
$u_i \in \R^d$ is the control.  The aggregate position (control)
vector is denoted as $p =
\begin{bmatrix} p_1^\top & \cdots & p_n^\top
\end{bmatrix}^\top$ (similarly for $u$). We assume the agents
interact by an information exchange network described by the
undirected graph~$\G=(\V,\E)$.

The \emph{target configuration} $p^*$ has an associated set
of desired inter-agent square distances, $m_{ij}^*$, for each
$ij\in \E$; $m^* \in \R^{|\E|}$ is the aggregate target distance
vector. We  define the \emph{distance map},
$F \,:\,\mathbb{R}^{d n} \to \mathbb{R}^{ |\E|}$, by
\begin{align}\label{distance_map}
  F(p) =
  \begin{bmatrix} \cdots & \|p_j-p_i\|^2 & \cdots
  \end{bmatrix}^\top,
\end{align}
so that $F(p^*) = m^*$.  From now on, the squared edge length
measurement $F(p^*)$ of a target formation $p^*$ will be denoted $m^*$.
The distance-based formation control
problem is to drive an initial configuration $p^0$ to a point $q$
congruent and close to $p^*$ (such that $F(q)=m^*$),
using, at each time $t$, the information $m^*$ and  $m = F(p)$.
We introduce two
assumptions that we will use at various points in this paper.
\begin{assumption}\label{asump.drigid-reg}
  The graph $\mathcal G$ is $d$-rigid
  and the target configuration $p^*$ is a regular point (preimage of
  a regular value)
  of the rigidity map $F$.
\end{assumption}
\begin{assumption}\label{asump.drigid-geneic}
  The graph $\mathcal G$ is $d$-rigid
  and the target configuration $p^*$ is generic.
\end{assumption}
Here generic means that $p^*$ holds over the
open, dense complement of an unspecified exceptional set
that is defined by polynomials with rational coefficients.
Every generic configuration is regular, and if $p^*$ is
generic, then $F(p^*)$ is generic in the image of $F$.
A graph $\mathcal{G}$ is $d$-rigid if, for every configuration
$p^*$ so that $F(p^*)$ is smooth in the image of $F$, the
framework $(\mathcal{G},p^*)$ is locally rigid; in particular, when
$\mathcal{G}$ is $d$-rigid, the frameworks $(\mathcal{G},p^*)$
for regular $p^*$ will be rigid (see, e.g., \cite{Gortler_AJM2010}).

\subsection{A general family of formation controllers}

The formation control objective is specified entirely in terms of the
target distance vector $m^*$.
It is natural, therefore, to consider both the node positions $p$
and the distance measurements
$m$ as state-variables related by the constraint $F(p)=m$.  Rather
than designing the node
dynamics directly, we instead consider a more general problem of
constructing compatible node and
edge dynamics that evolve on the constraint manifold.

Since the constraint is that $m$ lies in the image of $F$, we
denote by $\mathcal{Q}$ the regular values of
the map $F$.  Because $F$ is a polynomial mapping defined over
$\mathbb{Q}$, the semialgebraic Sard theorem \cite[p. 234]{RAG}
implies that $\mathcal{Q}$ is a smooth
semialgebraic set that is open and dense in the image of $F$.
(The image of $F$ itself is not smooth.  We will see presently why
we use $\mathcal{Q}$ instead of the smooth locus of ${\rm Im}\, F$.)
We also note that $\mathcal{C} :=
F^{-1}(\mathcal{Q})\subseteq \R^{dn}$, the set of
regular points of $F$, is a smooth semi-algebraic set of dimension
$dn$. Let $R(p)$ denote the rigidity matrix of $(\G,p)$, so that
$$F(p)=R(p)p, \quad {\rm d}F_p=2R(p):T_p\mathcal{C}\rightarrow T_{F(p)}\mathcal{Q}.$$

At any point $m = F(p)$ in $\mathcal{Q}$,
the tangent space $T_m\mathcal Q$ is the vector space of differential
changes to the squared
edge lengths that can arise from the infinitesimal motions of
the agents, i.e.,
\[
  T_m\mathcal{Q} = \IM R(p).
\]

The manifold $\mathcal Q$ inherits the Euclidean inner product from
$\mathbb{R}^{|\mathcal E|}$, defining a Riemannian metric,
\[
  g_m(\xi_1,\xi_2) = \xi_1^\top \xi_2, \qquad \xi_1,\xi_2\in T_m\mathcal Q.
\]
Let $\Pi(m)\,:\,\R^{|\mathcal{E}|}\to
T_m\mathcal Q$ denote the orthogonal projector onto $T_m\mathcal Q$.
If $m$ is a regular value of $F$, for any $p\in F^{-1}(m)$, we have
\begin{equation}\label{eq: projector}
  \Pi(m) = R(p)R(p)^\dagger.
\end{equation}
(Note that if $m$ is smooth but not regular,
  then the formula for $\Pi(m)$ will be valid for some
$p$ in the fiber over $m$ but not others.) In particular, although
the expression on the right is written in terms of $p$, the resulting
projector depends only on $m$.

With the geometry specified above, the state-space for the node-edge
system may be defined as
\begin{equation}\label{ss-model}
  \mathcal{S} = \{(p,m)\in \mathcal{C}\times \mathcal{Q} : F(p) = m\},
\end{equation}
which is a smooth, irreducible, semi-algebraic set equipped with the natural
projections $\pi_1$ and $\pi_2$.
At any point $(p,m) \in \mathcal S$, its tangent space is
\[
  T_{(p,m)} \mathcal{S} = \{(\dot p,\dot m) \in T_p \mathcal{C}\times
  T_m \mathcal{Q} : \dot m = 2 R(p)\dot p\}.
\]

This setup leads to the central abstraction proposed in this paper.
Instead of designing control laws in the node space, we define a
formation controller by specifying compatible dynamics on the joint
node-edge state space $\mathcal S$.
The geometric relationship between the node and edge spaces is
illustrated in Figure~\ref{fig:geometric_framework}.

\begin{figure}[t]
  \centering
\includegraphics[
    width=0.5\columnwidth
]{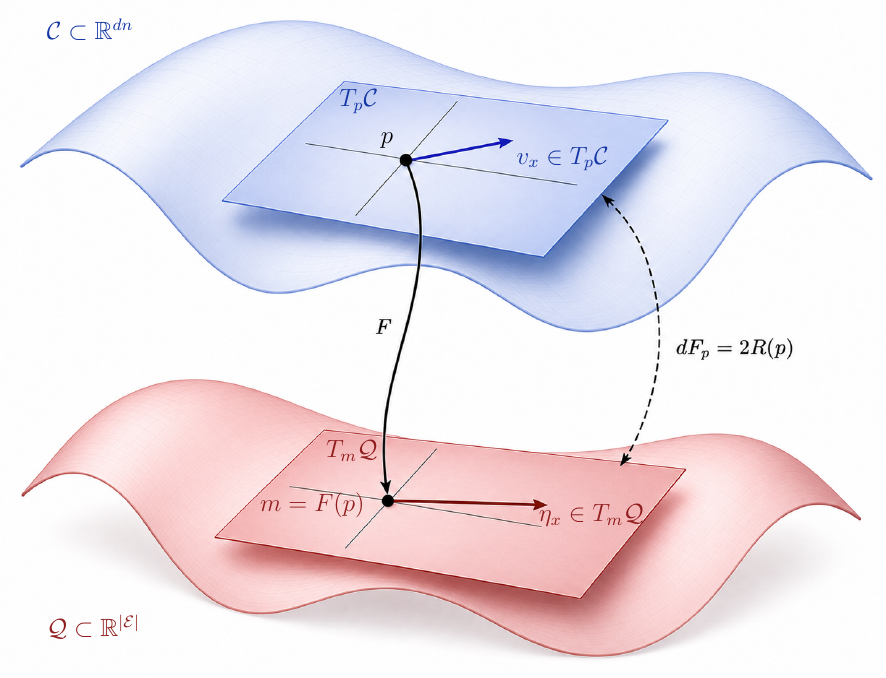}
  \caption{Geometric structure underlying compatible formation
    controllers. The distance map $F$ sends a configuration
    $p\in\mathcal C$ to its feasible edge measurement
    $m=F(p)\in\mathcal Q$. At the tangent level, node and edge velocities
    are related by
    $\eta_{(p,m)} = 2R(p)\nu_{(p,m)}$,
    which is the compatibility condition.} 
  \label{fig:geometric_framework}
\end{figure}

\begin{definition}[Compatible formation controller]\label{def.compatform}
  Let $\G$ be a generically $d$-rigid graph (which, along with the
    dimension $d$,
  determines, $\mathcal{Q}$ and $\mathcal{S})$.
  A \emph{compatible formation controller} on $\G$ is specified by a pair of
  state-dependent linear operators
  \[
    \nu_{(p,m)}:\R^{|\E|}\rightarrow(\R^d)^n,
    \qquad
    \eta_{(p,m)}:\R^{|\E|}\rightarrow\R^{|\E|},
  \]
  defined smoothly on $\mathcal S$,
  such that
  \[
    (\nu_{(p,m)}x,\eta_{(p,m)}x)\in
    T_{(p,m)}\mathcal S
  \]
  for every $(p,m)\in\mathcal S$ and every
  $x\in\R^{|\E|}$.
\end{definition}

The operator $\eta_{(p,m)}$ specifies the desired evolution of the
edge measurements, while $\nu_{(p,m)}$ realizes this edge motion in
node space. Since
\[
  (\nu_{(p,m)}x,\eta_{(p,m)}x)\in T_{(p,m)}\mathcal S,
\]
the induced node and edge velocities remain compatible with the
constraint $m=F(p)$. Equivalently,
\[
  \eta_{(p,m)} = 2R(p)\nu_{(p,m)},
\]
where the equality is understood as an identity of linear maps.

Given a target
formation $(\mathcal{G},p^*)$ with edge measurements
$m^*$ and a starting formation $(\mathcal{G},p^0)$, the
closed-loop dynamics are given by the initial value problem (IVP)
\begin{subequations}\label{eq:pm-dynamics}
  \begin{align}
    \dot p &= \nu_{(p,m)}(m^* - m) \label{eq:pm-dynamics-a}\\
    \dot m &= \eta_{(p,m)}(m^* - m) \label{eq:pm-dynamics-b}
  \end{align}
\end{subequations}
with initial conditions $p(0)=p^0$ and $m(0)=m^0=F(p^0)$.

For technical reasons, we will also allow controllers to be defined only
on an open neighborhood in $\mathcal{S}$ of $(p^*,m^*)$.  Controllers
in this family yield solutions $(p,m)\in \mathcal{S}$.  When
we discuss the \emph{edge dynamics} of one of these controllers, we
mean the flow $\pi_2(p,m)\in \mathcal{Q}$, and the node dynamics are,
similarly, $\pi_1(p,m)\in \mathcal{C}$.  The edge and node
dynamics have the
same smoothness as the full solution.
For notational ease, we use the shorthand $\eta := \eta_{(p,m)}$
and $\eta^*:=\eta_{(p^*,m^*)}$ (similarly for $\nu$).

Because distance measurements determine a rigid formation only up to congruence, stability of the node dynamics is understood relative to
the congruence class of the target.  This leads to a stability definition we adopt for the remainder of the work.
\begin{definition}[Local exponential stability]\label{def: locally exponentially stable}
A compatible controller is \emph{locally exponentially stable}
at a target configuration $p^*\in \mathcal{C}$ if there is a 
neighborhood $U\ni p^*$ such that, for all initial conditions 
$(p^0,F(p^0))$ with $p^0\in U$ the node dynamics converge
exponentially to a configuration $q\in U$ that is congruent 
to $p^*$.
\end{definition}

We now present our first main result showing that the edge dynamics
actually control the
convergence behavior of the node dynamics.

\begin{theorem}[Edge-to-node convergence]
  \label{thm:edge-to-node}
  Let Assumption~\ref{asump.drigid-reg} hold, and let
  $(\nu,\eta)$ be a compatible formation controller defined on an
  open neighborhood  $V\subseteq\mathcal S$ of $(p^*,m^*)$, where
  $\nu$ is smooth. Suppose further there exist constants
  $C,c>0$ such that every solution of
  \eqref{eq:pm-dynamics} with initial condition
  $(p^0,m^0)\in V$ satisfies
  \begin{equation}\label{edge_expconv}
    \|m(t)-m^*\|\le Ce^{-ct}\|m^0-m^*\|, \qquad t\ge0.
  \end{equation}
  Then there exists a neighborhood
  $U\subseteq V$ such that, for every initial
  condition $(p^0,m^0)\in U$, there is a configuration
  $q$ congruent to $p^*$ satisfying
  \[
    \|p(t)-q\|
    \le
    Ke^{-ct}\|m^0-m^*\|,
    \qquad t\ge0,
  \]
  where $K>0$ is independent of the initial condition. In particular, the controller is locally exponentially stable at $p^*$ in the sense of Definition~\ref{def: locally exponentially stable}.
\end{theorem}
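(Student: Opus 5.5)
Since $\nu$ is linear in its argument and smooth on $V$, the node velocity is controlled by the edge error. I would prove integrability of $\dot p$, which gives existence of a limit $q$ and the rate, and then identify $q$ via rigidity. First shrink to a compact neighborhood $\overline{W}\subset V$ of $(p^*,m^*)$ on which $\|\nu_{(p,m)}\|\le L$ for some constant $L$; this is possible because $\nu$ is smooth. While the trajectory stays in $W$, \eqref{eq:pm-dynamics-a} and \eqref{edge_expconv} give
\[
\|\dot p(t)\|\le L\|m^*-m(t)\|\le LCe^{-ct}\|m^0-m^*\|.
\]
Integrating from $t$ to $\infty$ then gives a Cauchy estimate. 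Hence $p(t)\to q$ with $\|p(t)-q\|\le (LC/c)e^{-ct}\|m^0-m^*\|$, and $K=LC/c$ does not depend on the initial condition.

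The main obstacle is showing the trajectory never leaves $W$, so that the bound on $\nu$ stays valid for all time. I would use a bootstrap (continuation) argument. Choose $r>0$ so that $B(p^*,r)\times B(m^*,r)$, intersected with $\mathcal S$, lies in $W$. Because $F$ is continuous, take $U$ to be the set of $(p^0,F(p^0))$ with $\|p^0-p^*\|<r/2$ and $\|m^0-m^*\|<\delta$, where $\delta$ satisfies $(LC/c)\delta<r/2$ and $C\delta<r$.

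Let $T$ be the first exit time from $W$. On $[0,T)$ the estimate above gives $\|p(t)-p^0\|\le (LC/c)\|m^0-m^*\|<r/2$, so $\|p(t)-p^*\|<r$. Likewise \eqref{edge_expconv} gives $\|m(t)-m^*\|\le C\delta<r$. The trajectory is therefore strictly inside $W$ on $[0,T)$. Since $m=F(p)$ holds along solutions, the path also stays in $\mathcal S$, and the solution extends, so $T=\infty$. I would need to check that \eqref{edge_expconv} applies to these solutions; it does by hypothesis, since initial conditions in $U$ lie in $V$.

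Finally, I identify $q$. By continuity of $F$, $F(q)=\lim F(p(t))=\lim m(t)=m^*$. The point $q$ also lies within $r$ of $p^*$. By Assumption~\ref{asump.drigid-reg}, $(\mathcal G,p^*)$ is rigid, so there is a neighborhood of $p^*$ in which every configuration with the same edge lengths is congruent to $p^*$. Shrinking $r$ at the outset so that $B(p^*,r)$ lies inside this rigidity neighborhood makes $q$ congruent to $p^*$. Taking $r$ smaller than $U$'s prescribed $p$-neighborhood also puts $q$ in that neighborhood, which gives local exponential stability in the sense of Definition~\ref{def: locally exponentially stable}.
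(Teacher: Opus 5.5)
Your proof is correct. It has the same architecture as the paper's: bound $\|\nu\|\le L$ on a compact neighbourhood, keep the trajectory trapped there, integrate $\|\dot p\|$ to get a Cauchy estimate with $K=LC/c$, and identify the limit using local rigidity of $(\mathcal G,p^*)$.

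The one genuine difference is the trapping step. The paper packages this as Lemma~\ref{lem: trapping} and proves it as follows:
\begin{enumerate}[(i)]
\item choose $T_0$ with $C(2+N/c)e^{-cT_0}<1$;
\item use continuous dependence on initial data at the equilibrium to keep trajectories within $r$ of $x^*$ on $[0,T_0]$;
\item bootstrap only on the tail $[T_0,\infty)$.
\end{enumerate}
You bootstrap directly from $t=0$. The bounds $\|p(t)-p^*\|\le\|p^0-p^*\|+(LC/c)\|m^0-m^*\|$ and $\|m(t)-m^*\|\le C\|m^0-m^*\|$ hold uniformly on $[0,T)$ and stay strictly below $r$. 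Hence the trajectory cannot exit, and no finite-horizon continuous-dependence step is needed. You invoke \eqref{edge_expconv} only while the trajectory lies in $W\subseteq V$. So the same estimate also proves the lemma under its weaker hypothesis, which is the form used later in Theorem~\ref{thm: convergence certificates}. The paper's split into a transient phase and a tail phase is therefore a more roundabout way of organizing the same estimate; its only real advantage is presentational, isolating trapping as a reusable lemma.

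One small point about your final sentence. It is circular, because the $p$-constraint $\|p^0-p^*\|<r/2$ defining $U$ is itself built from $r$. Also, you only show that $q$ lies within $r$ of $p^*$, not within $r/2$. Definition~\ref{def: locally exponentially stable} literally asks that $q$ lie in the same neighbourhood as the initial positions. To get this, impose $\|p^0-p^*\|+(LC/c)\|F(p^0)-m^*\|<r/2$ (together with $C\|F(p^0)-m^*\|<r$) instead. Then $\|q-p^*\|\le\|p^0-p^*\|+(LC/c)\|m^0-m^*\|<r/2$ and $F(q)=m^*$. Moreover $q\in\mathcal C$, being congruent to the regular point $p^*$, so $q$ lies in the set. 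The paper's proof is equally silent on this.
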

Theorem \ref{thm:edge-to-node} shows that local exponential
convergence of the edge dynamics implies local exponential
convergence of the node dynamics to a configuration congruent to the
target. 

We will need the following technical ``trapping'' lemma to
bootstrap the main results.
\begin{lemma}\label{lem: trapping}
  Let Assumption~\ref{asump.drigid-reg} hold, and let
  $(\nu,\eta)$ be a compatible formation controller defined on an
  open neighborhood  $V\subseteq\mathcal S$ of $(p^*,m^*)$, where
  $\nu$ is smooth. Suppose further there exist constants
  $C,c>0$ such that every solution of
  \eqref{eq:pm-dynamics} with initial condition
  $(p^0,m^0)\in V$ satisfies \eqref{edge_expconv}
  for all times $t \ge 0$ such that the
  trajectory remains in $V$. Then there are neighborhoods
  $U\subseteq W\subseteq V$ of $(p^*,m^*)$ such that, for every solution of
  \eqref{eq:pm-dynamics} with initial condition
  $(p^0,m^0)\in U$, $(p(t),m(t))$ remains in $W$ for all times $t\ge 0$.
\end{lemma}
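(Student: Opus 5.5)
The plan is to control the node displacement by integrating the node velocity and using the exponential edge bound. The central estimate is
\[
\|\dot p\| = \|\nu_{(p,m)}(m^*-m)\| \le L\|m-m^*\|.
\]
Since $V$ is open, I will first fix a compact neighborhood $W_0\subseteq V$ of $(p^*,m^*)$. Smoothness of $\nu$ makes its operator norm bounded on $W_0$, and we write $L:=\sup_{W_0}\|\nu_{(p,m)}\|$. I will then take $W$ to be a small open ball, intersected with $\mathcal S$, around $(p^*,m^*)$ with closure inside $W_0$. Concretely, $W=\{(p,m)\in\mathcal S:\|p-p^*\|<\rho,\ \|m-m^*\|<\rho\}$, with $\rho$ small enough that $\overline W\subseteq W_0$.

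Next I choose $U$. Let $U=\{(p,m)\in\mathcal S:\|p-p^*\|<\delta,\ \|m-m^*\|<\delta\}$ with $\delta<\rho$, to be fixed below. Consider a solution started in $U$, and let $\tau$ be the first exit time from $W$, assuming it is finite. On $[0,\tau)$ the trajectory lies in $W\subseteq V$, so the hypothesis gives \eqref{edge_expconv} on this interval. Integrating the velocity bound,
\[
\|p(t)-p^*\|\le \|p^0-p^*\|+\int_0^t L C e^{-cs}\|m^0-m^*\|\,ds \le \delta+\frac{LC}{c}\delta .
\]
We also have $\|m(t)-m^*\|\le C\delta$. I will choose $\delta$ so that $\delta(1+LC/c)<\rho/2$ and $C\delta<\rho/2$. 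By continuity, both bounds persist up to $t=\tau$. So at time $\tau$ the state lies in $\{\|p-p^*\|\le\rho/2,\ \|m-m^*\|\le \rho/2\}$, which is strictly inside $W$. This contradicts the definition of $\tau$ as an exit time, so $\tau=\infty$.

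One point needs care. The exit argument is really a continuation argument, so I must check that the solution exists as long as it stays in $W$. Since $\overline W$ is compact in $V$ and the vector field is smooth, hence Lipschitz on $\overline W$, standard ODE theory says a maximal solution confined to $\overline W$ exists for all time. A second subtlety is that $\mathcal S$ is an embedded submanifold, so compactness must be taken in $\mathcal S$. The ball closures intersected with $\mathcal S$ are closed in $\mathcal S$, and near the regular point $(p^*,m^*)$ I can shrink $\rho$ so that they are compact and lie in $V$.

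The main obstacle is this bookkeeping: making sure that the exponential bound may be invoked on $[0,\tau)$, which is exactly how the hypothesis is phrased, and that the closure of $W$ sits in the compact set on which $L$ is defined. Once these are in place, the estimate itself is routine.
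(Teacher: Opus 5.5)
Your argument is correct, and it takes a more direct route than the paper's proof.

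The paper's version runs as follows. It fixes a compact ball $\overline W$ of radius $2r$ inside $V$ and bounds $\|\nu\|\le N$ there. It then splits time at a $T_0$ chosen so that $C(2+N/c)e^{-cT_0}<1$. On $[0,T_0]$ it uses continuous dependence of solutions on initial data near the equilibrium to find $\rho<r$ that keeps trajectories within distance $r$ of $(p^*,m^*)$. For $t\ge T_0$ it bounds the displacement $\|x(t)-x(T_0)\|$ by $C(2+N/c)e^{-cT_0}\rho<\rho$, so the trajectory stays within $r+\rho<2r$. The escape-time argument then finishes exactly as yours does.

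You instead apply the exponential edge bound and integrate the velocity starting at $t=0$. This is legitimate, since the hypothesis holds on the whole interval during which the trajectory stays in $V$. The total node displacement is then at most $(LC/c)\|m^0-m^*\|$, and you absorb the constants $C$ and $1+LC/c$ by shrinking the initial radius $\delta$. This avoids continuous dependence on a finite time window altogether and gives an explicit $\delta$ in terms of $\rho,C,L,c$. The paper's split at $T_0$ buys nothing essential here; it would only matter if the decay estimate were available after a transient rather than from time zero. Your version is a genuine simplification.
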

The proof is given in the Appendix.
\begin{proof}[Proof of Theorem \ref{thm:edge-to-node}]
  Let us fix a target $(p^*,m^*)\in \mathcal{S}$.  The goal is to
  find a neighborhood $U\subseteq \mathcal{S}$ of $(p^*,m^*)$ so that
  there is a constant $K > 0$ such that, for all initial conditions
  $(p^0,m^0)\in U$, there is  a configuration $q$
  congruent to $p^*$, such that
  $\|q - p(t)\| \le K{\rm e}^{-ct}$.

  It is assumed there are constants $C,c > 0$ and a neighborhood
  $ V\subseteq \mathcal{S}$
  of $(p^*,m^*)$ so that, for all initial conditions $(p^0,m^0)\in
  V$,
  $\| m(t)-m^*\| \le C{\rm e}^{-ct}\| m^0-m^*\| $.
  Because $\mathcal{G}$ is
  $d$-rigid and $p^*$ is a regular point of $F$,
  we can find a neighborhood $U_0$ of $p^*$ in $\mathcal{C}$
  such  that,
  if $q\in U_0$ and $F(q) = m^*$, then $q$ is congruent
  to $p^*$.  Let us now consider the full dynamics in the
  neighborhood $V_0 = (U_0 \times \mathcal{Q}) \cap V$ of the
  target $(p^*,m^*)$.

  By Lemma \ref{lem: trapping}, we can find neighborhoods $U\subseteq
  W\subseteq V_0$,
  such that, for all initial conditions in $U$, the trajectory
  remains in $W$ for all
  time.  Consider a sequence $t_i \to \infty$, and let $j>i$. Since the
  trajectory remains in $W$, we have
  \[
    \begin{aligned}
      \|p(t_j)-p(t_i)\|
      &\le \int_{t_i}^{t_j}\!\!\|\dot p(t)\|\,dt\le
      \int_{t_i}^{\infty}\!\! C'e^{-ct}\|m^0-m^*\|\,dt \\
      &\le \frac{C'}{c}e^{-ct_i}\|m^0-m^*\|,
    \end{aligned}
  \]
  where $C'=(\sup_W \|\nu\|)C$ (we have used that $\nu$ is smooth and
  defined on the closure of $W$).  Hence, the sequence $p(t_i)$
  is Cauchy, and has a limit $q$. A similar estimate gives
  \[
    \|q - p(t)\| \le {\frac{C'}{c}}e^{-ct}\|m^0-m^*\|=Ke^{-ct}\|m^0-m^*\|,
  \]
  so the convergence is exponential.  Since the whole state
  converges to $(q,m^*)\in \mathcal{S}$, we have $F(q) = m^*$ and
  $q\in U_0$, so $q$ is congruent to $p^*$.
\end{proof}
From Theorem \ref{thm:edge-to-node}, we get two certificates of
convergence for compatible controllers.
\begin{theorem}[Certificates of local exponential
  convergence]\label{thm: convergence certificates}
  Suppose that Assumption \ref{asump.drigid-reg} holds at a target $p^*$.
  Denote by $\eta^*$ the transformation $\eta_{(p^*,m^*)}$ and
  $T$ the tangent space $T_{m^*} \mathcal{Q}$.  Then, if either:
  \begin{enumerate}[(a)]
    \item the transformation $-\eta^*|_T$ is Hurwitz (i.e.~all eigenvalues
      of  $\eta^*|_T$ have positive real part);
    \item the quadratic form $q_{\eta^*}(v) = \langle
      v,\eta^*(v)\rangle$ is positive
      definite on $T$,
  \end{enumerate}
  then the controller is locally exponentially stable at $p^*$.  Moreover,
  (a) is necessary for local exponential stability.
\end{theorem}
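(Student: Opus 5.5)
The plan is to reduce everything to the edge dynamics \eqref{eq:pm-dynamics-b} on the manifold $\mathcal{Q}$ near $m^*$, and then invoke Theorem~\ref{thm:edge-to-node} (more precisely, Lemma~\ref{lem: trapping} together with the argument of that proof, since the linearization only gives exponential decay locally).

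\textbf{Reduction to a closed edge system.} Near $(p^*,m^*)$ the edge flow alone does not obviously define a closed system, because $\eta$ depends on $p$ as well as $m$. I would handle this by working on $\mathcal{S}$ itself. Since $p^*$ is regular, $\mathcal{S}$ is locally the graph of $F$ over $\mathcal{C}$, and the equilibrium set near $(p^*,m^*)$ is $\{(q,m^*): F(q)=m^*\}$. By rigidity, this set is locally the orbit of $p^*$ under congruences.

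Choose a local slice $\Sigma\subset\mathcal{C}$ through $p^*$, transverse to the congruence orbit, of dimension $\dim T$. Then $F|_\Sigma$ is a local diffeomorphism onto a neighborhood of $m^*$ in $\mathcal{Q}$, with differential $2R(p^*)|_{T_{p^*}\Sigma}:T_{p^*}\Sigma\to T$. The dynamics are equivariant under congruences, provided $\eta$ is. That equivariance is a point to check; if it fails, I would instead linearize the full system on $\mathcal{S}$ and use center-manifold reasoning along the equilibrium manifold.

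With equivariance, the edge dynamics are well defined as an ODE $\dot m = \eta_{(\sigma(m),m)}(m^*-m)$ on $\mathcal{Q}$, where $\sigma=(F|_\Sigma)^{-1}$. Its linearization at the equilibrium $m^*$ is $\delta\dot m = -\eta^*\delta m$ for $\delta m\in T$. The terms from differentiating $\eta$ vanish because they multiply $m^*-m=0$. Compatibility gives $\eta^*=2R(p^*)\nu^*$, so $\IM\eta^*\subseteq T$ and the restriction $\eta^*|_T$ makes sense.

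\textbf{Sufficiency.} Under (a), $-\eta^*|_T$ is Hurwitz, so by Lyapunov's indirect method (or the standard Lyapunov equation construction on $T$) there are $C,c>0$ and a neighborhood $V$ on which \eqref{edge_expconv} holds while trajectories stay in $V$. Lemma~\ref{lem: trapping} then supplies an invariant neighborhood, and the proof of Theorem~\ref{thm:edge-to-node} gives local exponential stability.

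For (b), positive definiteness of $q_{\eta^*}$ on $T$ means that the symmetric part of $\eta^*|_T$ is positive definite. Hence every eigenvalue $\lambda$ with unit eigenvector $v$ satisfies $\operatorname{Re}\lambda=\operatorname{Re}\langle v,\eta^* v\rangle>0$, so (b) implies (a). Alternatively, $V(m)=\|m-m^*\|^2$ serves directly as a Lyapunov function, since $\dot V = -2q_{\eta^*}(m-m^*)+O(\|m-m^*\|^3)$.

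\textbf{Necessity.} Suppose the controller is locally exponentially stable at $p^*$. Then the node trajectories converge exponentially, so $m=F(p)$ converges exponentially to $m^*$ for every start near $m^*$, because $F$ restricted to $\Sigma$ is open onto a neighborhood of $m^*$. An equilibrium of a $C^1$ vector field that attracts all nearby points exponentially must have a linearization with no eigenvalue of nonnegative real part on $T$:
\begin{itemize}
\item an eigenvalue with positive real part gives an unstable manifold, contradicting attraction;
\item an eigenvalue on the imaginary axis contradicts the exponential rate, by the standard converse argument (the linearized flow would then have a nondecaying mode, while exponential convergence of the nonlinear flow forces the derivative of the flow map to decay).
\end{itemize}
The main obstacle is this necessity direction, specifically the careful reduction to a closed edge ODE via the slice and the equivariance issue. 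Definition~\ref{def: locally exponentially stable} gives an exponential rate with an unspecified constant, and this must be translated into exponential decay of the linearized edge flow, most cleanly by differentiating the flow map with respect to the initial condition.
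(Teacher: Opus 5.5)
\textbf{There is a genuine gap: the argument only covers controllers whose edge dynamics close on $\mathcal{Q}$, which the theorem does not assume.}

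Your proof runs through a closed ODE $\dot m=\eta_{(\sigma(m),m)}(m^*-m)$ on $\mathcal{Q}$. That ODE exists only if $\eta_{(p,m)}$ is constant along the congruence orbit in the fiber over $m$. This is not a hypothesis of the theorem. Definition~\ref{def.compatform} asks only for smoothness and $\eta=2R(p)\nu$. For example, $\nu_{(p,m)}=(1+\|p_1\|^2)R(p)^\top$ with $\eta_{(p,m)}=2(1+\|p_1\|^2)R(p)R(p)^\top$ is compatible but not translation invariant, so $\dot m$ is not a function of $m$ alone.

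Without that reduction, both halves of your argument lose their object:
\begin{itemize}
\item In the sufficiency direction, Lyapunov's indirect method is applied to a vector field on $\mathcal{Q}$ that does not exist.
\item In the necessity direction, the full system on $\mathcal{S}$ has $(p^*,m^*)$ sitting on a manifold of equilibria. That equilibrium does not attract nearby points, so the unstable-manifold and center-direction argument cannot be run.
\end{itemize}
Your fallback, center-manifold analysis along the equilibrium manifold, is only named. For sufficiency it could plausibly be made to work via normal hyperbolicity, but necessity would still need a real argument.

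\textbf{The missing idea is that no closed edge system is needed.} The edge error carries enough structure on its own.

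For sufficiency, take $W(m)=(m^*-m)^\top\tilde P(m^*-m)$, where $\tilde P$ solves the Lyapunov inequality for $\eta^*|_T$ on $T$ and equals $I$ on $T^\perp$. Then $\dot W=-2q_{\tilde P\eta_{(p,m)}}(m^*-m)$. Bound this below by $\alpha\|m^*-m\|^2$ uniformly over all $(p,m)$ in a neighborhood in $\mathcal{S}$, using two facts:
\begin{itemize}
\item joint continuity of $\eta$ in $(p,m)$, with no invariance needed;
\item the normalized secant $(m^*-m)/\|m^*-m\|$ becomes tangent to $\mathcal{Q}$ as $m\to m^*$. This is needed because positivity holds only on $T$, and $m^*-m\notin T$.
\end{itemize}
This gives the edge estimate for as long as the trajectory stays in the neighborhood. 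Lemma~\ref{lem: trapping} and Theorem~\ref{thm:edge-to-node} then finish. Your remark that $\|m-m^*\|^2$ works as a Lyapunov function is essentially this with $\tilde P=I$. To make it valid, replace your $O(\|m-m^*\|^3)$ by an $o(\|m-m^*\|^2)$ bound that is uniform in $(p,m)$, and handle the secant issue explicitly.

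For necessity, use initial conditions $(\sigma(s(t)),s(t))$, where $\sigma$ is a local section of $F$ and $\dot s(0)=-v$. Differentiate $\dot e=-\eta_{(p,m)}e$ with respect to $t$ at $t=0$. The base trajectory is the equilibrium, where $e\equiv 0$, so every term from differentiating $\eta$, including its $p$-dependence, vanishes. Hence $z=\partial_t e$ satisfies the closed equation $\dot z=-\eta^*z$ with $z(0)=v$. The exponential edge-error bound then transfers to $\|\exp(-\tau\,\eta^*|_T)\|\le Ce^{-c\tau}$, which gives the Hurwitz property. You already noticed that the $\eta$-derivative terms drop out, but only inside the reduced system. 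That same observation is what makes the reduction unnecessary.
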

\begin{proof}
  Let us first deal with sufficiency.  Because it is easier, we first prove (b).
  Let $A$ be the matrix of $\eta^*|_T$ in any basis.  The signature of
  $q_{\eta^*}$
  on $T$ is equal to that of $\frac{1}{2}(A + A^\top)$, so $-(A +
  A^\top) \prec 0$.
  From the Lyapunov LMI criterion, $-\eta^*|_T$ is Hurwitz.  Hence
  (b) implies (a).

  For (a), we use a Lyapunov method.  Let $A$ be the matrix of
  $\eta^*|_T$ in any basis of $T$.  Since $-A$ is
  Hurwitz, there exists a symmetric $P \succ 0$ such that
  $\frac{1}{2}(PA + A^\top P)\succ 0$. We can now define a
  positive-definite operator $\tilde P$ on $\mathbb{R}^{|\E|\times
  |\E|}$ by setting $\tilde P = P$ on $T$,
  and $\tilde P = I$ on $T^\perp$.  Consider the Lyapunov function
  $$W(m) = (m^*-m)^\top\tilde P (m^*-m).$$
  At $(p^*,m^*)$, the Lyapunov inequality implies that for some $c>0$
  we have the coercive estimate,
  $$v^\top\left(\tfrac{1}{2}(PA+A^\top P)\right)v \geq c \|v\|^2, \; v\in T.$$
  The main technical step is to extend this estimate from tangent
  vectors at $m^*$ to the nonlinear
  error $m^*-m$ for nearby states.  In particular, we show there exists
  a neighborhood $V$ of $(p^*,m^*)$
  such that, for $(p,m)\in V$,
  \begin{equation}\label{eq: coerce}
    q_{(p,m)}(m^* - m)\ge \alpha \|m^* - m\|^2,
  \end{equation}
  where $\alpha > 0$ is a constant independent of the
  state and $q_{(p,m)}:=q_{\widetilde P\eta_{(p,m)}}$ is the quadratic form 
  $$q_{(p,m)}(x)=x^\top\left(\tfrac{1}{2}(\tilde P\eta_{(p,m)}+\eta^\top_{(p,m)}\tilde P)\right)x.$$
  This implies that along trajectories in $V$,
  $$\dot W(m) = -2q_{(p,m)}(m^*-m) \leq -2\alpha \|m^*-m\|^2.$$
  Since $\tilde P\succ0$, $W(m)\leq \lambda_{\rm max}(\tilde
  P)\|m^*-m\|^2$, it follows that
  \begin{equation}\label{Wdot}
    \dot W(m)\leq -\tfrac{2\alpha}{\lambda_{\rm max}(\tilde P)} W(m).
  \end{equation}
  By Grönwall's inequality \cite[Lemma A.1]{khalil},
  $$W(m)\leq W(m^0)\mathrm{exp}\left(-\tfrac{2\alpha}{\lambda_{\rm
  max}(\tilde P)}t\right).$$
  Since $W(m)\geq \lambda_{\rm min}(\tilde P) \|m^*-m\|^2$, we obtain an
  estimate
  \[
    \|m^* - m\| \le Ce^{-ct}\|m^* - m^0\|, \quad 0\le t\le t^*,
  \]
  for trajectories initialized at $(p^0,m^0)\in V$.  Here $t^*$ is the
  supremal time that the trajectory remains in $V$.  Lemma
  \ref{lem: trapping} now implies that there is a neighborhood $U\subseteq V$
  of $(p^*,m^*)$ so that trajectories initialized at $(p^0,m^0)\in U$
  remain in $V$ for all time.  Local exponential convergence then follows
  from Theorem \ref{thm:edge-to-node}.

  What remains is to construct $V$ on which \eqref{eq: coerce} holds.
  First, since $q_{(p^*,m^*)}$ is positive definite, there is a $\beta
  > 0$ such that
  $q_{(p^*,m^*)}(v) > \beta \|v\|^2$ for all vectors $v\in T$.
  For $m\neq m^*$ in $\mathcal Q$, define the normalized secant
  \[
    e(m):=\frac{m^*-m}{\|m^*-m\|},
  \]
  and decompose $e(m)=v(m)+w(m)$ with $v(m)\in T$, and  $w(m)\in T^\perp$.
  Since $\mathcal Q$ is a smooth embedded manifold, as $m\to m^*$
  in $\mathcal Q$,
  it follows that $\|w(m)\|\rightarrow 0$ and $\|v(m)\|\rightarrow
  1$, uniformly.

  Since compatibility gives $\IM\eta^*\subseteq T$, and
  $\widetilde P$ preserves $T$ and $T^\perp$, we have $w^\top\widetilde
  P\eta^*x=0$ for every $w\in T^\perp$, and  $x\in\mathbb R^{|\E|}$.
  Therefore, writing $e=v+w$ as above,
  \[
    q_{(p^*,m^*)}(e)=q_{(p^*,m^*)}(v)+v^\top\widetilde P\eta^*w.
  \]
  Using the positive-definiteness estimate on $T$,
  \[
    q_{(p^*,m^*)}(e)
    \ge
    \beta\|v\|^2
    -
    \|\widetilde P\eta^*\|\,\|v\|\,\|w\|.
  \]
  Since $\|v\|\to1$ and $\|w\|\to0$ as $m\to m^*$ uniformly,
  there exists a neighborhood $U$ of $m^*$ in $\mathcal Q$
  such that
  \[
    q_{(p^*,m^*)}(e(m))\ge \frac{\beta}{2},
    \qquad
    m\in U,\quad m\neq m^*.
  \]

  By smoothness of $\eta$, after shrinking to a neighborhood
  $V\subseteq \mathcal S$ of $(p^*,m^*)$ with $m\in U$, we may assume that
  \[
    \bigl\|
    \widetilde P\eta_{(p,m)}
    -
    \widetilde P\eta^*
    \bigr\|
    <\frac{\beta}{4}.
  \]
  Hence, for the unit vector $e=e(m)$,
  \[
    \begin{aligned}
      q_{(p,m)}(e)
      &=
      q_{(p^*,m^*)}(e)
      +
      \left\langle
      e,
      \widetilde P
      \bigl(\eta_{(p,m)}-\eta^*\bigr)e
      \right\rangle \\
      &\ge
      \frac{\beta}{2}
      -
      \bigl\|
      \widetilde P
      \bigl(\eta_{(p,m)}-\eta^*\bigr)
      \bigr\| \ge \frac{\beta}{4}.
    \end{aligned}
  \]
  Therefore,
  \[
    q_{(p,m)}(m^*-m)
    \ge
    \frac{\beta}{4}\|m^*-m\|^2,
  \]
  for every $(p,m)\in V$. Thus \eqref{eq: coerce} holds with
  $\alpha=\beta/4$, completing the construction of $V$ and showing that
  (a) is sufficient.

  We now prove necessity. Suppose that the controller is locally
  exponentially stable at $(p^*,m^*)$. We show that
  $-\eta^*|_T$ is Hurwitz. Since $m^*$ is a regular value of $F$, there
  exists a smooth local section $\sigma:U\subset\mathcal Q\to\mathcal
  C$, of $F$, defined on a neighborhood $V$ of $(p^*,m^*)$ with compact
  closure so that $F(\sigma(m))=m$ and
  $\sigma(m^*)=p^*$.
  Fix $v\in T$. By smoothness of $\mathcal Q$, there exists a
  smooth curve $s:(-\varepsilon,\varepsilon)\to\mathcal Q$ such that
  $s(0)=m^*$, with $\dot s(0)=-v$. For each sufficiently small $t$,
  consider the solution of the controller with initial condition
  $(p_t^0,m_t^0)=(\sigma(s(t)),s(t))$. Let $e(\tau,t):=m^*-m(\tau;t)$
  denote the corresponding edge error.
  By construction, $e(0,t)=tv+o(t)$. Since the vector field is smooth,
  the solution depends smoothly
  on its initial condition. Differentiating the edge-error dynamics
  with respect to $t$ at $t=0$ therefore gives the variational
  equation along the equilibrium $(p^*,m^*)$.

  The edge error satisfies
  \[
    \dot e=-\eta_{(p,m)}e.
  \]
  At $(p^*,m^*)$ we have $e=0$, so in the linearization all
  terms involving derivatives of $\eta_{(p,m)}$ are multiplied by
  the equilibrium error and vanish. Hence, defining
  \[
    z(\tau):=
    \left.\frac{\partial e(\tau,t)}{\partial t}\right|_{t=0},
  \]
  we obtain   $\dot z=-\eta^*z,
    \;
    z(0)=v.$
    
  Compatibility gives $\IM\eta^*\subseteq T$, and hence $T$ is
  invariant under $\eta^*$. Therefore
  \[
    z(\tau)
    =
    \exp\!\left(-\tau\,\eta^*|_T\right)v.
  \]
  Equivalently, for every fixed $\tau\ge0$,
  \[
    e(\tau,t)
    =
    t\,\exp\!\left(-\tau\,\eta^*|_T\right)v+o(t)
    \qquad
    \text{as }t\to0.
  \]

  Since the controller is locally exponentially stable at
  $(p^*,m^*)$, after shrinking the neighborhood if necessary there
  exist constants $K,c>0$ such that, for every sufficiently small
  initial perturbation, the corresponding node trajectory converges
  exponentially to a realization congruent to $p^*$. Since $F$ is
  locally Lipschitz, there exists $C>0$ such that the associated
  edge error satisfies
  \[
    \|e(\tau,t)\|
    \le
    C e^{-c\tau}\|e(0,t)\|,
    \qquad \tau\ge0.
  \]

  Substituting the expansions above gives, for each fixed
  $\tau\ge0$,
  \[
    \left\|
    t\,\exp\!\left(-\tau\eta^*|_T\right)v+o(t)
    \right\|
    \le
    C e^{-c\tau}\|tv+o(t)\|.
  \]
  Dividing by $|t|$ and letting $t\to 0$ yields
  \[
    \left\|
    \exp\!\left(-\tau\eta^*|_T\right)v
    \right\|
    \le
    C e^{-c\tau}\|v\|.
  \]
  Since $v\in T$ was arbitrary,
  \[
    \left\|
    \exp\!\left(-\tau\eta^*|_T\right)
    \right\|
    \le
    C e^{-c\tau},
    \qquad \tau\ge0.
  \]
  Thus the linear system $\dot z=-\eta^*|_T z$
  is exponentially stable. Equivalently, every eigenvalue of
  $\eta^*|_T$ has positive real part, and therefore
  $-\eta^*|_T$ is Hurwitz. This proves necessity.
\end{proof}

\subsection{A model compatible controller}
Theorem \ref{thm: convergence certificates} reduces local convergence
of a compatible formation controller to
properties of its induced edge dynamics.  To develop a geometric
understanding of this result, we now consider
how one should ideally drive the edge measurements to the target
assuming the edge state $m \in \mathcal Q$
could be controlled directly. To this end, consider the edge
measurement vector $m\in\mathcal Q$ as the
state of the first-order system
\begin{equation}\label{eq:edge-integrator}
  \dot m = v,
\end{equation}
where $v\in\R^{|\E|}$ is an edge-space control input. This system
should not be interpreted as an
independent physical model of the formation. Rather, it specifies the
desired evolution of the
edge measurements, which must subsequently be realized by compatible
node motions.
Define the \emph{edge potential} function,
\[
  V_e(m) = \tfrac12\|m - m^*\|^2,
\]
which measures the squared deviation of the current
edge vector $m$ from the desired one $m^*$.
As we are interested in feasible motions that must remain
on $\mathcal Q$, the appropriate notion of the gradient is the
\emph{Riemannian gradient} on $(\mathcal Q,g)$, obtained by
projecting $\nabla V_e(m)= m - m^*$ onto the tangent space of $\mathcal{Q}$
at $m$,
\[
  \nabla_{\!\mathcal Q}V_e(m) = \Pi(m)\,\nabla V_e(m)
  = \Pi(m)\,(m - m^*).
\]
The \emph{Riemannian gradient flow} associated with $V_e$ is defined by
\begin{equation}\label{eq:riem-flow}
  \dot m = -\,\nabla_{\!\mathcal Q}V_e(m)
  = -\,\Pi(m)\,(m - m^*).
\end{equation}
A solution to \eqref{eq:riem-flow} describes a trajectory $m(t)\in\mathcal
Q$ whose velocity $\dot m(t)$ is the projection of the negative
Euclidean gradient of $V_e$ onto the tangent space $T_{m(t)}\mathcal Q$;
i.e., $m(t)$ follows
the steepest descent of $V_e$ with respect to the Riemannian metric~$g$.
\begin{corollary}[Exponential stability of the edge gradient flow]
  \label{prop:edge-gradient-stability}
  Let $m^*\in\mathcal Q$. Then $m^*$ is a locally exponentially
  stable equilibrium of the Riemannian gradient flow
  \eqref{eq:riem-flow}. In particular, there exist a neighborhood
  $U_{\mathcal Q}\subseteq\mathcal Q$ of $m^*$ and constants
  $C,c>0$ such that every solution with $m(0)\in U_{\mathcal Q}$
  satisfies
  \[
    \|m(t)-m^*\|
    \leq
    Ce^{-ct}\|m(0)-m^*\|,
    \qquad t\geq0.
  \]
\end{corollary}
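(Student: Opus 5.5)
The plan is to treat \eqref{eq:riem-flow} as a pure edge-space flow and reuse the Lyapunov argument from the proof of Theorem~\ref{thm: convergence certificates}(b), with $\eta_{(p,m)}$ replaced by $\Pi(m)$ and $\tilde P=I$. The flow has the form $\dot m=\eta(m)(m^*-m)$ with $\eta(m):=\Pi(m)$. This operator depends smoothly on $m\in\mathcal Q$: near the regular value $m^*$ there is a smooth local section $\sigma$ of $F$, and $\Pi(m)=R(\sigma(m))R(\sigma(m))^\dagger$. Since $R$ has constant rank along $\mathcal C$, the pseudoinverse is smooth there. Because $\IM \Pi(m)=T_m\mathcal Q$, the vector field is tangent to $\mathcal Q$, so solutions starting in $\mathcal Q$ stay in $\mathcal Q$ for short time. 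At $m^*$ the quadratic form $q_{\Pi(m^*)}(v)=\langle v,\Pi(m^*)v\rangle=\|v\|^2$ is positive definite on $T=T_{m^*}\mathcal Q$, with $\beta=1$. This is exactly hypothesis (b).

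The main step is the coercive estimate \eqref{eq: coerce} at nearby $m$. I would copy the secant argument. Write $e(m)=(m^*-m)/\|m^*-m\|=v(m)+w(m)$ with $v(m)\in T$ and $w(m)\in T^\perp$. Because $\mathcal Q$ is an embedded submanifold, $\|w(m)\|\to0$ and $\|v(m)\|\to1$ as $m\to m^*$. Then
\[
\langle e,\Pi(m^*)e\rangle=\|v\|^2\ge \tfrac12
\]
near $m^*$. By continuity of $\Pi$, shrinking to a neighborhood $U_{\mathcal Q}$ with $\|\Pi(m)-\Pi(m^*)\|<1/4$ gives
\[
\langle m^*-m,\Pi(m)(m^*-m)\rangle\ge \tfrac14\|m^*-m\|^2 .
\]
One could also argue directly: $\langle x,\Pi(m)x\rangle=\|\Pi(m)x\|^2$, and $\Pi(m)x$ is close to $\Pi(m^*)x$, which is close to $x$ for secant vectors. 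Either route yields the estimate.

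With $W(m)=\tfrac12\|m-m^*\|^2$ we get $\dot W\le -\tfrac12 W$ while the trajectory stays in $U_{\mathcal Q}$. The trapping issue then disappears: $W$ is nonincreasing, so a trajectory that starts in a small ball $B(m^*,r)\cap\mathcal Q$ whose closure (within $\mathcal Q$) lies in $U_{\mathcal Q}$ never leaves that ball. It also cannot reach the boundary of $\mathcal Q$, because for small $r$ the set $\overline{B(m^*,r)}\cap\mathcal Q$ is compact inside the open regular locus, by local closedness of the embedded manifold near $m^*$. Hence solutions exist for all $t\ge0$. Grönwall's inequality then gives $\|m(t)-m^*\|\le e^{-t/4}\|m(0)-m^*\|$, so one can take $C=1$ and $c=1/4$.

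I expect the main obstacle to be verifying uniformity and compactness: that $\mathcal Q$ is locally closed near $m^*$ and that $\Pi$ is smooth there via a section. Both follow from regularity of $m^*$ and the implicit function theorem. Alternatively, the result follows immediately from Theorem~\ref{thm: convergence certificates}(b) applied to the model controller $\eta=\Pi$ with $\nu=R^\dagger\Pi/2$, together with the edge estimate obtained inside that proof.
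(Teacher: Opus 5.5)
Your proposal is correct and follows the paper's route. The paper observes that $\Pi(m^*)$ is the identity on $T_{m^*}\mathcal Q$, so the quadratic certificate (b) holds with $\tilde P=I$, and then invokes the coercivity and Gr\"onwall argument from the proof of Theorem~\ref{thm: convergence certificates}; your only deviation is a valid simplification, since with the state being $m$ alone, monotonicity of $W=\tfrac12\|m-m^*\|^2$ replaces the trapping lemma and gives explicit constants $C=1$, $c=1/4$.
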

This follows from the fact that $\Pi(m^*)$ is the identity on the
tangent space and the proof of Theorem \ref{thm: convergence certificates}
(the theorem itself deals with the combined dynamics). 

What was convenient about this case was that we have an intrinsic
description of the
edge dynamics that does not use any information about the node
state $p$.  This
means we can work on $\mathcal{Q}$ instead of $\mathcal{S}$.

To connect the squared edge length flow defined by the   Riemannian
gradient flow in \eqref{eq:riem-flow} to node dynamics, we make use of the
fact that $\mathcal{Q} = F(\mathcal{C})$.
The linear map ${\rm d}F_p =  2R(p)$ relates infinitesimal changes of the node positions $\dot p=u$
to infinitesimal changes of the squared edge lengths via
\[
  \dot m = {\rm d}F_p\, u = 2R(p)\,u.
\]
Hence, to produce a prescribed edge-space velocity $v^*\in
T_{F(p)}\mathcal Q$, we seek
a node-space velocity $u$ satisfying $${\rm d}F_p\,u=v^*.$$

Among the infinitely many $u$ that satisfy this relation,
a natural choice is the one with \emph{minimum Euclidean norm}:
\begin{align}\label{eq:node-opt}
  \min_{u\in\mathbb{R}^{dn}} \;\|u\|^2
  \quad\text{s.t.}\quad {\rm d}F_p\,u = v^*.
\end{align}
The unique minimizer of~\eqref{eq:node-opt} is obtained via the
Moore--Penrose pseudoinverse,
\[
  u^* = ({\rm d}F_p)^\dagger v^* = \tfrac{1}{2}\,R(p)^\dagger v^*.
\]
Consequently, the combined edge--node system can be viewed as a
two-level geometric control
construction.
Substituting $v^*=-\,\Pi(m)\,(m-m^*)$ yields the node-space dynamics
\begin{align}\label{model_controller_dynamics}
  \dot p = u^*
  = -\,\tfrac12\,R(p)^\dagger\,\Pi(m)\,(m-m^*).
\end{align}
The above construction gives us our first general compatible controller, with
\begin{subequations}\label{model_controller}
  \begin{empheq}[left=\empheqlbrace]{align}
    \nu &= \frac{1}{2}R(p)^\dagger, \label{model_controller_nu}\\
    \eta &= \Pi(m)=2R(p)\nu. \label{model_controller_eta}
  \end{empheq}
\end{subequations}
We refer to \eqref{model_controller}  as the \emph{model controller}.
This control,
while not distributed, and not intended for practical implementation,
provides a clear
geometric picture of what should be done to drive agents to a
desired formation.

\begin{example}[Classical distributed solution as a compatible controller]
We now show how the classical distributed solution to the formation control problem \cite{krickBrouckeFrancis} fits our framework.
%
The standard approach to derive this control is through the
negative-gradient flow of the potential function $V(p) =
\tfrac{1}{4}\|F(p)-m^*\|^2$, yielding
\begin{align}\label{grad_Control}
  u &= -\tfrac{\partial V(p)}{\partial p}=
  -R(p)^\top(R(p)p-m^*).
\end{align}
This control strategy can also be seen as a compatible formation
controller by setting
\begin{align}\label{distributed_model}
  \nu = R(p)^\top\text{ and } \eta = 2R(p)R(p)^\top.
\end{align}
These are  polynomial functions of the state $(p,m)$, and, hence,
are  rational over all of  $\mathcal{S}$. The column space of
$\eta$ is contained in the column space of $R(p)$, and
hence in $T_m \mathcal{Q}$.  By construction, we have
$2R(p)\nu = \eta$,   so the controller is well-defined.
The edge controller is no longer the Riemannian gradient:
$\eta$ is not a projection, nor is it constant over fibers
of $F$. However, the  matrix $2R(p)R(p)^\top$ is positive definite on
$T_m\mathcal Q$, which is sufficient for Theorem \ref{thm:
convergence certificates} to apply, yielding a new proof that the
controller is locally exponentially stable for every target $p^*$
such that $(p^*,F(p^*))\in \mathcal{S}$.
\end{example}
\section{Generic admissibility}\label{sec.admissibility}
The main result of Section \ref{sec.geoframework} showed that local
exponential convergence of a compatible formation controller 
can be certified through the edge dynamics.
The corresponding certificates, however, are pointwise conditions on
the operator $\eta^*|_{T_{m^*}\mathcal Q}$ at a particular target.
This motivates the study of
properties of $\eta$ that are \emph{intrinsic} to the controller,
rather than to a particular
target state $(p^*,m^*)$.  
We begin with a general
definition of a new notion we term \emph{admissibility} for
compatible controllers.
\begin{definition}[Admissibile compatible controller]\label{def: admissible}
  A compatible formation controller (Definition \ref{def.compatform}) defined
  by rational functions is called
  \begin{enumerate}[(a)]
    \item \emph{strongly admissible} if,
      for every generic
      target configuration $p^*\in \mathcal{C}$,
      $\eta^*|_{T_{m^*} \mathcal{Q}}$
      is hyperbolic (i.e.~all eigenvalues
      of $\eta^*|_{T_{m^*} \mathcal{Q}}$ have non-zero real part);
    \item \emph{admissible} if
      $\eta^*|_{T_{m^*} \mathcal{Q}}$ is invertible for every generic
      target $p^*$;
    \item \emph{weakly admissible} if the rank of $\eta^*$ is
      $dn - \binom{d+1}{2}$ for every generic target $p^*$.
  \end{enumerate}
\end{definition}
Pointwise, strong admissibility (all eigenvalues have non-zero real parts)
implies  admissibility (no eigenvalue is zero), implies weak
admissibility (since invertibility of $\eta^*|_{T_{m^*}\mathcal Q}$
requires $\eta^*$ to have full rank on $T_{m^*}\mathcal Q$).
Additionally, weak admissibility and admissibility agree if $\mathcal
G$ is minimally $d$-rigid.

\subsection{Genericity}
We now show how the admissibility conditions in Definition \ref{def:
admissible} can be verified generically.  In particular, weak
admissibility and admissibility can be certified at a single target,
while a Hurwitz condition at one target provides a certificate  for
strong admissibility. 
\begin{definition}[Genericity]\label{def: generic}
Let $P$ be a property of points in $\R^m$.  If there is a 
proper algebraic subset, $X\subseteq \R^m$, defined over $\mathbb{Q}$, 
such that either for all $y\notin X$, $P(y)$ holds or, 
$y\notin X$, $P(y)$ does not hold, then $P$ is called a 
\emph{generic property}.
\end{definition}
The motivation for this definition is that, as a proper algebraic set, 
$X$ is a closed, nowhere dense, subset of $\mathbb{\R}^m$.  Hence,
the points on which a generic property $P$ has atypical behavior are 
small in measure-theoretic, topological, or geometric terms.  

In what follows, we will often work with properties $P$ that 
are defined by the non-vanishing of a known collection of 
polynomials with rational coefficients. One example is 
that a structured matrix $A\in \R[{\bf t}]^{m\times n}$ 
with polynomial entries has rank at least $r$, where the polynomials 
are formal minors.  Another one is that a 
configuration lies in $\mathcal{S}$.  
In this setting, one can show that $P$ is automatically 
a generic property and that $P$ holds generically 
if it holds at a single point $y$.

\begin{theorem}[Admissibility is a generic property]\label{thm:
  admissible is generic}
  Fix a generically $d$-rigid graph $\G$ with $n$ vertices, a
  compatible formation controller, and a configuration $p^*\in
  \mathcal{C}$.  Denote
  by $m^*$ the edge lengths $F(p^*)$, so that $(p^*,m^*)\in \mathcal{S}$, by
  $\eta^*$ the linear operator $\eta_{(p^*,m^*)}$ and by $T$ the tangent
  space $T_{m^*} \mathcal{Q}$.  If either of the statements:
  \begin{enumerate}[(a)]
    \item $\eta^*$ has rank $N := dn - \binom{d+1}{2}$;
    \item $\eta^*|_T$ is invertible,
  \end{enumerate}
  holds at $p^*$ then it holds for every generic $p\in \mathcal{C}$.  Moreover,
  if $-\eta^*|_T$ is Hurwitz, then at every generic $p$, the
  transformation $\eta_{(p,F(p))}|_{T_{F(p)}\mathcal{Q}}$ is hyperbolic.
\end{theorem}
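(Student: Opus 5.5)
The plan is to express each condition as the non-vanishing of a polynomial in the configuration $p$, with rational coefficients, and then use the principle stated just before the theorem: such a property, if it holds at one point, holds off a proper algebraic subset defined over $\mathbb{Q}$, hence at every generic point. Since the controller is given by rational functions, we parametrize everything by $p$ alone, using $m=F(p)$. We write $\eta_p:=\eta_{(p,F(p))}$; after clearing denominators its entries are polynomials in $p$ over $\mathbb{Q}$, and generic $p$ avoid the denominators' zero sets. We also note that $\mathcal{C}$ is the complement of the locus where $\rk R(p)<N$, so it is Zariski-open and irreducible. Hence a polynomial that is nonzero at a single point of $\mathcal{C}$ is nonzero at every generic point.

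For (a), we use the compatibility condition from Definition~\ref{def.compatform}: $\IM \eta_p\subseteq \IM R(p)=T_{F(p)}\mathcal{Q}$, which has dimension $N$ on $\mathcal{C}$. So $\rk\eta_p\le N$ always, and $\rk\eta_p=N$ exactly when some $N\times N$ minor of $\eta_p$ is nonzero. The $N\times N$ minors are rational functions of $p$ over $\mathbb{Q}$. If one of them is nonzero at $p^*$, it is a nonzero rational function, so it is nonzero at every generic $p$. This proves (a).

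For (b), we observe that, because $\IM\eta_p\subseteq T$, the restriction $\eta_p|_T$ is invertible iff it is surjective onto $T$. That holds iff $\rk(\eta_p|_T)=N$, equivalently $\rk(\eta_p R(p))=N$, since $\IM R(p)=T$ for $p\in\mathcal{C}$. The minors of $\eta_pR(p)$ are again rational functions of $p$ over $\mathbb{Q}$, so the argument for (a) applies verbatim. An alternative that avoids a basis: $\eta_p|_T$ is invertible iff the coefficient of $\lambda^{|\E|-N}$ in $\det(\lambda I-\eta_p)$ is nonzero. This works because $\eta_p$ acts as $0$ on the quotient $\R^{|\E|}/T$, so the characteristic polynomial of $\eta_p$ factors as $\lambda^{|\E|-N}\chi_{\eta_p|_T}(\lambda)$.

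For the Hurwitz statement, we write $\chi_p(\lambda):=\chi_{\eta_p|_T}(\lambda)=\det(\lambda I-\eta_p)/\lambda^{|\E|-N}$, whose coefficients are rational in $p$ over $\mathbb{Q}$. $\eta_p|_T$ fails to be hyperbolic iff $\chi_p$ has a root on the imaginary axis. By a classical resultant criterion (e.g.\ Hermite--Biehler), this happens iff the polynomial $\Delta(p):=\operatorname{Res}_\lambda\bigl(\chi_p(\lambda),\chi_p(-\lambda)\bigr)$ vanishes, possibly after discarding a factor $\lambda$ whose vanishing corresponds to a zero eigenvalue. Indeed, $\Delta(p)=\prod_{i,j}(\lambda_i+\lambda_j)$, which vanishes iff two eigenvalues satisfy $\lambda_i=-\lambda_j$. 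This is the main obstacle: $\Delta$ also vanishes at points where $\eta_p|_T$ has a pair $\mu,-\mu$ off the imaginary axis, which is still hyperbolic. So the polynomial condition $\Delta\neq0$ is strictly stronger than hyperbolicity. Hurwitz at $p^*$ rescues us: all eigenvalues there have positive real part, so $\lambda_i+\lambda_j\neq0$ for all pairs and $\Delta(p^*)\neq0$. Hence $\Delta\neq0$ at every generic $p$, and then no eigenvalue is purely imaginary or zero. That is, $\eta_p|_T$ is hyperbolic at every generic $p$, though not necessarily Hurwitz, which is exactly what the theorem claims.
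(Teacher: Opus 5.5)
Your proof is correct and follows the paper's overall strategy. Each condition becomes the non-vanishing of a rational function of $p$ over $\mathbb{Q}$. Hyperbolicity is handled by the resultant of $\chi(\lambda)$ and $\chi(-\lambda)$, and the Hurwitz property at $p^*$ certifies that this resultant is not identically zero.

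The difference is in how you make the restricted operator $\eta_p|_T$ depend rationally on $p$, given that its domain $T=\IM R(p)$ moves with $p$. The paper does this abstractly:
\begin{itemize}
\item for (a), it passes to the induced map $\R^{|\E|}/\ker\eta^*\to T$ using a rationally chosen complement of $\ker\eta^*$;
\item for (b) and the Hurwitz claim, it first uses (b) to get $T=\IM\eta^*$, then invokes Lemma~\ref{lem: rational maps compression} (a rational basis of the image via Gaussian elimination over $\R(\mathbf{t})$), and only then applies Lemma~\ref{lem: rational maps generic properties}.
\end{itemize}
You instead stay with the full $|\E|\times|\E|$ matrix:
\begin{itemize}
\item $N\times N$ minors of $\eta_p$ and of $\eta_pR(p)$ give (a) and (b);
\item the factorization $\det(\lambda I-\eta_p)=\lambda^{|\E|-N}\chi_{\eta_p|_T}(\lambda)$, valid because $\eta_p$ induces zero on $\R^{|\E|}/T$, yields $\det(\eta_p|_T)$ and the resultant as rational functions of $p$ with no choice of basis.
\end{itemize}
Your route is more elementary and evaluates everything directly at the possibly non-generic base point $p^*$. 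You therefore never need to check that a rationally chosen basis specializes well at $p^*$. The paper's route, in exchange, produces general-purpose lemmas that it reuses later for invariant flags.

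Three small points do not affect the argument.
\begin{itemize}
\item Your first assertion, that non-hyperbolicity holds iff $\Delta(p)=0$, is false, as you yourself observe a few lines later. Only the implication non-hyperbolic $\Rightarrow\Delta=0$ holds, and it is the only direction you use. An eigenvalue $i\omega\neq0$ pairs with its conjugate $-i\omega$ because $\chi_p$ has real coefficients. A zero eigenvalue is already caught by the diagonal factor $2\lambda_i$ of $\prod_{i,j}(\lambda_i+\lambda_j)$, so no factor of $\Delta$ should be discarded.
\item The appeal to Hermite--Biehler is unnecessary; the product formula for the resultant is all you need.
\item You do not need $\mathcal{C}$ to be Zariski-open or irreducible. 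A rational function over $\mathbb{Q}$ that is nonzero at one point is nonzero off a proper algebraic subset of $\R^{dn}$ defined over $\mathbb{Q}$, which generic points avoid.
\end{itemize}
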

In the proof we will use some technical results about rational maps.
\begin{lemma}\label{lem: rational maps generic properties}
  Let $\alpha({\bf t}) : \R^m\to \R^m$ be a linear transformation that depends
  rationally on the variables ${\bf t} = (t_1, \ldots, t_m)$. Then
  \begin{enumerate}[(a)]
    \item if $\alpha({\bf t})$ is invertible for some ${\bf t}$, then it
      is invertible for generic ${\bf t}$;
    \item if, for some ${\bf t}$, no pair of eigenvalues of
      $\alpha({\bf t})$ sums to zero, then the same holds for
      generic ${\bf t}$.
  \end{enumerate}
\end{lemma}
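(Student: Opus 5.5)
Both parts reduce to the same principle: find a polynomial in ${\bf t}$ whose non-vanishing encodes the property. If that polynomial is nonzero at one point, it is not the zero polynomial. Its zero set is then a proper algebraic subset, and we need it to be defined over $\mathbb{Q}$. First we clear denominators. Write $\alpha({\bf t}) = A({\bf t})/h({\bf t})$, where $A$ has polynomial entries and $h$ is a single polynomial, for instance the product of the entry denominators. On the open dense set $\{h\neq 0\}$ the properties of $\alpha$ coincide with those of $A$, up to the nonzero scalar $h({\bf t})$. The coefficients of these polynomials come from the rational functions defining the controller, which are assumed to be defined over $\mathbb{Q}$ in line with the paper's convention on genericity. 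So every exceptional set below is a proper algebraic subset defined over $\mathbb{Q}$, and its complement contains all generic points in the sense of Definition~\ref{def: generic}.

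For (a), consider $\delta({\bf t}) := \det A({\bf t})$, a polynomial in ${\bf t}$. Invertibility of $\alpha({\bf t})$ at a point ${\bf t}_0$ with $h({\bf t}_0)\neq 0$ gives $\delta({\bf t}_0)\neq 0$, so $\delta$ is a nonzero polynomial. Hence $\alpha({\bf t})$ is invertible off the proper algebraic set $\{\delta h = 0\}$, and in particular at every generic ${\bf t}$.

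For (b), we need a polynomial in the entries of a matrix that vanishes exactly when two eigenvalues sum to zero. Take the characteristic polynomial $\chi_{\bf t}(\lambda) = \det(\lambda I - A({\bf t}))$, whose roots are $\lambda_1,\dots,\lambda_m$ (the eigenvalues of $A$). Also take $\chi_{\bf t}(-\lambda)$, whose roots are $-\lambda_1,\dots,-\lambda_m$. Form the resultant
\[
  \rho({\bf t}) := \operatorname{Res}_\lambda\bigl(\chi_{\bf t}(\lambda),\,\chi_{\bf t}(-\lambda)\bigr) = \pm\prod_{i,j}(\lambda_i+\lambda_j).
\]
This is a polynomial in the coefficients of $\chi_{\bf t}$, hence a polynomial in ${\bf t}$. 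Its vanishing is equivalent to $\lambda_i + \lambda_j = 0$ for some pair $i,j$, with $i=j$ allowed, which covers a zero eigenvalue.

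If "pair" in the statement is meant to exclude $i=j$, we instead use the polynomial $\prod_{i<j}(\lambda_i+\lambda_j)$. It is symmetric in the $\lambda_i$, so it is a polynomial in the coefficients of $\chi_{\bf t}$; concretely, it is the determinant of the induced action $\Lambda^2 A$ on the second exterior power. Either way, the same argument as in (a) applies. The hypothesis gives nonvanishing at some ${\bf t}_0$, so the polynomial is nonzero, and the property holds off a proper algebraic set.

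There is one subtlety: eigenvalues scale under $\alpha = A/h$. Since the condition $\lambda_i+\lambda_j=0$ is invariant under multiplying all eigenvalues by the nonzero scalar $1/h({\bf t})$, it is the same for $\alpha({\bf t})$ and for $A({\bf t})$. The main point requiring care is exactly this: making sure the eigenvalue condition is expressed as a polynomial identity in the entries, via resultants or symmetric functions, rather than through the eigenvalues themselves, which are not rational functions of ${\bf t}$.
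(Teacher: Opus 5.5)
Your proof is correct and follows the paper's route: non-vanishing of a determinant for (a), and of $\operatorname{Res}_\lambda\bigl(\chi(\lambda),\chi(-\lambda)\bigr)$ for (b), with the exceptional set cut out by a nonzero polynomial over $\mathbb{Q}$. The only procedural difference is that you clear denominators first, whereas the paper takes numerators and denominators of $\det\alpha$ and of the resultant afterwards. One inessential slip: $\prod_{i<j}(\lambda_i+\lambda_j)$ is the determinant of the additive compound $A\wedge I + I\wedge A$ on $\Lambda^2\mathbb{R}^m$, not of $\Lambda^2 A = A\wedge A$ (whose eigenvalues are the products $\lambda_i\lambda_j$), but your symmetric-function argument already covers that case.
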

We remark that we cannot strengthen the conclusion to say that
hyperbolicity of $\alpha$ is a generic property.  This is
why we use Hurwitz as the one-point certificate of
generic hyperbolicity.
\begin{lemma}\label{lem: rational maps compression}
  Let $\alpha({\bf t}) : \R^m\to \R^m$ be a linear transformation that depends
  rationally on the variables ${\bf t} = (t_1, \ldots, t_m)$. Then the
  restriction $\alpha({\bf t})|_{\IM \alpha({\bf t})}$ of $\alpha$ 
  to its image also depends rationally on ${\bf t}$.
\end{lemma}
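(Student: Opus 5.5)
The plan is to make ``depends rationally'' concrete: on a Zariski-dense open set of parameters, I will write down a basis of $\IM\alpha({\bf t})$ whose entries are rational in ${\bf t}$, and express the matrix of $\alpha({\bf t})|_{\IM\alpha({\bf t})}$ in that basis by an explicit rational formula. First note that $\IM\alpha({\bf t})$ is trivially $\alpha({\bf t})$-invariant, since $\alpha({\bf t})(\IM\alpha({\bf t}))\subseteq \IM\alpha({\bf t})$. So the restriction is a well-defined endomorphism $\IM\alpha({\bf t})\to\IM\alpha({\bf t})$, and only its rational dependence on ${\bf t}$ needs proof.

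\emph{Step 1 (generic rank).} Clear denominators, so that the entries of $\alpha({\bf t})$ are quotients of polynomials over $\mathbb{Q}$, and let $r$ be the maximal rank of $\alpha({\bf t})$ over its domain of definition. Some $r\times r$ minor is then a nonzero rational function. Fix one such minor, with row set $I$ and column set $J$, $|I|=|J|=r$. Off the proper algebraic set where this minor or a denominator vanishes, $\alpha({\bf t})$ has rank exactly $r$. Hence the columns of $\alpha({\bf t})$ indexed by $J$ are linearly independent, and since they lie in $\IM\alpha({\bf t})$, which has dimension $r$, they span it.

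\emph{Step 2 (rational basis).} Let $B({\bf t})\in\R^{m\times r}$ be the matrix formed by the columns in $J$; its entries are rational in ${\bf t}$. Because $B({\bf t})$ has full column rank generically, $B({\bf t})^\top B({\bf t})$ is invertible, and $\det(B^\top B)$ is a nonzero rational function, being nonzero wherever the chosen minor is. We could equally use the left inverse $(B_I)^{-1}$ built from the rows in $I$.

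\emph{Step 3 (matrix of the restriction).} Each column of $\alpha B$ lies in $\IM\alpha=\IM B$, so there is a unique $M({\bf t})\in\R^{r\times r}$ with $\alpha({\bf t})B({\bf t})=B({\bf t})M({\bf t})$. It is given by
\[
M({\bf t})=\bigl(B({\bf t})^\top B({\bf t})\bigr)^{-1}B({\bf t})^\top\alpha({\bf t})B({\bf t}),
\]
and by Cramer's rule its entries are rational in ${\bf t}$. $M({\bf t})$ is the matrix of $\alpha({\bf t})|_{\IM\alpha({\bf t})}$ in the basis $B({\bf t})$, which proves the lemma on the complement of a proper algebraic set defined over $\mathbb{Q}$.

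The main obstacle is interpretive rather than computational. The basis depends on the choice of $J$, and a fixed $J$ fails on the locus where its minor vanishes. I will handle this in two ways. First, any other admissible choice $J'$ yields a matrix $M'$ conjugate to $M$ by the rational change-of-basis matrix, so basis-invariant data, such as the characteristic polynomial and hence the spectrum, are rational functions of ${\bf t}$ independent of $J$. This is exactly what is needed to feed Lemma~\ref{lem: rational maps generic properties} in the proof of Theorem~\ref{thm: admissible is generic}. Second, near any point where $\alpha$ has rank $r$, some $r\times r$ minor is nonzero, so these rational charts cover the whole constant-rank locus.
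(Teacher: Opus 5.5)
Your proof is correct and is essentially the paper's argument. The paper uses Gaussian elimination over $\mathbb{R}({\bf t})$ to obtain a rational basis of $\IM\alpha({\bf t})$, hence a rational projection $\pi({\bf t})$ and a rational inclusion $\pi^*({\bf t})$, and then writes the restriction as $\pi({\bf t})\circ\alpha({\bf t})\circ\pi^*({\bf t})$. This is exactly your $M({\bf t})=(B^\top B)^{-1}B^\top\alpha B$, with $\pi^*=B$ and $\pi=(B^\top B)^{-1}B^\top$. Your version additionally makes explicit three things the paper leaves implicit: the choice of pivot columns through a nonvanishing minor, the exceptional algebraic set, and the independence of the characteristic polynomial from that choice.
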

The proofs of Lemma \ref{lem: rational maps generic properties} and
\ref{lem: rational maps compression} are given in the Appendix.

\begin{proof}[Proof of Theorem \ref{thm: admissible is generic}]
  Since $F$ is a polynomial map, the state
  $(p^*,m^*)=(p^*,F(p^*))$ depends polynomially on $p^*$.
  Since $\eta$ is rational in the state, it follows that $\eta^*$
  depends rationally on $p^*$. From compatibility, $\IM \eta^*\subseteq T$, and
  regularity of $m^*$ implies that $\dim T = N$.  Hence $\eta^*$ has rank $N$ if
  and only if the induced map $\mathbb{R}^{|\E|}/\ker \eta^*\to T$ is
  invertible.
  By Gaussian elimination, there is a complement $V$ to $\ker \eta^*$ such that
  the inclusion $V\hookrightarrow \mathbb{R}^{|\E|}$ is rational in the
  state.  This shows that the induced map is rational, and (a) follows from
  Lemma \ref{lem: rational maps generic properties}.

  Because rank cannot increase under restriction, statement (a)
  must hold whenever statement (b) does.  In that case, we have $T =
  \IM \eta^*$.
  Statement (b) then follows from Lemma \ref{lem: rational maps generic
  properties}
  applied to $\eta^*|_T$, which depends rationally on $p^*$ by
  Lemma \ref{lem: rational maps compression}.

  Generic hyperbolicity follows similarly, once we note that, if the
  real parts of every eigenvalue of $\eta^*|_T$ are positive, no
  pair of them can sum to zero.
\end{proof}

\subsection{Weak admissibility vs admissibility}
We can make a precise statement about the relationship
between weak admissibility and admissibility of a
compatible controller. In what follows, we will often use the
condition
\begin{equation}
  \label{eq: skew}
  \IM \alpha \cap \ker \alpha = \{0\}
\end{equation}
for a linear transformation $\alpha : \R^m\to \R^m$.  
\begin{theorem}[Weak admissibility upgrade]\label{thm: weak to strong}
  A weakly admissible compatible controller is admissible if and only if, at
  some generic target state $(p^*,m^*)\in \mathcal{S}$,
  $\eta^*$ satisfies \eqref{eq: skew}.
  Moreover, if, at $p^*$, \eqref{eq: skew} holds and the eigenvalues in
  the non-zero
  spectrum of $\eta^*$ have positive real parts,
  then $-\eta^*|_{T_{m^*} \mathcal{Q}}$ is
  Hurwitz, and the controller is strongly admissible.
\end{theorem}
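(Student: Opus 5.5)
The argument is pointwise linear algebra at a generic target, combined with Theorem \ref{thm: admissible is generic} to pass from one generic target to all of them. Fix a generic $(p^*,m^*)\in\mathcal{S}$ and write $T=T_{m^*}\mathcal{Q}$. Since $m^*$ is a regular value, $\dim T=N=dn-\binom{d+1}{2}$. Compatibility gives $\IM\eta^*\subseteq T$. Weak admissibility gives $\rk\eta^*=N$, so $\IM\eta^*=T$. The key observation is then
\[
\ker\bigl(\eta^*|_T\bigr)=T\cap\ker\eta^*=\IM\eta^*\cap\ker\eta^*.
\]
Hence $\eta^*|_T$ is invertible exactly when \eqref{eq: skew} holds at $p^*$.

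For the ``if'' direction, suppose \eqref{eq: skew} holds at some generic $p^*$. Then $\eta^*|_T$ is invertible at that single point. By Theorem \ref{thm: admissible is generic}(b), it is invertible at every generic $p$, so the controller is admissible. For the ``only if'' direction, admissibility gives invertibility of $\eta^*|_T$ at every generic target. By the displayed identity, \eqref{eq: skew} then holds at each of them, in particular at some generic one.

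For the ``moreover'' part, the plan is to compare the spectra of $\eta^*$ and $\eta^*|_T$.
\begin{enumerate}[(i)]
\item When \eqref{eq: skew} holds, rank--nullity gives $\dim\IM\eta^*+\dim\ker\eta^*=|\E|$. Together with \eqref{eq: skew} this yields $\R^{|\E|}=\IM\eta^*\oplus\ker\eta^*$, a decomposition into $\eta^*$-invariant subspaces.
\item With respect to this splitting, $\eta^*$ is block diagonal: $\eta^*|_{\IM\eta^*}=\eta^*|_T$ in one block and $0$ in the other.
\item By (i), $\eta^*|_T$ is invertible. So the nonzero spectrum of $\eta^*$ is exactly the spectrum of $\eta^*|_T$, with multiplicities.
\item The hypothesis that these eigenvalues have positive real part therefore says $-\eta^*|_T$ is Hurwitz at $p^*$.
\item The last clause of Theorem \ref{thm: admissible is generic} then gives hyperbolicity of $\eta|_{T_{F(p)}\mathcal{Q}}$ at every generic $p$. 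This is strong admissibility.
\end{enumerate}

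The genuinely delicate step is the direct-sum decomposition in (i). It is what lets the nonzero spectrum of the full operator $\eta^*$ be identified with the spectrum of its restriction to $T$. Without \eqref{eq: skew}, a nilpotent Jordan block straddling $\IM\eta^*$ and $\ker\eta^*$ would produce zero eigenvalues of $\eta^*|_T$ that are invisible in the ``non-zero spectrum'' of $\eta^*$. I will spell this point out carefully. The remaining steps are bookkeeping, provided the target used in the ``moreover'' clause is taken to be generic, so that the one-point certificates of Theorem \ref{thm: admissible is generic} apply.
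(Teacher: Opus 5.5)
Your proposal is correct and follows essentially the same route as the paper. Weak admissibility gives $\IM\eta^*=T$, so the kernel of $\eta^*|_T$ is exactly $\IM\eta^*\cap\ker\eta^*$, and Theorem~\ref{thm: admissible is generic} transfers the one-point conclusions to every generic target. Your decomposition $\R^{|\E|}=\IM\eta^*\oplus\ker\eta^*$ spells out the paper's brief remark that the two maps share their nonzero spectrum. Your explicit appeal to the Hurwitz clause of Theorem~\ref{thm: admissible is generic} supplies the strong-admissibility step that the paper leaves implicit.
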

\begin{proof}
  Denote by  $T$ the tangent space $T_{m^*} \mathcal{Q}$.
  By compatibility, $\IM \eta^* \subseteq T$, so weak admissibility
  corresponds to the
  statement that $\IM \eta = T$.
  Let $\pi$ be a linear projection $\R^{|\E|}\to T$ and $\pi^*$ the
  corresponding
  inclusion $\IM \eta = T\hookrightarrow \R^{|\E|}$.  Admissibility is
  the statement that
  $\pi\circ \eta\circ \pi^*$ is invertible.  By Theorem \ref{thm:
  admissible is generic}
  and genericity of $p^*$, we can check the statement pointwise at $p^*$.

  We will show that the controller is not admissible at $p^*$ if and only if
  \eqref{eq: skew} does not hold.  Suppose first that \eqref{eq: skew}
  does not hold.  By weak admissibility, $\IM \eta = T$, so there is a
  $v\in T$ such that $\eta^*(v) = 0$.  For this $v$, we have that
  $\pi^*(v) = v$, so
  $\pi\circ \eta\circ \pi^*(v) = 0$, certifying that the controller is
  not admissible at $p^*$.
  For the other direction, suppose the controller is not admissible at
  $p^*$.   Because the controller is not
  admissible, there is a non-zero $v\in T$ such that $\pi\circ
  \eta\circ \pi^*(v) = 0$. By weak
  admissibility $\IM \pi^* = \IM \eta$, so $w = \pi^*(v)\in \IM \eta$.
  By compatibility,
  $\pi\circ \eta(w) = \eta(w)$, so $w\in \ker \eta$; i.e.,
  \eqref{eq: skew} does not hold.

  In the case that \eqref{eq: skew} holds, because $\IM \eta = \IM
  \pi\circ \eta\circ \pi^*$,
  the two maps have the same non-zero eigenspaces.  Hence, if
  the eigenvalues in the non-zero spectrum of $\eta^*$
  have positive real parts, then $-\eta^*|_{T_{m^*} \mathcal{Q}}$ is Hurwitz.
\end{proof}
The condition \eqref{eq: skew} can be verified locally using a
flag of invariant subspaces.  To set up some notation, we let
$\alpha :\R^m\to \R^m$ be a linear operator and
$\{0\}  = V_0 \subseteq V_1 \subseteq \cdots \subseteq V_k = \R^m$
(we do not require the inclusions to be strict)
a flag of $\alpha$-invariant subspaces.  We define $W_j = V_j/V_{j-1}$
and denote by $\pi_j$ the quotient map and by $\alpha_j$ the induced
map $W_j\to W_j$, which is $\pi_j\circ \alpha\circ \pi^*_j$  (where the
inclusion $\pi^*_j$ is any linear section).  We say that the flag
$V$ satisfies the \emph{decoupling condition}
if $\IM \alpha \cap V_j = \alpha(V_j)$ for all $j$.
There are several equivalent formulations:
\begin{lemma}\label{lem: decoupling}
  Let $\alpha :\R^m\to \R^m$ be a linear operator and
  $V_i$ a flag of $\alpha$-invariant subspaces.  The
  following are equivalent:
  \begin{enumerate}[(a)]
    \item $V_i$ satisfies the decoupling condition;
    \item for all $j$, if $v\in V_j$ and $\alpha(v)\in V_{j-1}$,
      then $\alpha(v)\in \alpha(V_{j-1})$;
    \item for all $j$, $\rk \alpha|_{V_j} = \rk \alpha|_{V_{j-1}} +
      \rk \alpha_j$;
    \item $\rk \alpha = \rk \alpha_1 + \cdots + \rk \alpha_k$.
  \end{enumerate}
\end{lemma}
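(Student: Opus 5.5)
The plan is to prove the cycle (a)$\Leftrightarrow$(b) and (a)$\Leftrightarrow$(c)$\Leftrightarrow$(d). The workhorse is a rank count for each step of the flag. Since $V_{j-1}\subseteq V_j$ are both $\alpha$-invariant, the map $\alpha|_{V_j}$ induces $\alpha_j$ on $W_j=V_j/V_{j-1}$, and
\[
\IM \alpha_j = \pi_j(\alpha(V_j)) = \alpha(V_j)/\bigl(\alpha(V_j)\cap V_{j-1}\bigr).
\]
Invariance gives $\alpha(V_{j-1})\subseteq \alpha(V_j)\cap V_{j-1}$. Hence
\[
\rk\alpha|_{V_j} = \rk\alpha_j + \dim\bigl(\alpha(V_j)\cap V_{j-1}\bigr) \ge \rk\alpha_j + \rk\alpha|_{V_{j-1}},
\]
with equality if and only if $\alpha(V_j)\cap V_{j-1} = \alpha(V_{j-1})$. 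This identity is the key step, and I will state it first.

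\emph{(b)$\Leftrightarrow$(c).} Condition (b) at index $j$ says exactly that $\alpha(V_j)\cap V_{j-1}\subseteq\alpha(V_{j-1})$. The reverse inclusion is automatic, so (b) at $j$ is the equality case of the identity, which is (c) at $j$.

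\emph{(c)$\Leftrightarrow$(d).} Summing (c) over $j$ telescopes, using $V_0=0$ and $V_k=\R^m$, to give (d). Conversely, the inequality holds at every $j$, and the sum of the inequalities is (d). So equality in the sum forces equality at each $j$, which gives (c).

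\emph{(a)$\Leftrightarrow$(b).} Assume (a). If $v\in V_j$ and $\alpha(v)\in V_{j-1}$, then $\alpha(v)\in \IM\alpha\cap V_{j-1} = \alpha(V_{j-1})$, which is (b).

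For the converse, assume (b) and prove $\IM\alpha\cap V_j\subseteq \alpha(V_j)$ by downward induction on $j$. The base case $j=k$ holds because $V_k=\R^m$. For the inductive step, suppose the inclusion holds at $j$ and let $w\in\IM\alpha\cap V_{j-1}$. Then $w\in\IM\alpha\cap V_j=\alpha(V_j)$, so $w=\alpha(v)$ for some $v\in V_j$ with $\alpha(v)\in V_{j-1}$. By (b), $w\in\alpha(V_{j-1})$, which gives the inclusion at $j-1$. The inclusion $\alpha(V_j)\subseteq \IM\alpha\cap V_j$ holds by invariance, so (a) follows.

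The only delicate point is the identification of $\IM\alpha_j$ in the rank identity. I will verify that $\alpha_j$ is well defined and independent of the chosen section $\pi_j^*$ because $\alpha(V_{j-1})\subseteq V_{j-1}$. I will also note that non-strict inclusions cause no trouble, since then $W_j=0$ and $\rk\alpha_j=0$.
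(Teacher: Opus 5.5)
Your proposal is correct and follows the paper's proof essentially step for step. It uses the same rank-nullity identity $\rk\alpha|_{V_j} = \rk\alpha_j + \dim(\alpha(V_j)\cap V_{j-1})$, with equality exactly when (b) holds at index $j$, and the same telescoping argument for (c)$\Leftrightarrow$(d); your downward induction for (b)$\Rightarrow$(a) is just a rephrasing of the paper's choice of a preimage lying in $V_{j'}$ with $j'$ minimal.
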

The proof is in the appendix. The point of the decoupling condition is that it
is a local property that implies the more global  \eqref{eq: skew}.  We
note that the rank inequality $\rk \alpha \ge \rk \alpha_1 + \cdots +
\rk \alpha_k$
holds unconditionally for any invariant flag.
\begin{theorem}[Invariant flag certificate of admissibility]\label{thm: flag}
  Let $V_{j,(p,m)}$ be a flag of $\eta_{(p,m)}$-invariant subspaces
  that depends rationally on the state, and suppose that the controller
  is weakly admissible. If, at some generic target $p^*$, the flag
  $V_{j,(p^*,m^*)}$ satisfies the decoupling condition, then the controller is
  admissible  if and only if each induced
  map $\eta_j^*$ satisfies~\eqref{eq: skew}.
  If, in addition, the nonzero spectrum of $\eta^*$ have positive
  real parts, then $-\eta^*|_{T_{m^*}\mathcal Q}$ is Hurwitz and the controller
  is strongly admissible.
\end{theorem}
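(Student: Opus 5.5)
The plan is to reduce the statement to Theorem~\ref{thm: weak to strong}. Since the controller is weakly admissible, that theorem says it is admissible if and only if $\eta^*$ satisfies \eqref{eq: skew} at some generic target. It therefore suffices to prove a purely linear-algebraic claim: for an operator $\alpha:\R^m\to\R^m$ with an invariant flag $V_j$ satisfying the decoupling condition, $\alpha$ satisfies \eqref{eq: skew} if and only if every induced map $\alpha_j$ does. We will apply this with $\alpha=\eta^*$ and the flag $V_{j,(p^*,m^*)}$.

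For the linear-algebra claim I will use the standard reformulation of \eqref{eq: skew} in terms of the zero eigenvalue. Condition \eqref{eq: skew} holds if and only if the algebraic multiplicity of $0$ as an eigenvalue of $\alpha$ equals $\dim\ker\alpha = m-\rk\alpha$. Indeed, $\IM\alpha\cap\ker\alpha=\{0\}$ is equivalent to $\ker\alpha^2=\ker\alpha$, which says the zero Jordan blocks are all $1\times1$. In general the algebraic multiplicity is at least the geometric one, with equality exactly when \eqref{eq: skew} holds. The same statement applies to each $\alpha_j$ on $W_j$.

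Next, choose a basis adapted to the flag. In it $\alpha$ is block upper triangular with diagonal blocks $\alpha_1,\dots,\alpha_k$, so the characteristic polynomial factors. Hence the algebraic multiplicity of $0$ for $\alpha$ is the sum of the algebraic multiplicities for the $\alpha_j$. On the geometric side, the decoupling condition gives $\rk\alpha=\sum_j\rk\alpha_j$ by Lemma~\ref{lem: decoupling}(d). Since $m=\sum_j\dim W_j$, this yields $\dim\ker\alpha=\sum_j\dim\ker\alpha_j$. Writing $a_j\ge g_j$ for the algebraic and geometric multiplicities of $0$ for $\alpha_j$, we get $a(\alpha)-g(\alpha)=\sum_j(a_j-g_j)$, a sum of nonnegative terms. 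So $\alpha$ satisfies \eqref{eq: skew} if and only if $a_j=g_j$ for every $j$, that is, if and only if each $\alpha_j$ satisfies \eqref{eq: skew}. This argument gives both directions at once. The one point to get right is that decoupling is used only for the rank additivity. Without it, only the inequality $\rk\alpha\ge\sum_j\rk\alpha_j$ holds, and the equivalence breaks.

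For the final assertion, once \eqref{eq: skew} holds for $\eta^*$ (equivalently, for all $\eta^*_j$) and the nonzero spectrum of $\eta^*$ lies in the open right half-plane, the second part of Theorem~\ref{thm: weak to strong} gives directly that $-\eta^*|_{T_{m^*}\mathcal Q}$ is Hurwitz. Theorem~\ref{thm: admissible is generic} then upgrades this single-target Hurwitz condition to generic hyperbolicity, that is, strong admissibility. The rational dependence of the flag on the state is not essential for the pointwise argument. It only ensures that the decoupling condition, being a rank condition on rational data via Lemma~\ref{lem: decoupling}(c), is itself generic, so checking it at one generic $p^*$ is meaningful. I expect no serious obstacle; the main care point is the multiplicity bookkeeping in the previous paragraph.
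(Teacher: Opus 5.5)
Your proposal is correct. Its overall architecture matches the paper's:
\begin{itemize}
\item genericity (Theorem~\ref{thm: admissible is generic}, Theorem~\ref{thm: weak to strong}, Lemma~\ref{lem: flag}) lets everything be checked at $p^*$;
\item Theorem~\ref{thm: weak to strong} reduces admissibility to \eqref{eq: skew} for $\eta^*$;
\item the second half of that theorem, together with Theorem~\ref{thm: admissible is generic}, gives the Hurwitz and strong-admissibility conclusions.
\end{itemize}

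Where you differ is the key linear-algebra step, which the paper isolates as Lemma~\ref{lem: flag skew} and proves by chasing vectors. In one direction, if $0\neq v\in\IM\alpha\cap\ker\alpha$, it takes the minimal $j$ with $v\in V_j$. It then uses Lemma~\ref{lem: decoupling}(b) to exhibit the explicit witness $\pi_j(v)\in\IM\alpha_j\cap\ker\alpha_j$. In the other direction, it uses injectivity of $\alpha$ on $M=\IM\alpha$ to strengthen decoupling to $\alpha(M_j)=M_j$.

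You instead count. First, \eqref{eq: skew} holds exactly when the algebraic and geometric multiplicities of $0$ agree. Second, the algebraic multiplicity is additive because the characteristic polynomial factors along the flag. Third, the geometric multiplicity is additive precisely because of the rank identity in Lemma~\ref{lem: decoupling}(d). This handles both directions at once and is shorter. The paper's argument is more constructive: it locates a specific level of the flag and a specific vector at which \eqref{eq: skew} fails.

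Your counting also shows a bit more than you state. The unconditional inequality $\rk\alpha\ge\sum_j\rk\alpha_j$ already gives
\[
a(\alpha)-g(\alpha)\ \ge\ \sum_j (a_j-g_j)\ \ge\ 0 .
\]
So \eqref{eq: skew} for $\alpha$ forces it for every $\alpha_j$ with no decoupling hypothesis. Decoupling is needed only to pass from the $\alpha_j$ back to $\alpha$, so only that half of the equivalence breaks without it.
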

The proof uses two further technical lemmas that are proved in the appendix.
\begin{lemma}\label{lem: flag}
  Let $\alpha({\bf t}) : \R^m\to \R^m$ be a linear transformation that
  depends rationally on ${\bf t}$ and $V_i({\bf t})$ a flag of
  $\alpha({\bf t})$-invariant subspaces that depends rationally
  on ${\bf t}$.  Then,  the spectrum of $\alpha({\bf t})$ is the union
  of the spectra of
  the induced maps $\alpha({\bf t})_j$, and the following are generic
  properties:
  \begin{enumerate}[(a)]
    \item $V_i({\bf t})$ satisfies the decoupling condition;
    \item the induced maps $\alpha({\bf t})_j$ satisfy \eqref{eq: skew}.
  \end{enumerate}
\end{lemma}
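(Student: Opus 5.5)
The spectral statement is linear algebra at a fixed ${\bf t}$. For the genericity statements, I will express each property as the non-vanishing of polynomials in ${\bf t}$ with rational coefficients. The paper's convention is that such a property holds generically once it holds at one point.

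\emph{Spectrum.} Fix ${\bf t}$ and choose a basis adapted to the flag, i.e.\ extend a basis of $V_1$ to one of $V_2$, and so on. In this basis the matrix of $\alpha({\bf t})$ is block upper triangular, because each $V_j$ is invariant. The $j$-th diagonal block is the matrix of the induced map $\alpha({\bf t})_j$ on $W_j=V_j/V_{j-1}$. Hence the characteristic polynomial of $\alpha({\bf t})$ is the product of those of the $\alpha({\bf t})_j$, and the spectrum of $\alpha({\bf t})$ is the union of the spectra of the induced maps, with multiplicities.

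\emph{Rational dependence.} Let $d_j$ denote the generic dimension of $V_j({\bf t})$. Since the flag depends rationally on ${\bf t}$, Gaussian elimination (as in the proof of Theorem~\ref{thm: admissible is generic}) produces, on a Zariski-open set defined over $\mathbb{Q}$, bases of the $V_j({\bf t})$ that are rational in ${\bf t}$ and nested. Completing these to an adapted basis of $\R^m$ by fixed standard vectors gives a change of basis $B({\bf t})$ that is rational. Off the zero set of $\det B({\bf t})$ and of the denominators, the conjugate $B^{-1}\alpha B$ is block upper triangular and its diagonal blocks $A_j({\bf t})$ are rational. Thus each $\alpha({\bf t})_j$ depends rationally on ${\bf t}$.

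\emph{(a) and (b) are generic.} By Lemma~\ref{lem: decoupling}(d), decoupling is equivalent to $\rk\alpha({\bf t})=\sum_j\rk A_j({\bf t})$, and the inequality $\ge$ always holds. The plan is to show that each rank is generically constant, equal to its maximum. The condition that a rational matrix has rank at least $r$ is the non-vanishing of some $r\times r$ minor after clearing denominators. So on the complement of a proper algebraic set over $\mathbb{Q}$, every rank attains its generic value, and the equality $\rk\alpha=\sum\rk A_j$ either holds on that whole open set or fails on it. This is exactly a generic property in the sense of Definition~\ref{def: generic}.

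For (b), note that $\IM\beta\cap\ker\beta=\{0\}$ is equivalent to $\rk\beta^2=\rk\beta$. This is the main point to check: if $\rk\beta^2=\rk\beta$ then $\ker\beta\cap\IM\beta=\ker(\beta|_{\IM\beta})=0$, and the converse follows by the same rank count. Applying this to each rational $A_j({\bf t})$, property (b) becomes the condition $\rk A_j^2=\rk A_j$ for every $j$. Both ranks are generically constant, so the same argument shows this is a generic property.

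The main obstacle is making the rational adapted basis precise when the dimensions of the $V_j$ jump on a lower-dimensional set. I handle this by discarding that algebraic set from the outset, since genericity allows excluding any proper algebraic subset defined over $\mathbb{Q}$.
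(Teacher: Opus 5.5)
Your proof is correct and follows the paper's route. The spectral claim comes from the block upper-triangular form, decoupling reduces to a rank identity via Lemma~\ref{lem: decoupling} (you use (d) where the paper cites (c), which is equivalent), and (b) reduces to $\rk\beta^2=\rk\beta$, with generic constancy of ranks of rationally varying matrices finishing both; your explicit rational adapted basis just spells out what the paper gets by citing Lemmas~\ref{lem: rational maps generic properties} and~\ref{lem: rational maps compression}.
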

\begin{lemma}\label{lem: flag skew}
  Let $\alpha :\R^m\to \R^m$ be a linear operator and
  $V_i$ a flag of $\alpha$-invariant subspaces satisfying the
  decoupling condition.  Then \eqref{eq: skew} holds for
  $\alpha$ if and only if it holds for each induced map $\alpha_j$.
\end{lemma}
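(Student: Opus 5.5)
We prove Lemma \ref{lem: flag skew} by induction on the length $k$ of the flag. The main tool is the equivalent rank form of the decoupling condition from Lemma \ref{lem: decoupling}(c)--(d).

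First we record the elementary reformulation of \eqref{eq: skew}. For any operator $\beta$ on a finite-dimensional space $W$, condition \eqref{eq: skew} holds if and only if $\rk \beta^2 = \rk \beta$. Indeed, $\rk\beta^2 = \rk\beta - \dim(\IM\beta\cap\ker\beta)$. Equivalently, \eqref{eq: skew} says that $W = \IM\beta\oplus\ker\beta$, i.e. the eigenvalue $0$ is semisimple.

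The key observation is that the same flag $V_j$ is $\alpha^2$-invariant, and the maps it induces on the quotients are exactly $(\alpha_j)^2$. In general we only have the unconditional inequality $\rk\alpha^2 \ge \sum_j \rk (\alpha_j)^2$, noted after Lemma \ref{lem: decoupling}. Combining this with $\rk \alpha_j^2\le\rk\alpha_j$ and the decoupling identity $\rk\alpha=\sum_j\rk\alpha_j$ gives
\[
\rk\alpha^2 \;\ge\; \sum_j \rk\alpha_j^2 ,\qquad \rk\alpha^2\le \rk\alpha = \sum_j\rk\alpha_j .
\]

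\emph{Sufficiency.} Suppose each $\alpha_j$ satisfies \eqref{eq: skew}. Then $\rk\alpha_j^2=\rk\alpha_j$ for every $j$, so the two inequalities squeeze to $\rk\alpha^2=\rk\alpha$. Hence $\alpha$ satisfies \eqref{eq: skew}.

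\emph{Necessity.} Suppose $\alpha$ satisfies \eqref{eq: skew}. This direction is the main obstacle, because it requires $\sum_j\rk\alpha_j^2 = \rk\alpha^2$, i.e. that the flag also decouples $\alpha^2$. We argue by induction on $k$, reducing to a two-step flag $0\subseteq V_1\subseteq V$ with quotient $W=V/V_1$.

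\begin{enumerate}[(i)]
\item \emph{The restriction $\alpha|_{V_1}$ satisfies \eqref{eq: skew}.} Its image and kernel are contained in $\IM\alpha$ and $\ker\alpha$ respectively, so their intersection is trivial.
\item \emph{The quotient map satisfies \eqref{eq: skew}.} Let $\bar v\in\IM\alpha_2\cap\ker\alpha_2$, and write $\bar v=\overline{\alpha(u)}$ with $\alpha^2(u)\in V_1$. Since $\alpha^2(u)\in V_1\cap\IM\alpha$, decoupling gives $\alpha^2(u)=\alpha(x)$ for some $x\in V_1$. Then $\alpha(u)-x\in\ker\alpha$. Also, by decoupling applied to $V_1$, we have $\alpha(u)-x\in\IM\alpha$, since $x\in V_1$ can be taken in the image after adjusting by $\ker\alpha$. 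We then use \eqref{eq: skew} for $\alpha$, in the form $V=\IM\alpha\oplus\ker\alpha$ with $V_1=\alpha(V_1)\oplus(\ker\alpha\cap V_1)$, to choose $x\in\alpha(V_1)$. This forces $\alpha(u)=x\in V_1$, so $\bar v=0$.
\end{enumerate}

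The delicate point is step (ii): we must check that the splitting $V_1=\alpha(V_1)\oplus(\ker\alpha\cap V_1)$, which follows from (i), is compatible with the decoupling identity $\IM\alpha\cap V_1=\alpha(V_1)$. We expect this verification to be the part requiring the most care.

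Finally, the decoupling condition for a longer flag restricts to the subflag on $V_{k-1}$, and it passes to the quotient flag on $V/V_1$ by Lemma \ref{lem: decoupling}(c). So the two-step case suffices to carry out the induction on $k$.
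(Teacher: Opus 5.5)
Your proof is correct, but it is organized differently from the paper's.

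\textbf{The implication ``each $\alpha_j$ satisfies \eqref{eq: skew} $\Rightarrow$ $\alpha$ does''.} The paper argues by contrapositive with an element chase. Given a nonzero $v\in\IM\alpha\cap\ker\alpha$, it takes $j$ minimal with $v\in V_j$ and uses decoupling to write $v=\alpha(w)$ with $w\in V_j$. Then $\pi_j(v)$ is a nonzero element of $\IM\alpha_j\cap\ker\alpha_j$. Your rank squeeze applies the unconditional flag inequality to $\alpha^2$, whose induced maps are $\alpha_j^2$, and combines it with Lemma~\ref{lem: decoupling}(d). It is just as short, and it gives the quantitative bound $\dim(\IM\alpha\cap\ker\alpha)\le\sum_j\dim(\IM\alpha_j\cap\ker\alpha_j)$. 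The paper's chase instead pinpoints a specific level $j$ at which \eqref{eq: skew} fails.

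\textbf{The converse.} The paper works at every level at once. With $M_j=\IM\alpha\cap V_j$, it shows $\alpha(M_j)=M_j$ by injectivity of $\alpha$ on $\IM\alpha$. It then lifts $w\in\IM\alpha_j\cap\ker\alpha_j$ to some $v\in M_j$ and deduces $v\in M_{j-1}$. Your step (ii) is the same idea in the two-step case. However, the induction on $k$ is unnecessary, and so is the claim that decoupling passes to quotient flags, which you only assert. By (i), $\alpha|_{V_j}$ satisfies \eqref{eq: skew}. The flag $0\subseteq V_{j-1}\subseteq V_j$ is decoupled for $\alpha|_{V_j}$ because $\alpha(V_{j-1})\subseteq\alpha(V_j)\cap V_{j-1}\subseteq\IM\alpha\cap V_{j-1}=\alpha(V_{j-1})$. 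So (ii) applies directly at each $j$ and gives \eqref{eq: skew} for $\alpha_j$.

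\textbf{Smaller remarks.} The ``delicate point'' you flag in (ii) is not delicate. Once (i) gives $V_1=\alpha(V_1)\oplus(\ker\alpha\cap V_1)$, you replace $x$ by its $\alpha(V_1)$-component, which has the same image under $\alpha$ and lies in $\IM\alpha$. It is this splitting, not decoupling, that justifies $\alpha(u)-x\in\IM\alpha$, so that sentence should be reattributed. In fact, the direction ``$\alpha$ satisfies \eqref{eq: skew} $\Rightarrow$ each $\alpha_j$ does'' holds for any invariant flag, without decoupling. Condition \eqref{eq: skew} says that $\lambda^2$ does not divide the minimal polynomial of $\alpha$, and the minimal polynomial of each induced map on an invariant subquotient divides that of $\alpha$.
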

\begin{proof}[Proof of Theorem \ref{thm: flag}]
  By Theorems \ref{thm: admissible is generic} and \ref{thm: weak to strong}
  and Lemma \ref{lem: flag},  all the properties in the hypothesis
  are generic, and so may be checked pointwise at $p^*$.  The result now
  follows from  Lemma
  \ref{lem: flag skew} and then Theorem \ref{thm: weak to strong}.
\end{proof}
\begin{figure*}[!t]
  \centering
  \begin{subfigure}[c]{0.28\textwidth}
    \centering
\includegraphics[
  width=\linewidth
]{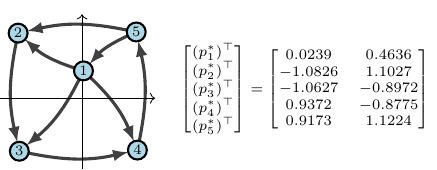}
    \caption{Directed target.}
    \label{fig:ex3_graph}
  \end{subfigure}
  \hfill
  \begin{subfigure}[c]{0.2\textwidth}
    \centering
    \includegraphics[width=\linewidth]
    {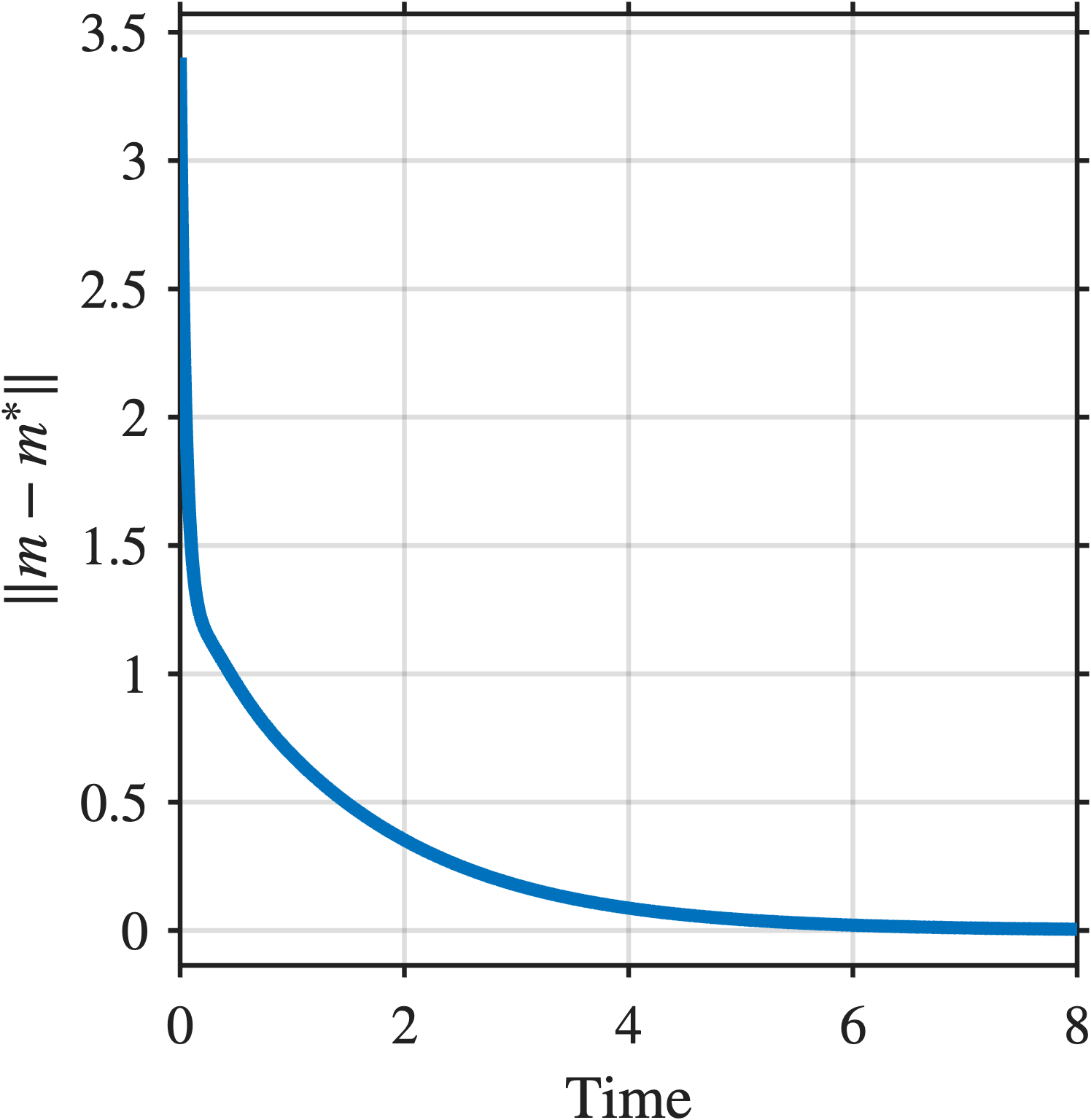}
    \caption{Edge-error norm.}
    \label{fig:ex3_edge}
  \end{subfigure}
  \hfill
  \begin{subfigure}[c]{0.24\textwidth}
    \centering
    \includegraphics[width=\linewidth]
    {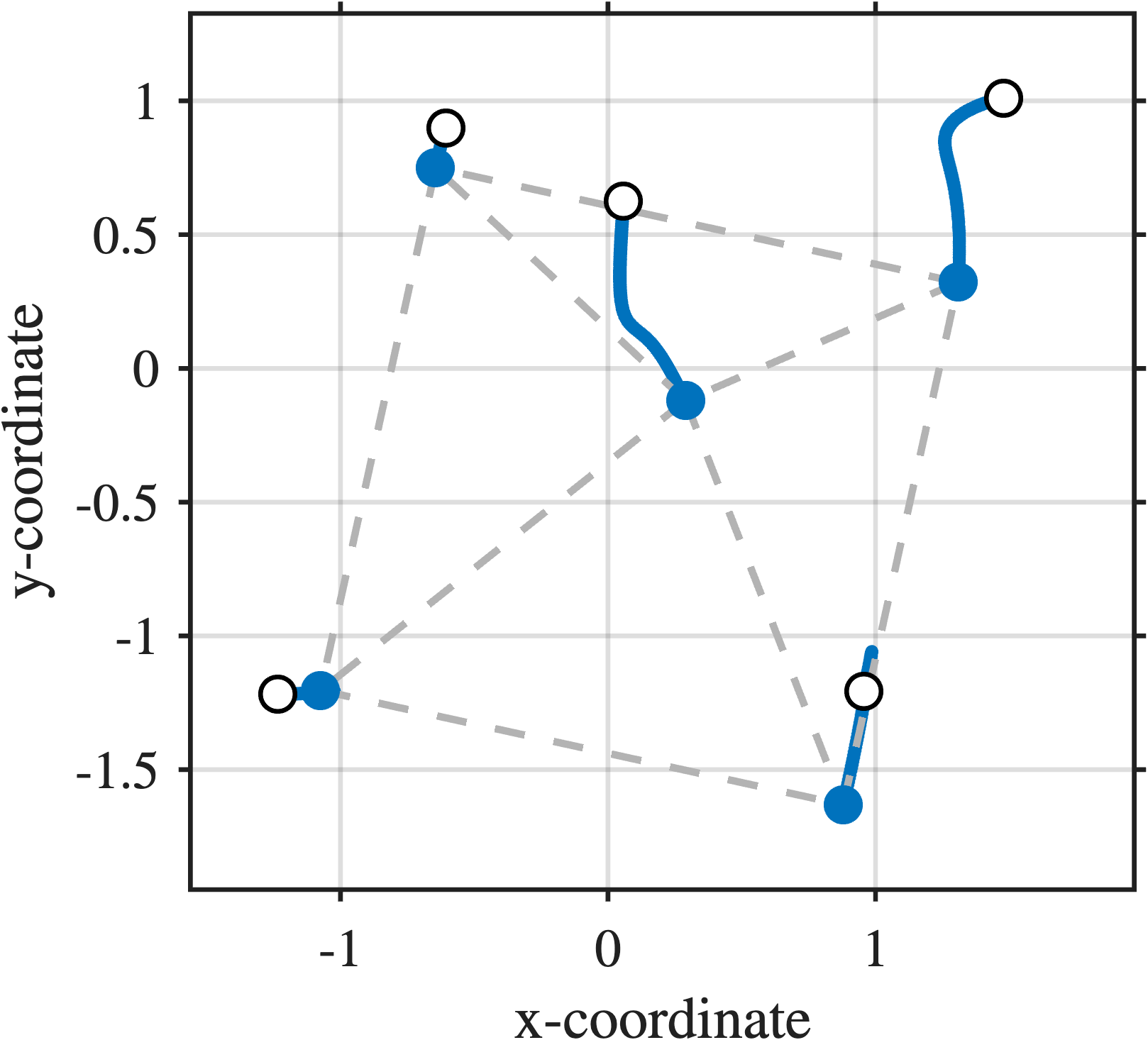}
    \caption{Node trajectories (open circles indicate the initial positions).}
    \label{fig:ex3_nodes}
  \end{subfigure}

  \caption{Directed wheel (Example~\ref{ex:wheel_hurwitz_not_pd}): $-A$ is Hurwitz
but $\frac{1}{2}(A+A^\top)$ is indefinite. The edge error converges to
zero and the formation converges to a congruent realization.
  }
  \label{fig:ex3}
\end{figure*}

\subsection{Strong admissibility and dynamics}
This next result is immediate from
Theorems \ref{thm: convergence certificates} and \ref{thm: admissible
is generic}.
\begin{corollary}[Exponential stability implies strong admissibility]
  \label{thm:admiss}
  Let Assumption~\ref{asump.drigid-geneic} hold.
  If a compatible formation controller
  (Definition~\ref{def.compatform}) is locally exponentially stable at any target formation $(p^*,m^*)$ the controller is strongly admissible.
\end{corollary}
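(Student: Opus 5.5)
The plan is to combine the necessity part of Theorem~\ref{thm: convergence certificates} with the one-point genericity certificate in Theorem~\ref{thm: admissible is generic}. Under Assumption~\ref{asump.drigid-geneic}, the graph $\G$ is $d$-rigid and the target $p^*$ is generic. A generic configuration is regular, so Assumption~\ref{asump.drigid-reg} also holds at $p^*$, and $(p^*,m^*)\in\mathcal S$ with $m^*=F(p^*)\in\mathcal Q$. This lets us apply every result of Section~\ref{sec.geoframework} at this target.

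\textbf{Step 1.} Suppose the controller is locally exponentially stable at $p^*$ in the sense of Definition~\ref{def: locally exponentially stable}. By the necessity clause of Theorem~\ref{thm: convergence certificates}, $-\eta^*|_T$ is Hurwitz, where $T=T_{m^*}\mathcal Q$. In other words, every eigenvalue of $\eta^*|_T$ has positive real part. In particular, $\eta^*|_T$ is hyperbolic at $p^*$.

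\textbf{Step 2.} Because $-\eta^*|_T$ is Hurwitz at the single point $p^*\in\mathcal C$, the ``moreover'' part of Theorem~\ref{thm: admissible is generic} applies. It gives that, for every generic $p$, the restriction $\eta_{(p,F(p))}|_{T_{F(p)}\mathcal Q}$ is hyperbolic. This is exactly clause (a) of Definition~\ref{def: admissible}, so the controller is strongly admissible.

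\textbf{Main obstacle.} The delicate point is hypotheses bookkeeping rather than analysis. Definition~\ref{def: admissible} and Theorem~\ref{thm: admissible is generic} both require $\eta$ to be rational in the state. I would read this as an implicit standing hypothesis of the corollary, i.e.\ the controller is ``defined by rational functions'' as in Definition~\ref{def: admissible}, and note that the smoothness needed for Theorem~\ref{thm: convergence certificates} is automatic for a rational controller near a point where it is defined.

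A second subtlety is that Theorem~\ref{thm: admissible is generic} transfers only the weaker property that no two eigenvalues sum to zero, via Lemma~\ref{lem: rational maps generic properties}(b), and this is why the Hurwitz condition, not mere hyperbolicity, is used as the one-point certificate. Here the necessity direction already supplies the Hurwitz condition, so no further work is needed.
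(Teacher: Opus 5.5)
Your proposal is correct and follows the paper's own argument: the paper likewise says the corollary is immediate from the necessity clause of Theorem~\ref{thm: convergence certificates} (Hurwitz at the generic, hence regular, target $p^*$) combined with the one-point Hurwitz-to-generic-hyperbolicity clause of Theorem~\ref{thm: admissible is generic}. Your note that the controller must be rational in the state for Definition~\ref{def: admissible} and Theorem~\ref{thm: admissible is generic} to apply is a fair reading of a hypothesis the paper leaves implicit.
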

In general, the different admissibility notions are distinct, as 
we will see in the next section.  


\section{Compatible controllers on Directed Graphs}\label{sec.directed}
An appealing feature of the gradient control \eqref{grad_Control} is
that it admits a \emph{distributed} implementation.  At the agent level,
\eqref{grad_Control} is equivalent to
\begin{equation}\label{agent_grad_control}
  u_i = \sum_{ij\in \E} (\|p_i-p_j\|^2-m^*_{ij})(p_j-p_i).
\end{equation}
Each agent therefore only requires information from its neighboring
agents defined by the graph $\G$.  This implementation, however,
assumes bidirectional sensing
as each edge contributes to the control of both incident agents.

We remove this reciprocity by orienting the edges of $\mathcal G$.
Given an orientation $\overrightarrow{\mathcal G}$,
we assign each distance constraint to its tail vertex and consider
\begin{equation}\label{dir_agent_grad_control}
  u_i = \sum_{\overrightarrow{ij}\in \E}(\|p_i-p_j\|^2-m^*_{ij})(p_j-p_i),
\end{equation}
where $\overrightarrow{ij}$ is the directed edge from node $i$ to
$j$. This model was used, for example, in \cite{Zhang_JDSMC2018}. The
aggregate dynamics of \eqref{dir_agent_grad_control}, which is no longer a
gradient flow, becomes
\begin{equation}\label{dir_formation_ctr}
  \dot p = -\overrightarrow R(p)^\top (R(p)p-m^*).
\end{equation}
The matrix $\overrightarrow{R}(p)$ is obtained from $R(p)$ by zeroing
out, in the
row corresponding to $ij$, the entries
corresponding to the node $j$ for the orientation $\overrightarrow{ij}$,
and the entries corresponding to $i$ for the orientation
$\overleftarrow{ij}$.
We henceforth define \eqref{dir_formation_ctr} to be the
\emph{($d$-dimensional) directed controller}.

The directed controller \eqref{dir_formation_ctr} naturally fits into
the compatible controller framework introduced in
Definition~\ref{def.compatform}. Indeed, from \eqref{eq:pm-dynamics},
we identify
\[
  \nu_{(p,m)}=\overrightarrow R(p)^\top.
\]
Since $\dot m=2R(p)\dot p$, the induced edge dynamics are
\[
  \dot m= 2R(p)\overrightarrow R(p)^\top (m^*-m).
\]
Thus the corresponding edge operator is
\[
  \eta_{(p,m)}= 2R(p)\overrightarrow R(p)^\top ,
\]
showing that the directed controller is compatible.
In this
section, it will be useful to have notation for the matrix of
the transformations $\eta_{(p,m)}$ and $\eta_{(p,m)}|_{T_{m} \mathcal{Q}}$.
To this end, we define
\[
  Z = 2R(p)\overrightarrow{R}(p)^\top\text{ and } A = P^\top ZP,
\]
where $P$ has orthonormal columns spanning the column space
of $R(p)$.
These are the matrices of $\eta_{(p,m)}$ in the
standard basis and $\eta_{(p,m)}|_{T_{m} \mathcal{Q}}$ in an
orthonormal basis.  The latter assertion uses that $(p,m)\in \mathcal{S}$
implies that $\IM R(p) = T_{m} \mathcal{Q}$.

\subsection{Local Stability of Directed Compatible Controllers}
For the directed controller \eqref{dir_formation_ctr}, convergence depends on
the target $(p^*,m^*)$.  Moreover, there are targets where
the certificate of convergence (a) from Theorem \ref{thm: convergence
certificates}
holds but (b) does not.  In this section, we illustrate both
phenomena with numerical examples.

\begin{example}[Hurwitz stability without the quadratic
  certificate]\label{ex:wheel_hurwitz_not_pd}
  This example shows that (a) of
  Theorem~\ref{thm: convergence certificates} does not imply (b).
  Consider the directed wheel and
  target framework shown in Figure~\ref{fig:ex3_graph}.
  A direct computation gives $\min \operatorname{Re}\lambda(A)=0.7221>0$, and    $\min_{\|v\|=1} q(v)=-0.1623$.   Thus $-A$ is Hurwitz, while the quadratic form $q$ is
  indefinite. Nevertheless, the edge error converges to zero
  (Figure~\ref{fig:ex3_edge}), and the corresponding node
  trajectories converge to a realization congruent to the target
  (Figure~\ref{fig:ex3_nodes}). Hence the quadratic-form condition is
  sufficient but not necessary for local exponential stability.
\end{example}
\begin{figure*}[!t]
  \centering
  \begin{subfigure}[c]{0.28\textwidth}
    \centering
 \includegraphics[
  width=\linewidth
]{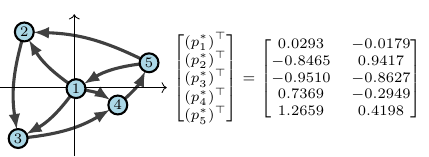}
    \caption{Directed target.}
    \label{fig:ex4_graph}
  \end{subfigure}
  \hfill
  \begin{subfigure}[c]{0.2\textwidth}
    \centering
    \includegraphics[width=\linewidth]
    {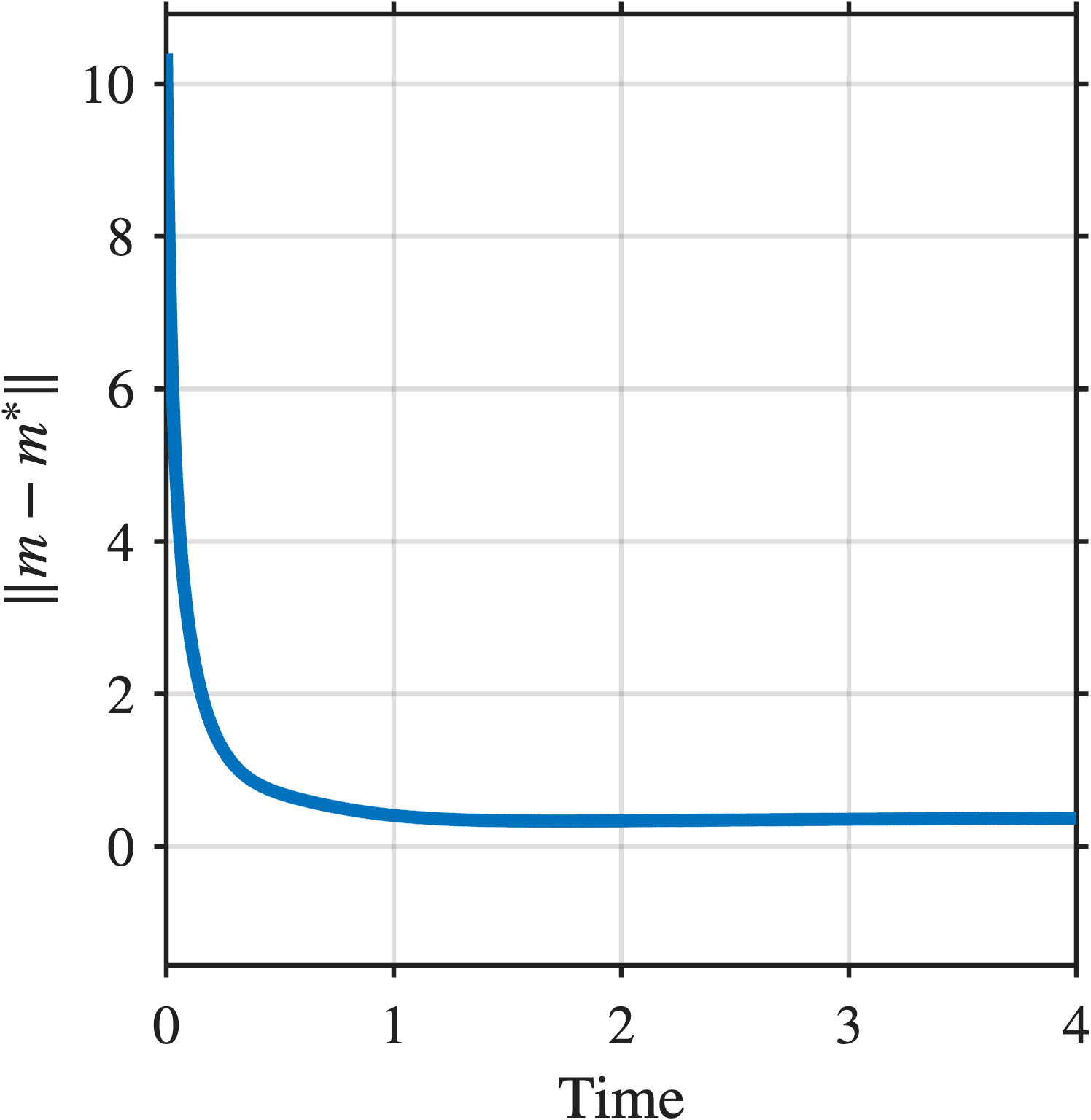}
    \caption{Edge-error norm.}
    \label{fig:ex4_edge}
  \end{subfigure}
  \hfill
  \begin{subfigure}[c]{0.2\textwidth}
    \centering
    \includegraphics[width=\linewidth]
    {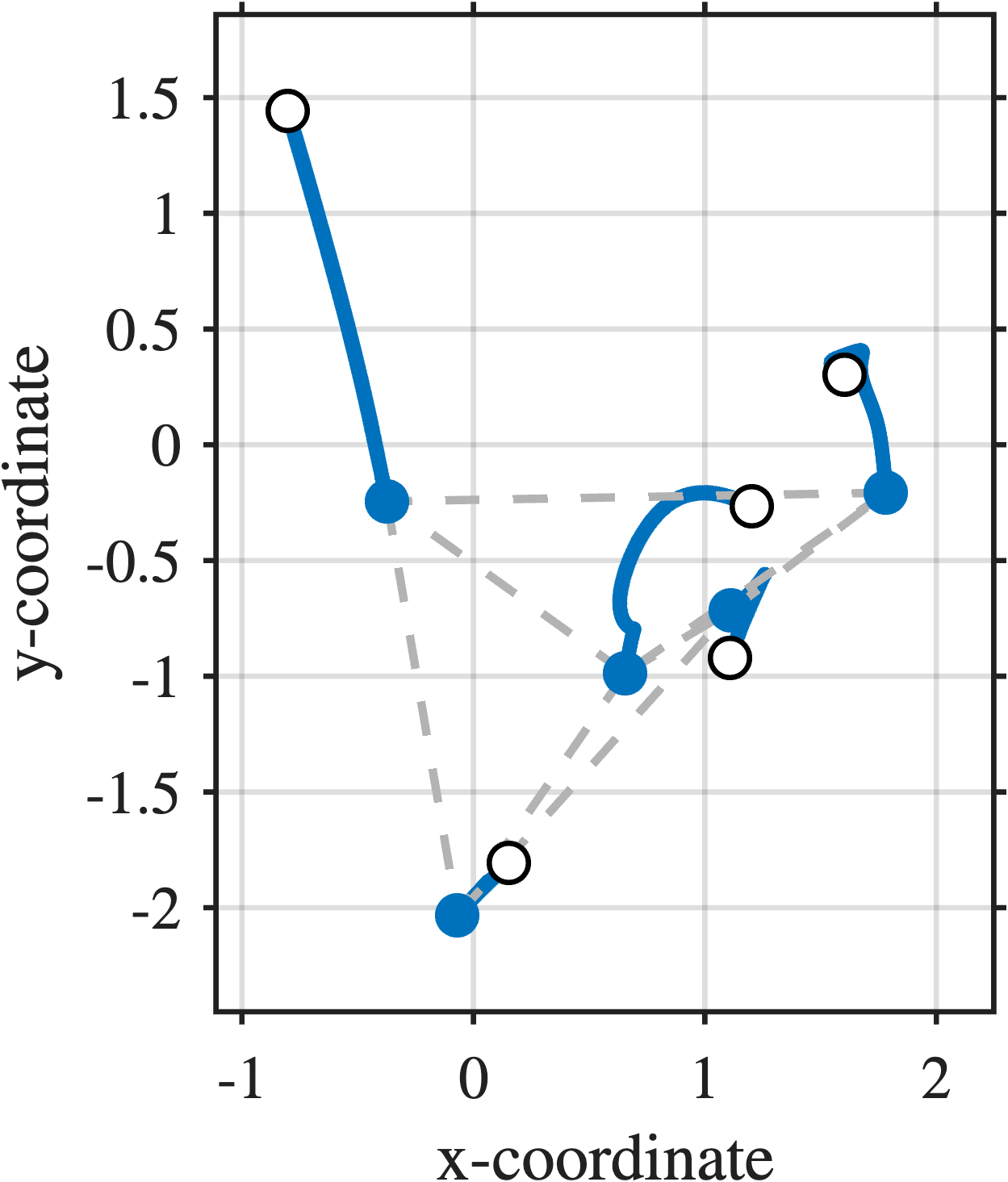}
    \caption{Node trajectories  (open circles indicate the initial positions).}
    \label{fig:ex4_nodes}
  \end{subfigure}

  \caption{
  Directed wheel (Example~\ref{ex:wheel_not_hurwitz_not_pd}): $-A$ is not Hurwitz. The edge error settles at a nonzero value and the formation converges to a non-congruent configuration.
  }
  \label{fig:ex4}
\end{figure*}
\begin{example}[Failure of both stability tests]
  \label{ex:wheel_not_hurwitz_not_pd}
  Consider the directed wheel and target framework shown in
  Figure~\ref{fig:ex4_graph}.
  A direct computation gives $\min \operatorname{Re}\lambda(A)=-0.0598<0$, and    $min_{\|v\|=1} q(v)=-0.2012$.  Thus $-A$ is not Hurwitz and the quadratic form $q$ is
  indefinite. The edge error converges to a nonzero value
  (Figure~\ref{fig:ex4_edge}), and the corresponding node
  trajectories converge to a configuration that is not congruent
  to the target (Figure~\ref{fig:ex4_nodes}). Together with
  Example~\ref{ex:wheel_hurwitz_not_pd}, this example shows
  that local stability can depend on the target geometry even when the
  directed orientation is fixed.
  %
\end{example}

Theorem~\ref{thm: convergence certificates} therefore gives a
complete stability test for a fixed orientation
and target geometry. To obtain information about the orientation
independently of a particular target,
we now return to the generic admissibility notions
introduced in Section \ref{sec.admissibility}.
\subsection{Admissibility for Directed Orientations}

For the directed controller, generic  admissibility defines a property of the
orientation $\overrightarrow{\mathcal G}$.
In this case,  we refer to $\overrightarrow{\G}$ as a
\emph{(weakly/strongly) admissible orientation} of  $\G$. Unlike the
symmetric controllers of Section \ref{sec.geoframework},
the three notions of Definition~\ref{def: admissible} need not
coincide for the directed controllers,
which we show with the following example.

\begin{example}[Weak admissibility does not imply admissibility]
  \label{ex:weak-not-admissible}
  Consider the directed graph in Figure~\ref{fig:weak-not-admissible}.
  A direct computation gives $\rk Z=9=2n-3$.
  Since \(9\) is the maximal possible rank and the entries of \(Z\)
  depend polynomially on the target
  configuration, it follows that $\rk Z=9$ for every generic target.
  Hence the controller is
  weakly admissible. To check admissibility, we compute the eigenvalues
  of the reduced
  matrix \(P^\top ZP\) as $\{29.6147\pm2.8990
  i,\;22.1401,\;14.8599,\;10.0000,\;7.7485\pm0.1488 i,\;3.2737,\;0\}$.
  The zero eigenvalue  shows that the reduced matrix is not invertible.
  Therefore, the controller is weakly admissible but not admissible.
  \begin{figure}[!ht]
    \centering
\includegraphics[
  width=.5\columnwidth
]{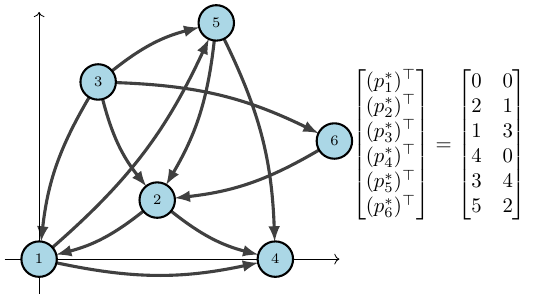}
    \caption{A directed formation that is weakly admissible but not
      admissible. }
    \label{fig:weak-not-admissible}
  \end{figure}
\end{example}


We conclude this section by showing that every $d$-rigid graph has an
admissible orientation.
\begin{theorem}[Admissible orientations exist]\label{thm: admissible
  orientations}
  Let $\G$ be a generically $d$-rigid graph.  Then there is an
  admissible orientation $\overrightarrow{\G}$ of $\G$.
\end{theorem}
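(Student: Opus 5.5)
The plan is to reduce to a single generic target via Theorem~\ref{thm: admissible is generic}(b). There, I will interpolate between orientations by a continuous parameter and use a multiaffinity argument to pass from the symmetric (undirected) case to some genuine orientation.

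\emph{Setup.} Fix a generic $p^*$ with $m^*=F(p^*)$ and $T=T_{m^*}\mathcal Q=\IM R(p^*)$, and write $N=\dim T$. Let $P$ be a $|\E|\times N$ matrix with orthonormal columns spanning $T$. Write $R=R(p^*)$ and let $r_e$ be the row of $R$ for the edge $e=ij$. Split $r_e=a_e+b_e$, where $a_e$ keeps only the entries in the block of $i$ and $b_e$ keeps only the entries in the block of $j$. For $s\in[0,1]^{\E}$, define the $|\E|\times dn$ matrix $\widetilde R_s$ whose $e$-th row is $\rho_e(s_e)=s_ea_e+(1-s_e)b_e$. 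Put
\[
A(s)=2P^\top R\,\widetilde R_s^\top P .
\]
Each vertex $s\in\{0,1\}^{\E}$ corresponds exactly to an orientation, with $\widetilde R_s=\overrightarrow R$. For that orientation $A(s)$ is the matrix of $\eta^*|_T$, so $\det A(s)\neq 0$ is admissibility at $p^*$. At the centre $s=(\tfrac12,\dots,\tfrac12)$ we have $\widetilde R_s=\tfrac12 R$, so $A=P^\top RR^\top P$. Since $\IM R=T$, this matrix is positive definite on $T$ and $\det A(\tfrac12,\dots,\tfrac12)>0$.

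\emph{Key step: $\det A(s)$ is multiaffine in $s$.} Apply Cauchy--Binet to the product of the $N\times dn$ matrix $X=2P^\top R$ and the $dn\times N$ matrix $Y=\widetilde R_s^\top P$. This gives $\det A=\sum_{|S|=N}\det X_{:,S}\det Y_{S,:}$, where $S$ ranges over column index sets. Apply Cauchy--Binet again to $Y_{S,:}=(\widetilde R_s)_{:,S}^\top P$:
\[
\det Y_{S,:}=\sum_{|E'|=N}\det\big((\widetilde R_s)_{E',S}\big)\det P_{E',:}.
\]
Each minor $\det(\widetilde R_s)_{E',S}$ uses distinct rows, hence distinct edges. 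It is therefore affine in each $s_e$ separately, and the same holds for $\det A(s)$. A multiaffine function on the cube has value at the centre equal to the average of its values at the $2^{|\E|}$ vertices; this follows by integrating out one coordinate at a time. Hence some vertex $s\in\{0,1\}^{\E}$, i.e.\ some orientation $\overrightarrow{\G}$, has $\det A(s)\neq0$.

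\emph{Conclusion.} For this orientation, $\eta^*|_T$ is invertible at the single target $p^*$. The directed controller is polynomial in the state, so Theorem~\ref{thm: admissible is generic}(b) upgrades this to invertibility at every generic target. Thus $\overrightarrow{\G}$ is an admissible orientation.

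The main obstacle is the multiaffinity claim. The projection $P$ mixes edge coordinates, so it is not evident that $\det A$ separates variables edge by edge, and the double Cauchy--Binet expansion has to be set up so that each $s_e$ enters only through a single row. A lesser point is that the orientation obtained a priori depends on $p^*$. This is harmless: we only need one point, and genericity then propagates the property. Alternatively, one may note that each $\det A(s)$ at a vertex is a rational function of $p^*$, and a nonzero one remains nonzero generically.
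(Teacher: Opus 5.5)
Your proof is correct and follows essentially the paper's argument. Both interpolate between the two orientations of each edge with a parameter confined to a single row (your $\widetilde R_s$) or column (the paper's $Z(\mathbf t)$). Both obtain multiaffinity of $\det A$ from Cauchy--Binet, use $A(\tfrac12\mathbf 1)=P^\top RR^\top P\succ 0$ to find a $0/1$ vertex with $\det A\neq 0$, and then upgrade to all generic targets via Theorem~\ref{thm: admissible is generic}.

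The only difference is in the last step of the cube argument. The paper uses the minimal-support monomial argument of Lemma~\ref{lem: multiaffine}, whereas your averaging identity over the vertices reaches the same conclusion directly.
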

In the proof, we need a simple vanishing result for multiaffine
polynomials.
\begin{lemma}\label{lem: multiaffine}
  Let $A({\bf t})$ be a multiaffine polynomial\footnote{A polynomial
    in ${\bf t}=(t_1,\ldots,t_m)$ is \emph{multiaffine} if it has degree
  at most one in each variable individually.}
  in variables $t_1, \ldots, t_m$.  Then
  $A$ vanishes identically if and only if it vanishes on all zero/one
  vectors ${\bf x}\in \{0,1\}^m$.
\end{lemma}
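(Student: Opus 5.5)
We argue by induction on the number of variables $m$. The direction ``vanishes identically $\Rightarrow$ vanishes on $\{0,1\}^m$'' is trivial, so only the converse needs proof. For $m=0$, $A$ is a constant, and it equals its value at the empty zero/one vector, so vanishing there means $A\equiv 0$.

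For the inductive step, suppose $A$ is multiaffine in $t_1,\ldots,t_m$ and vanishes on $\{0,1\}^m$. Because $A$ has degree at most one in $t_m$, we can write
\[
  A({\bf t}) = B(t_1,\ldots,t_{m-1}) + t_m\, C(t_1,\ldots,t_{m-1}),
\]
where $B$ and $C$ are multiaffine in the remaining $m-1$ variables. Setting $t_m=0$ gives $A|_{t_m=0}=B$. Hence $B$ vanishes on $\{0,1\}^{m-1}$, and by the induction hypothesis $B\equiv 0$. Setting $t_m=1$ gives $A|_{t_m=1}=B+C=C$. So $C$ also vanishes on $\{0,1\}^{m-1}$, and again $C\equiv 0$. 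Therefore $A=B+t_mC\equiv 0$.

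An equivalent non-inductive view is that the map sending a multiaffine polynomial to its values on $\{0,1\}^m$ is a linear map between spaces of dimension $2^m$. It is injective because the monomial $\prod_{i\in S}t_i$ is the indicator of ${\bf x}\ge \mathbf 1_S$, and these indicators form a triangular system. The induction above is the cleanest way to write this down. There is no real obstacle; the only point to check is that both $B$ and $C$ stay multiaffine in $m-1$ variables, which holds because multiaffinity is preserved when we separate out the coefficients of a single variable.
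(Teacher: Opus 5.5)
Your proof is correct, but your main argument takes a different route from the paper's. You induct on $m$: you write $A=B+t_mC$, use $t_m=0$ to kill $B$, and then use $t_m=1$ to kill $C$.

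The paper argues directly instead. If $A\not\equiv 0$, it picks a monomial $M$ of $A$ whose support has minimum cardinality, and evaluates $A$ at the indicator vector $\mathbf{x}$ of that support. The monomials of a multiaffine polynomial have distinct supports, so minimality means no other monomial's support is contained in $\operatorname{supp}(M)$. Hence every other monomial vanishes at $\mathbf{x}$, and $A(\mathbf{x})$ equals the nonzero coefficient of $M$.

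Your closing remark about the unitriangular evaluation system $[S\subseteq T]$ is exactly this argument in matrix form. Choosing a minimal-support monomial amounts to choosing the first nonzero coordinate in the triangular order.

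What each route buys:
\begin{itemize}
\item The paper's route produces an explicit witness $\mathbf{x}\in\{0,1\}^m$ in one step, with $A(\mathbf{x})$ equal to a coefficient of $A$.
\item Your induction is more mechanical and needs no reasoning about supports. It does not name the witness directly, though you could extract one by tracking which of $B$, $C$ is nonzero at each stage.
\end{itemize}
Either argument suffices for the lemma's use in the proof that admissible orientations exist. That proof only needs some $\mathbf{x}$ with $\det A(\mathbf{x})\neq 0$ once $\det A(\tfrac12\mathbf{1})>0$ has shown $\det A\not\equiv 0$.
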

The proof of the Lemma is given in Appendix \ref{pf_lemma3}.
We also need a specialized sufficient condition for a
matrix with polynomial entries to have a multiaffine determinant.
\begin{lemma}\label{lem: weighted determinant}
  Let $A({\bf t})\in \R[{\bf t}]^{m\times m}$ and $B\in \R^{m\times p}$ be
  matrices with $p \le m$.\footnote{Here, $\R[{\bf t}]$, with ${\bf
    t}=(t_1,\ldots,t_k)$,
    denotes the set of polynomials in $t_1,\ldots,t_k$ with real
    coefficients. Thus $A({\bf t})$
  is a matrix whose entries are polynomials in ${\bf t}$.}  If every
  $p\times p$ minor of $A({\bf t})$
  is multiaffine then $\det B^\top A({\bf t})B$ is multiaffine.
\end{lemma}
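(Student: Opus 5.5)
We apply the Cauchy--Binet formula twice. Write $M({\bf t}) := B^\top A({\bf t}) B$, a $p\times p$ matrix. Expanding the product $B^\top \cdot (A({\bf t})B)$ by Cauchy--Binet over $p$-subsets $I\subseteq\{1,\ldots,m\}$ gives
\[
  \det M({\bf t}) = \sum_{|I|=p} \det\bigl(B^\top_{\cdot,I}\bigr)\,\det\bigl((A({\bf t})B)_{I,\cdot}\bigr),
\]
where $B^\top_{\cdot,I}$ consists of the columns of $B^\top$ indexed by $I$. We expand the second factor again, now as $(A({\bf t}))_{I,\cdot}\,B$, with the sum running over $p$-subsets $J$:
\[
  \det\bigl((A({\bf t})B)_{I,\cdot}\bigr) = \sum_{|J|=p} \det\bigl(A({\bf t})_{I,J}\bigr)\det\bigl(B_{J,\cdot}\bigr).
\]
If $p=m$, both sums reduce to the single term $I=J=\{1,\ldots,m\}$, so the formula is just $\det B^\top\det A\det B$. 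The case $p<m$ is the genuine Cauchy--Binet situation, and the same expansion handles it.

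Combining the two expansions,
\[
  \det B^\top A({\bf t}) B = \sum_{|I|=|J|=p} \det(B_{I,\cdot})\,\det(B_{J,\cdot})\,\det\bigl(A({\bf t})_{I,J}\bigr).
\]
This is a linear combination, with constant real coefficients, of the $p\times p$ minors of $A({\bf t})$. By hypothesis each of these minors is multiaffine. The multiaffine polynomials form a real vector space, since degree at most one in each variable is preserved under sums and scalar multiples. Hence $\det B^\top A({\bf t})B$ is multiaffine.

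The only point needing care is the bookkeeping in Cauchy--Binet for non-square factors. We apply the rectangular version with the $p\times m$ matrix $B^\top$ and the $m\times p$ matrix $A({\bf t})B$. Then we apply it a second time to the $p\times m$ row-submatrix $A({\bf t})_{I,\cdot}$ and the $m\times p$ matrix $B$. Both applications are legitimate because $p\le m$, and the identity holds over the polynomial ring $\R[{\bf t}]$ because Cauchy--Binet is valid over any commutative ring. No other obstacle is expected.
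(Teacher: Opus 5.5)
Your proposal is correct and follows the paper's proof essentially verbatim. Both apply Cauchy--Binet twice to write $\det B^\top A({\bf t})B=\sum_{I,J}\det B_{I,\cdot}\,\det B_{J,\cdot}\,\det A({\bf t})_{I,J}$, a constant-coefficient linear combination of $p\times p$ minors of $A({\bf t})$, and then use that multiaffine polynomials are closed under linear combinations.
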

The proof of the Lemma is given in Appendix \ref{pf_lemma4}.

\begin{proof}[Proof of Theorem \ref{thm: admissible orientations}]
  Let $\G$ be generically $d$-rigid, and fix a generic configuration
  $p^*$.   Let $P$ be a matrix with orthonormal columns spanning
  the column space of $R$.

  For each edge $ij\in \E$,
  define $r_{ij}$ to be the row of the
  rigidity matrix corresponding to the edge $ij$ and $\overrightarrow{r_{ij}}$
  the vector obtained by zeroing out the entries corresponding to $j$ in
  $2r_{ij}$; define $\overleftarrow{r_{ij}}$ similarly so that
  $r_{ij} = \frac{1}{2}\overrightarrow{r_{ij}} +
  \frac{1}{2}\overleftarrow{r_{ij}}$ (which is the reason 
  we scaled the directed rows in this proof).
  Denote by $\overrightarrow{R}$ the matrix with rows
  $\overrightarrow{r_{ij}}$ and
  $\overleftarrow{R}$ the matrix with rows $\overleftarrow{r_{ij}}$,
  ordered compatibly,
  so that $R = \frac{1}{2}\overrightarrow{R} + \frac{1}{2}\overleftarrow{R}$.

  Now we define a matrix
  $Z({\bf t}) = R[T\overrightarrow{R} + (I - T)\overleftarrow{R}]^\top$, where
  $T$ is an $|\E|\times |\E|$ diagonal matrix with variables $t_{ij}$ on the
  diagonal, and then $A({\bf t}) = P^\top Z({\bf t})P$.  For each ${\bf
  x}\in\{0,1\}^{|\E|}$, $Z({\bf x})$ corresponds
  to the orientation $\overrightarrow{\G}_{\bf x}$ obtained by
  orienting $ij$ as $\overrightarrow{ij}$ if $x_{ij}=1$ and as
  $\overleftarrow{ij}$ if $x_{ij}=0$. Thus, it suffices to find ${\bf
  x}\in\{0,1\}^{|\E|}$ such that
  $\det A({\bf x})\neq 0$, since this implies that
  $\eta^*|_{T_{m^*}\mathcal Q}$ is invertible for
  $\overrightarrow{\G}_{\bf x}$.


  We first show that $\det A({\bf t})$ is multi-affine. This follows
  from Lemma \ref{lem: weighted determinant} once we observe that
  each variable $t_{ij}$ appears in at most one column of
  $Z({\bf t})$ and that $P$ is a fixed constant.  Because
  $A(\frac{1}{2}{\bf 1}) = P^\top RR^\top P$ is positive definite,
  $\det A(\frac{1}{2}{\bf 1}) > 0$.  Lemma
  \ref{lem: multiaffine} then implies that there is some
  ${\bf x}\in \{0,1\}^{\E}$ such that $\det A({\bf x}) \neq 0$.
  This shows that the directed orientation $\overrightarrow{\G}_{\bf x}$
  is admissible at the target $p^*$.  That $\overrightarrow{G}_{\bf x}$
  is generically admissible now follows from
  Theorem \ref{thm: admissible is generic}.
\end{proof}

\subsection{Acyclic orientations and the vertex flag}
Acyclic orientations play an important role in the literature on
directed formation control.  The edge-based perspective in this
paper allows us to give a simple explanation of why this is
so and unify much of what is known.
\begin{theorem}[Stability of acyclic controllers]\label{thm: acyclic}
  If $\overrightarrow{\G}$ is acyclic, then the following are equivalent:
  \begin{enumerate}[(a)]
    \item $\overrightarrow{\G}$ is weakly admissible;
    \item $\overrightarrow{\G}$ is admissible;
    \item $\overrightarrow{\G}$ is strongly admissible;
    \item the map $-\eta^*|_{T_{m^*} \mathcal{Q}}$ is Hurwitz
      at every generic target $(p^*,m^*)$.
  \end{enumerate}
\end{theorem}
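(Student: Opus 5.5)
The plan is to prove the chain (d)$\Rightarrow$(c)$\Rightarrow$(b)$\Rightarrow$(a)$\Rightarrow$(d). The first three implications are immediate. If $-\eta^*|_{T_{m^*}\mathcal Q}$ is Hurwitz, then all eigenvalues have positive real part, so the map is hyperbolic. The remaining two steps are the pointwise chain noted after Definition~\ref{def: admissible}. All the work is in (a)$\Rightarrow$(d), and for that I would apply Theorem~\ref{thm: flag} to a flag built from a topological order of the vertices, the \emph{vertex flag}.

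\emph{Flag construction.} Since $\overrightarrow{\G}$ is acyclic, I would order the vertices $v_1,\ldots,v_n$ so that every edge $\overrightarrow{ij}$ whose head is $v_k$ has its tail among $v_1,\ldots,v_{k-1}$, i.e.\ sources first. Let $V_k$ be the span of the standard basis vectors $e_f$ for the edges $f$ whose tail lies in $\{v_1,\ldots,v_k\}$. Then $W_k=V_k/V_{k-1}$ is spanned by the out-edges of $v_k$. The flag is constant, hence rational in the state.

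\emph{Invariance.} The row $\overrightarrow{r_f}$ is supported only on the coordinates of $\mathrm{tail}(f)=v$. Hence column $f$ of $Z=2R\overrightarrow R^\top$ is nonzero only in rows $e$ incident to $v$. Such an $e$ is either an out-edge of $v$, or an in-edge of $v$, whose tail precedes $v$ in the order. In both cases $Z e_f\in V_k$ whenever $v=v_k$, so each $V_k$ is $\eta$-invariant. I expect this to be the main point needing care: the order convention must be checked against the convention that a constraint is assigned to its tail.

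\emph{Induced maps and decoupling.} For out-edges $e=\overrightarrow{v a}$ and $f=\overrightarrow{v b}$ of $v=v_k$, the entry $Z_{ef}$ is a positive multiple of $(p_v-p_a)\cdot(p_v-p_b)$. Thus $\eta_k$ is a Gram matrix: it is symmetric positive semidefinite, satisfies \eqref{eq: skew}, has nonnegative real spectrum, and has rank equal to $\rk\{p_v-p_a\}$. For the decoupling condition I would use Lemma~\ref{lem: decoupling}(d). After permuting rows and columns, $\overrightarrow R$ is block diagonal with one block per vertex, namely that vertex's out-edge vectors, so
\[
\rk\eta\le\rk\overrightarrow R=\textstyle\sum_k\rk\eta_k .
\]
Together with the unconditional inequality $\rk\eta\ge\sum_k\rk\eta_k$, this gives equality. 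So the vertex flag is decoupling at every state, in particular at a generic target.

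\emph{Conclusion.} Now assume (a). By Theorem~\ref{thm: flag}, admissibility is equivalent to each $\eta_k^*$ satisfying \eqref{eq: skew}, which holds, so the controller is admissible. By Lemma~\ref{lem: flag}, the spectrum of $\eta^*$ is the union of the spectra of the PSD maps $\eta^*_k$, so its nonzero spectrum is real and positive. The second part of Theorem~\ref{thm: flag} then gives that $-\eta^*|_{T_{m^*}\mathcal Q}$ is Hurwitz. Since this argument applies verbatim at every generic target, (d) follows.
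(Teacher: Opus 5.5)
Your proof is correct and follows the paper's argument: the same tail-first vertex flag and the same invariance check. You also identify the quotient maps as Gram matrices $2Q_kQ_k^\top$, and then conclude via Theorem~\ref{thm: flag} and Lemma~\ref{lem: flag}, just as the paper does. The one difference is the decoupling step. You use the rank sandwich $\sum_k\rk\eta_k\le\rk\eta\le\rk\overrightarrow R=\sum_k\rk\eta_k$, which comes from the block-diagonal structure of $\overrightarrow R$, together with Lemma~\ref{lem: decoupling}(d). The paper instead verifies criterion (b) of Lemma~\ref{lem: decoupling} directly: it writes $v=w+x$ with $x$ supported on the out-edges of $v_j$ and shows $Zx=0$. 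Both are valid, and yours is the same rank-count device the paper itself uses for stable root extensions.
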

The proof is an application of Theorem \ref{thm: flag} to a
flag naturally associated with the orientation that we now describe.
Suppose that $\overrightarrow{\G}$
is an acyclic directed graph, and that the vertices are topologically
ordered so that the
edge $ij$ has orientation $\overrightarrow{ij}$ if and only if $i < j$; such
an ordering exists if and only if $\overrightarrow{G}$ is acyclic.
We define the
following subspaces of $\R^{|\E|}$: $V_0 = \{0\}$,
and, for $1\le j \le |\V|$, $V_j$ is the coordinate subspace spanned by
coordinates corresponding to edges $\overrightarrow{ij}$ with their tail in the
initial segment $1\le i\le j$.  This implies that $V_{|\V|} = \R^{|\E|}$.
Plainly the $V_i$ form a flag of subspaces which we call the \emph{vertex flag}
of $\overrightarrow{\G}$.
\begin{lemma}\label{lem: vertex flag}
  Let $\overrightarrow{\G}$ be an acyclic directed graph and let $V_j$
  be the vertex flag.  Then, for all
  targets $p^*\in \mathcal{C}$ of the associated directed controller
  with $Z = 2R(p^*)\overrightarrow{R}(p^*)^\top$,
  \begin{enumerate}[(a)]
    \item each $V_j$ is $Z$-invariant;
    \item the induced maps $Z_j: W_j \to W_j$ along the quotients $W_j =
      V_j/V_{j-1}$
      are PSD and satisfy \eqref{eq: skew};
    \item the vertex flag satisfies the decoupling condition.
  \end{enumerate}
\end{lemma}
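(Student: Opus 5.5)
The plan is a direct computation with the sparsity pattern of $Z = 2R(p^*)\overrightarrow{R}(p^*)^\top$. Everything follows from one observation about the directed rows. Fix the topological order, so every edge is $\overrightarrow{ij}$ with $i<j$. Write $e_f$ for the standard basis vector of the edge $f$. For an edge $f=\overrightarrow{ik}$, the directed row $\overrightarrow{r}_f$ has support only in the block of its tail $i$, where it equals $p_i-p_k$ up to the fixed scalar convention. Consequently
\[
Z e_f = 2R(p^*)\,\overrightarrow{r}_f^{\,\top},
\]
and the row $r_e$ of an edge $e$ pairs nontrivially with this vector only if $i$ is an endpoint of $e$. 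So $Z e_f$ is supported on edges incident to the tail $i$ of $f$.

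For (a), take an edge $e$ incident to $i$. Either $e$ has tail $i$, or $e$ has head $i$, in which case its tail is smaller than $i$ by acyclicity and the ordering. In both cases $e$ has tail $\le i$, so $Ze_f\in V_i$. If $f$ is one of the generators of $V_j$, then $i\le j$, so $Ze_f\in V_i\subseteq V_j$. Hence each $V_j$ is $Z$-invariant.

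For (b), identify $W_j=V_j/V_{j-1}$ with the coordinate space of the out-edges of $j$. Let $M_j$ be the matrix whose rows are the vectors $p_j-p_k$ for $\overrightarrow{jk}\in\E$. For two out-edges $e,f$ of $j$, the only common vertex block of $r_e$ and $\overrightarrow{r}_f$ is the block of $j$. There, both rows carry $p_j-p_{\mathrm{head}}$, so $Z_{ef}$ is a fixed positive multiple of $(p_j-p_{h(e)})\cdot(p_j-p_{h(f)})$. Thus $Z_j = c\,M_jM_j^\top$ with $c>0$, which is symmetric PSD. A symmetric matrix has $\IM Z_j=(\ker Z_j)^\perp$, so \eqref{eq: skew} holds.

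For (c), I will verify criterion (b) of Lemma~\ref{lem: decoupling}. Let $v\in V_j$ with $Zv\in V_{j-1}$, and split $v=v'+u$ with $v'\in V_{j-1}$ and $u$ supported on the out-edges of $j$. By invariance $Zv'\in V_{j-1}$, so the $W_j$-component of $Zv$ is $Z_j u = cM_jM_j^\top u$, and it must vanish. Since $Z_j$ is PSD, $u^\top M_jM_j^\top u=0$ gives $M_j^\top u=0$. Now $\overrightarrow{R}^\top u$ is supported only in the block of $j$, and that block equals (a multiple of) $M_j^\top u=0$. Hence $Zu=2R\overrightarrow{R}^\top u=0$, and therefore $Zv=Zv'\in Z(V_{j-1})$, which is the decoupling condition.

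The main obstacle is purely bookkeeping: the paper's sign and scaling conventions for $R$ and $\overrightarrow{R}$ must make the two $j$-blocks genuinely the same vector. This is what gives $Z_j$ a Gram form rather than a signed one, and I expect it to check out directly from the definition of $\overrightarrow{R}$. The one real idea is the step in (c): PSD-ness upgrades $Z_ju=0$ to $\overrightarrow{R}^\top u=0$, which kills $Zu$ entirely.
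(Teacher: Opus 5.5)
Your proposal is correct and follows essentially the same argument as the paper. The three parts match it directly:
\begin{itemize}
\item Invariance comes from the support pattern: directed rows live only in tail blocks, and pulling back by $R(p^*)$ only reaches edges with tail $\le j$.
\item $Z_j$ is identified with the Gram matrix $2Q_jQ_j^\top$ of the out-edge vectors at $j$.
\item Decoupling is verified through criterion (b) of Lemma~\ref{lem: decoupling}, with $\ker Z_j=\ker Q_j^\top$ forcing $\overrightarrow{R}(p^*)^\top u=0$.
\end{itemize}
The sign convention you flagged does check out: $F(p)=R(p)p$ forces the $j$-block of $r_e$ to be $p_j-p_{h(e)}$, and this is exactly the block that $\overrightarrow{R}$ retains for an out-edge of $j$.
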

\begin{proof}
  We require some additional definitions. Define $X_j$ to be the
  coordinate subspace of
  $\R^{|\E|}$ corresponding to the edges out of vertex $j$. Then $V_j =
  V_{j-1}\oplus X_j$.
  Let $\pi_j:V_j\to W_j:=V_j/V_{j-1}$ be the quotient map. Since
  $\pi_j(X_j)=W_j$ and
  $X_j\cap\ker\pi_j=X_j\cap V_{j-1}=\{0\}$, the restriction
  $\pi_j|_{X_j}:X_j\to W_j$ is
  injective. Since $\dim X_j = \dim V_j - \dim V_{j-1} = \dim W_j$,
  $\pi_j|_{X_j}$ is a linear isomorphism. We also define $U_j$ to be
  the coordinate
  subspace of $\R^{dn}$ corresponding to the vertices $i$ such that
  $1\le i\le j$. Finally, we
  define $Q_j$ to be the matrix whose rows are the edge vectors
  emanating from vertex $j$.

  We first prove (b). Note that
  on $X_j$, $\overrightarrow{R}(p)^\top$ acts like $Q_j^\top$.
  On $U_j$, $R(p)$ acts like $Q_j$. From the previous observation, $Z_j
  = 2Q_jQ_j^\top$,
  which is PSD.  In particular, $Z_j$ satisfies \eqref{eq: skew}.

  Now we prove (a). Since $V_j$ is supported on edges whose tails lie in
  $\{1,\ldots,j\}$,
  and each row of $\overrightarrow{R}(p)$ is supported only on the
  coordinates of its tail vertex, we have
  $\overrightarrow{R}(p)^\top V_j\subseteq U_j$. Conversely, if $u\in
  U_j$, then $(R(p)u)_{kl}$
  can be nonzero only if at least one of $k$ or $l$ is at most $j$.
  Since the vertices are topologically ordered, the
  tail of such an edge is the endpoint with smaller index, and hence is
  also at most $j$.
  Thus $R(p)U_j\subseteq V_j$. It follows that
  $ZV_j=2R(p)\overrightarrow{R}(p)^\top V_j\subseteq V_j$, so $V_j$ is
  $Z$-invariant.

  We finish with the decoupling condition (c) using criterion (b) from
  Lemma \ref{lem: decoupling}.
  Let $v\in V_j$ and suppose that
  $Zv \in V_{j-1}$.  Using the splitting of $V_j$, we write $v = w + x$,
  with $w\in V_{j-1}$
  and $x\in X_j$.  We have $0 = \pi_j(Zv) = \pi_j(Zw) + \pi_j(Zx)$.  By
  invariance, $Zw\in V_{j-1}$, so $ \pi_j(Zw)$ vanishes.  This leaves
  $0 = \pi_j(Zx) = Z_j\pi_j(x)$.  Since the kernel of the PSD $Z_j$ is
  equal to $\ker Q_j^\top$, $\pi_j(x)\in \ker Q_j^\top$.  On $X_j$,
  $\overrightarrow{R}(p)^\top$ acts like $Q_j^\top$, and so
  $\overrightarrow{R}(p)^\top x = 0$. Hence $Zx = 0$, and so $Zv = Zw$.
  This verifies the decoupling condition, completing the proof.
\end{proof}
\begin{proof}[Proof of Theorem \ref{thm: acyclic}]
  Let $p^*$ be a generic target, and assume that the
  directed controller from $\overrightarrow{\G}$ is weakly
  admissible.  Because it is a constant, the vertex flag
  depends rationally on the node state.  The remaining
  hypotheses of
  Theorem \ref{thm: flag} are supplied by Lemma \ref{lem: vertex flag},
  since they hold pointwise, and so at $p^*$, in particular.
  Because the spectrum of $\eta^*$ is the union of the
  spectra of the PSD $Z_j$ by Lemma \ref{lem: flag}, the
  non-zero eigenvalues of $\eta^*$ have positive real parts.
  Hence, $-\eta^*|_{T_{m^*} \mathcal{Q}}$ is Hurwitz by Theorem \ref{thm: flag}.
  Since $p^*$ was arbitrary, (a) implies (d) for every generic $p^*$.  The
  implications from bottom to top are immediate for any generic $p^*$,
  so the proof is complete.
\end{proof}

We can apply the same machinery to other invariant flags associated
with a directed orientation.  The following construction generalizes
a related approach from \cite{Zhang_JDSMC2018}.
\begin{definition}[Root extension]
  Let $\overrightarrow{\G}$ be a directed graph and let
  $\overrightarrow{\mathcal H}\subseteq\overrightarrow{\G}$ be a
  vertex-induced subgraph.  We say that $\overrightarrow{\G}$ is a
  \emph{root extension} of $\overrightarrow{\mathcal H}$ if
  \begin{enumerate}[(a)]
    \item $\overrightarrow{\G}\setminus\overrightarrow{\mathcal H}$
      is acyclic; and
    \item there are no directed edges from a vertex of
      $\overrightarrow{\mathcal H}$ to a vertex of
      $\overrightarrow{\G}\setminus\overrightarrow{\mathcal H}$.
  \end{enumerate}
  If, in addition, $\overrightarrow{\mathcal H}$ is admissible and every
  vertex of $\overrightarrow{\G}\setminus \overrightarrow{\mathcal H}$
  has out-degree at least $d$, then
  $\overrightarrow{\G}$ is a \emph{stable root extension} of
  $\overrightarrow{\mathcal H}$.
\end{definition}
The construction is illustrated in Fig.~\ref{fig:acyclic_extension}.
\begin{figure}[!ht]
  \centering
\includegraphics[
  width=0.4\columnwidth
]{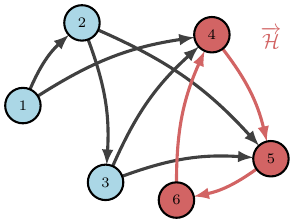}
  \caption{A stable root extension by the admissible directed subgraph
    $\overrightarrow{\mathcal H}$ (red).  The subgraph
    $\overrightarrow{\G}\setminus\overrightarrow{\mathcal H}$ is
    acyclic, all edges between the two subgraphs are directed
    toward $\overrightarrow{\mathcal H}$, and the vertices outside of
    $\overrightarrow{\G}\setminus\overrightarrow{\mathcal H}$
  have out-degree at least $d = 2$.}
  \label{fig:acyclic_extension}
\end{figure}
\begin{figure*}[t]
  \centering

  \begin{subfigure}[c]{0.28\textwidth}
    \centering
  \includegraphics[
  width=0.90\linewidth
]{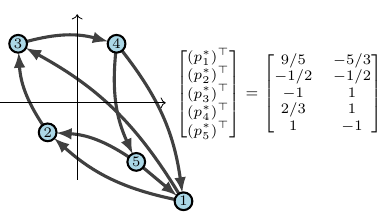}
    \caption{Persistent directed target.}
    \label{fig:persistent_bad_graph}
  \end{subfigure}
  \hfill
  \begin{subfigure}[c]{0.2\textwidth}
    \centering
    \includegraphics[width=\linewidth]
    {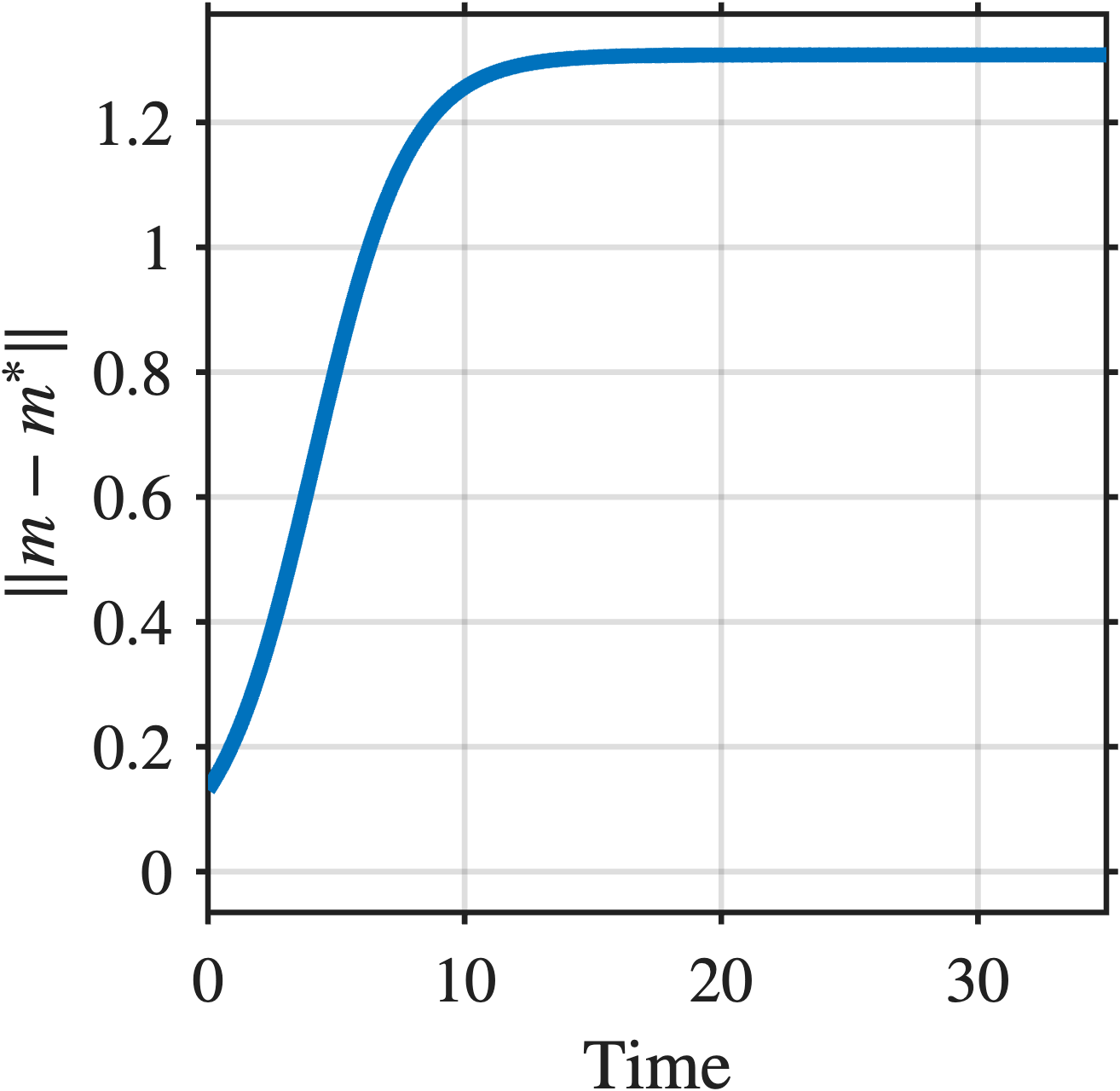}
    \caption{Edge-error norm.}
    \label{fig:persistent_bad_edge}
  \end{subfigure}
  \hfill
  \begin{subfigure}[c]{0.24\textwidth}
    \centering
    \includegraphics[width=\linewidth]
    {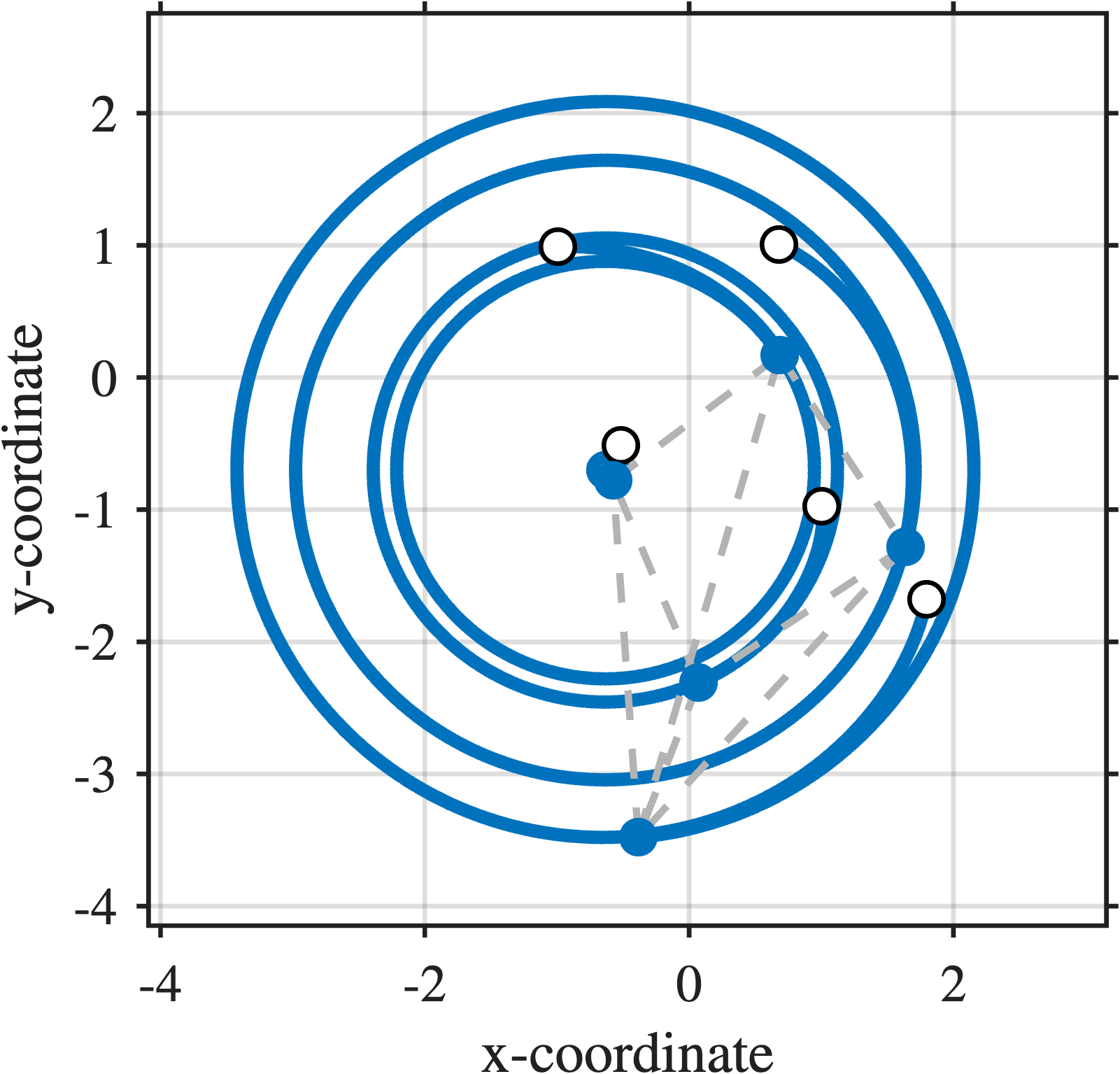}
    \caption{Node trajectories  (open circles indicate the initial positions).}
    \label{fig:persistent_bad_nodes}
  \end{subfigure}

  \caption{A persistent directed formation for which the directed
    controller is locally unstable. The orientation in
    (a) is persistent. The edge error (b) converges to a nonzero value,
    while the node
    trajectories (c) approach a limit cycle. }
  \label{fig:persistent_not_sufficient}
\end{figure*}

The definition of a root extension implies that there
is an ordering $v_1, \ldots, v_n, w_1, \ldots, w_\ell$ of the vertices
of $\overrightarrow{\G}$,
where the $v_i$ correspond to vertices of
$\overrightarrow{\G}\setminus  \overrightarrow{\mathcal{H}}$
and the $w_j$ correspond to vertices of $\overrightarrow{\mathcal{H}}$,
such that for each edge $\overrightarrow{v_iv_j}$, $i < j$ and there
are no directed
edges $\overrightarrow{w_iv_j}$.  Using such an order, which extends
a topological
order on the vertices of $\overrightarrow{\G'}$, we define a flag $V_j$ of
subspaces of $\R^{|\E|}$ by defining $V_j$ to be the coordinate
subspace corresponding
to edges with their tail in $\{v_1, \ldots, v_j\}$ for $0\le j\le n$ and
$V_{n+1} = \R^{|\E|}$.  We call this flag $V_j$ the \emph{stable
extension flag}.
\begin{theorem}[Stability of root extensions]\label{thm: stable root extension}
  Let $\overrightarrow{\G}$ be a $k$ vertex stable root extension of
  an $\ell$ vertex admissible directed graph $\overrightarrow{\mathcal{H}}$.
  Then $\overrightarrow{\G}$ is admissible.
  If for some generic target $p^*$, the non-zero eigenvalues of
  $Z_{\overrightarrow{{\mathcal{H}}}}$
  have positive real parts, where $Z_{\overrightarrow{\mathcal{H}}}$ is
  the matrix of the
  edge operator at the restriction of $p^*$ to $\overrightarrow{\mathcal{H}}$,
  then $\overrightarrow{\G}$ is strongly admissible.
\end{theorem}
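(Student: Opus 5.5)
The plan is to apply Theorem~\ref{thm: flag} to the stable extension flag $V_0\subseteq V_1\subseteq\cdots\subseteq V_n\subseteq V_{n+1}=\R^{|\E|}$. Here $k=n+\ell$, the vertices $v_1,\dots,v_n$ lie outside $\overrightarrow{\mathcal H}$, and $Z=2R(p^*)\overrightarrow R(p^*)^\top$ at a generic target $p^*$. The flag is constant, so it depends rationally on the state, and by Lemma~\ref{lem: flag} all the relevant properties may be checked at the single generic point $p^*$. Four things must be verified there: invariance of the flag, the induced maps, decoupling, and weak admissibility of $\overrightarrow{\G}$.

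\emph{Invariance and the induced maps.} The invariance argument follows the proof of Lemma~\ref{lem: vertex flag}(a). Let $U_j$ be the node coordinates of $v_1,\dots,v_j$. Each row of $\overrightarrow R$ is supported on its tail, so $\overrightarrow R^\top V_j\subseteq U_j$. If an edge meets $\{v_1,\dots,v_j\}$, its tail is in that set: an edge between two $v$'s is oriented upward in the order, and an edge between a $v$ and a $w$ is oriented toward $\overrightarrow{\mathcal H}$. Hence $R\,U_j\subseteq V_j$, and so $ZV_j\subseteq V_j$.

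For $1\le j\le n$, the same computation as in Lemma~\ref{lem: vertex flag}(b) gives $Z_j=2Q_jQ_j^\top$, where $Q_j$ has rows $p_{v_j}-p_u$ over the out-neighbours $u$ of $v_j$. This map is PSD, so it satisfies \eqref{eq: skew}. Since $v_j$ has out-degree at least $d$ and $p^*$ is generic, $\rk Z_j=d$.

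For the top quotient, $\R^{|\E|}/V_n$ is identified with the coordinates of the edges with tail in $\overrightarrow{\mathcal H}$. By condition (b) of the definition of a root extension, these are exactly the edges of $\overrightarrow{\mathcal H}$. Restricting to rows and columns of $\mathcal H$, the induced map $Z_{n+1}$ is the edge operator $Z_{\overrightarrow{\mathcal H}}$ at $p^*|_{\mathcal H}$, which is generic. Admissibility of $\overrightarrow{\mathcal H}$ implies weak admissibility, so $\rk Z_{n+1}=d\ell-\binom{d+1}{2}$. Theorem~\ref{thm: weak to strong} then shows that $Z_{n+1}$ satisfies \eqref{eq: skew}.

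\emph{Decoupling and weak admissibility.} This is the step I expect to be the main obstacle. For $j\le n$, criterion (b) of Lemma~\ref{lem: decoupling} could be checked as in Lemma~\ref{lem: vertex flag}(c), but the top level $j=n+1$ is not of that form. I would therefore avoid the level-by-level argument and use a rank count instead. The unconditional inequality gives
\[
\rk Z\ \ge\ \sum_{j=1}^{n+1}\rk Z_j \;=\; dn + d\ell-\tbinom{d+1}{2} \;=\; dk-\tbinom{d+1}{2}.
\]
Compatibility gives $\IM Z\subseteq T_{m^*}\mathcal Q$, which has dimension $dk-\binom{d+1}{2}$. Hence equality holds throughout. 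This gives weak admissibility of $\overrightarrow{\G}$, and, by criterion (d) of Lemma~\ref{lem: decoupling}, the decoupling condition for the flag.

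\emph{Conclusion.} Theorem~\ref{thm: flag} now applies: every induced map satisfies \eqref{eq: skew}, so $\overrightarrow{\G}$ is admissible. For the second claim, Lemma~\ref{lem: flag} says the spectrum of $Z$ is the union of the spectra of the PSD maps $Z_j$ ($j\le n$) and of $Z_{\overrightarrow{\mathcal H}}$. Under the hypothesis on $Z_{\overrightarrow{\mathcal H}}$, every nonzero eigenvalue of $Z$ therefore has positive real part. The final clause of Theorem~\ref{thm: flag} then shows that $-\eta^*|_{T_{m^*}\mathcal Q}$ is Hurwitz and that $\overrightarrow{\G}$ is strongly admissible.
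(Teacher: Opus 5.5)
Your proposal is correct and follows the paper's proof essentially step for step. The shared steps are the stable extension flag, $Z_j=2Q_jQ_j^\top$ of rank $d$ for $j\le n$, and $Z_{n+1}=Z_{\overrightarrow{\mathcal H}}$ satisfying \eqref{eq: skew} by Theorem~\ref{thm: weak to strong}. You then compare the unconditional rank inequality with $\IM Z\subseteq T_{m^*}\mathcal Q$ to get weak admissibility and decoupling via Lemma~\ref{lem: decoupling}(d), and finish with Theorem~\ref{thm: flag} and Lemma~\ref{lem: flag}. The only nuance is that the paper uses only the upper bound $\dim T_{m^*}\mathcal Q\le dk-\binom{d+1}{2}$, and so obtains $d$-rigidity of $\G$ as a byproduct, whereas you take that dimension as given.
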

\begin{proof}
  Let $D = \binom{d+1}{2}$.  Let $n = k - \ell$, and order the 
  vertices of $\overrightarrow{\G}$ as 
  $v_1, \ldots, v_n, w_1, \ldots, w_\ell$ with the conditions 
  as discussed above. 
  With an eye towards applying Theorem \ref{thm: flag}, we first observe that,
  as a constant flag, the stable extension flag depends rationally
  on the state.  It is also a flag of $Z$ invariant subspaces, because it
  agrees with the vertex flag up to $V_n$, so the proof of invariance
  applies verbatim, except for $V_{n+1}$, where invariance is trivial.

  Next we look at the induced maps $Z_j$.   For $1\le j\le n$,
  $Z_j = 2Q_jQ_j^\top$, as in the proof of Lemma \ref{lem: vertex flag},
  with the same argument.
  Since  every vertex outside $\overrightarrow{\mathcal{H}}$ has out-degree at least
  $d$, generically, $\rk Z_j = d$ for $1\le j\le n$.
  We also get $Z_{n+1} = Z_{\overrightarrow{\mathcal{H}}}$,
  since, by construction, $\overrightarrow{\mathcal{H}}$ has no out edges.
  By admissibility and Theorem \ref{thm: weak to strong},
  $Z_{\overrightarrow{\mathcal{H}}}$ satisfies \eqref{eq: skew}.

  From the general rank inequality  $\rk Z \ge \rk Z_1 + \cdots + \rk Z_{n+1} =
  dn + \rk Z_{\overrightarrow{\mathcal{H}}}$.  Since
  $\overrightarrow{\mathcal{H}}$
  has $k - n$ vertices and $\overrightarrow{\mathcal{H}}$ is admissible, generically,
  $\rk Z \ge dk - D$.  Since $\IM Z \subseteq T_{m^*}\mathcal{Q}$, which
  has dimension at most $dk - D$, we conclude both that $\overrightarrow{\G}$
  is $d$-rigid and that equality holds.  This shows that
  $\overrightarrow{\G}$ is
  weakly admissible, and, by Lemma \ref{lem: decoupling} (d) that the
  decoupling condition holds.

  At this point, we have established all the hypotheses of Theorem
  \ref{thm: flag},
  so we  conclude that $\overrightarrow{\G}$ is admissible.  If, at some
  generic target the non-zero eigenvalues
  of  $Z_{\overrightarrow{\mathcal{H}}}$ have positive real parts, the same is
  true for $Z$ because the other $Z_j$ are PSD, using Lemma \ref{lem:
  flag}.  The
  strong admissibility conclusion then follows.
\end{proof}
As a last application of vertex flags, we prove a combinatorial
theorem about weakly admissible acyclic orientations.
We define the \emph{defect}  of vertex $i$ as $\delta_i:=\max\{0,d - \overrightarrow{\deg}(i)\}$,
and the \emph{defect of an acyclic graph} to be the sum of its vertex defects.

\begin{theorem}[Admissible acyclic orientations characterization]\label{thm: weak admiss acyclic}
  Let $\overrightarrow{\G}$ be an acyclic
  orientation of an $n\ge d$ vertex $d$-rigid graph $\G$.
  Then the following are equivalent:
  \begin{enumerate}[(a)]
    \item $\overrightarrow{\mathcal{G}}$ is admissible;
    \item the defect of $\overrightarrow{\mathcal{G}}$ is exactly $\binom{d+1}{2}$;
    \item $\overrightarrow{\mathcal{G}}$ is formed from a directed complete $d$ vertex subgraph $\overrightarrow{\mathcal{H}}$ by a sequence of stable root extensions.
  \end{enumerate}
\end{theorem}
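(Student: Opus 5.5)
The plan is to prove (a)$\Leftrightarrow$(b) by computing the generic rank of $Z$ through the vertex flag, and then to get (b)$\Leftrightarrow$(c) from a counting argument on a topological order. By Theorem~\ref{thm: acyclic}, admissibility of an acyclic orientation is equivalent to weak admissibility. So (a) holds if and only if $\rk Z = dn-\binom{d+1}{2}$ at a generic target.

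Fix a topological order and the vertex flag $V_j$. By Lemma~\ref{lem: vertex flag}(c) the vertex flag satisfies the decoupling condition, so Lemma~\ref{lem: decoupling}(d) gives
\[
\rk Z=\sum_{j=1}^n \rk Z_j=\sum_{j=1}^n \rk Q_jQ_j^\top=\sum_{j=1}^n \rk Q_j .
\]
The rows of $Q_j$ are the edge vectors $p_k-p_j$ to the $\overrightarrow{\deg}(j)$ distinct out-neighbours of $j$. At a generic $p^*$ these vectors are in general position, so $\rk Q_j=\min\{d,\overrightarrow{\deg}(j)\}=d-\delta_j$. Hence $\rk Z = dn-\sum_j\delta_j$ generically, and weak admissibility is exactly the statement that the defect equals $\binom{d+1}{2}$. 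This proves (a)$\Leftrightarrow$(b). As a byproduct, the inequality $\rk Z\le \dim T_{m^*}\mathcal Q$ shows that the defect of any acyclic orientation of a $d$-rigid graph is at least $\binom{d+1}{2}$.

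For (b)$\Rightarrow$(c), use the topological order $1,\dots,n$, in which every edge goes from a smaller to a larger index. Vertex $j$ can only point to vertices $>j$, so $\overrightarrow{\deg}(j)\le n-j$. Consequently $\delta_j\ge d-(n-j)$ for each of the last $d$ vertices, and this uses $n\ge d$. Summing these lower bounds over the last $d$ vertices gives $\sum_{k=0}^{d-1}(d-k)=\binom{d+1}{2}$. If the defect is exactly $\binom{d+1}{2}$, every inequality is tight, which forces two things:
\begin{enumerate}[(i)]
\item each of the last $d$ vertices points to all later vertices, so they induce a transitively oriented complete graph $\overrightarrow{\mathcal H}$ on $d$ vertices;
\item every earlier vertex has out-degree at least $d$.
\end{enumerate}
Next, $\overrightarrow{\mathcal H}$ is admissible. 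It is acyclic, $K_d$ is $d$-rigid, and its defect is $\binom{d+1}{2}$, so the already-proved (b)$\Rightarrow$(a) applies to it. We then add the earlier vertices one at a time in reverse topological order. Each newly added vertex has all of its out-edges pointing into the already-built graph, it has out-degree at least $d$, and it receives no edges from that graph. So each step is a stable root extension of an admissible graph, and it remains admissible by Theorem~\ref{thm: stable root extension}.

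For (c)$\Rightarrow$(a), we induct with Theorem~\ref{thm: stable root extension}. The base case, admissibility of the directed complete $d$-vertex graph, is covered by the same (b)$\Rightarrow$(a) argument, once we note that a directed complete $d$-vertex subgraph is necessarily acyclic here because $\overrightarrow{\G}$ is acyclic. Alternatively, one checks directly, by the defect count on the complete graph, that (c) produces an acyclic graph with defect $\binom{d+1}{2}$.

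The main obstacle I expect is the genericity bookkeeping. We need $\rk Q_j=\min\{d,\overrightarrow{\deg}(j)\}$ to hold simultaneously for all $j$ at a single generic $p^*$. This is fine because each condition is the nonvanishing of a minor that is not identically zero, and there are finitely many such minors. We also need the rank identity of Lemma~\ref{lem: decoupling}(d), which holds pointwise for every target. A secondary point is verifying the root-extension conditions, in particular that the subgraph being extended at each step is vertex-induced, when the vertices are added singly.
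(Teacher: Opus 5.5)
Your proposal is correct and follows essentially the same route as the paper. For (a)$\Leftrightarrow$(b) you use the vertex-flag decoupling to get $\rk Z=\sum_j \rk Q_j = dn-\sum_j\delta_j$, and for (b)$\Leftrightarrow$(c) you use tightness of the topological-order defect count. The only deviations are harmless: you route (c)$\Rightarrow$(a) through Theorem~\ref{thm: stable root extension}, and you explicitly check that the base transitive $K_d$ is admissible, which the paper leaves implicit. Also, a single stable root extension of the last $d$ vertices already suffices, so you do not need to add vertices one at a time.
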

\begin{proof}
  For convenience, set $D = \binom{d+1}{2}$.
  Fix any topological ordering $v_1,\ldots,v_n$ of the vertices of 
  $\overrightarrow{\mathcal{G}}$, and let $V_1,\ldots,V_n$ be the associated vertex flag.
  In a topological ordering, the out-neighbors of any vertex appear after it.
  In particular, each vertex $v_{n-k}$ for $0\le k\le d-1$ has at most
  $k$ out neighbors. 
  Summing over $k$, the total vertex defect in
  any acyclic orientation is at least $D$.  Moreover, defect
  $D$ is attained if and only if each of these $v_{n-k}$ for $0\le k\le d-1$ 
  have out-degree exactly $k$ and all other defects are zero.
  From this it follows that conditions (b) and (c) are equivalent.

  Meanwhile, by Lemma \ref{lem: vertex flag} the vertex
  flag satisfies the decoupling condition.  Hence,
  from Lemma \ref{lem: decoupling} (d),
  generically $\rk Z = \rk Z_1 + \cdots + \rk Z_n$.
  In the notation of the proof of Lemma
  \ref{lem: vertex flag}, $Z_j = 2Q_jQ_j^\top$, which
  generically has rank $d -\delta_i$. Since each $Z_j$ is positive semidefinite, 
  condition~\eqref{eq: skew} holds for each induced map.  
  Hence, $\overrightarrow{\G}$ is weakly admissible, and
  hence, from Theorem \ref{thm: flag} admissible, if and only if
  the total defect is $D$.
\end{proof}

Theorems \ref{thm: acyclic} and \ref{thm: weak admiss acyclic} together provide an explanation for the ``leader first-follower'' directed control 
architecture appearing in works such as \cite{Anderson_CDC2007} and 
\cite{Yu_SIAMJCO2009}.  
Theorem \ref{thm: weak admiss acyclic} shows that
the defect is exactly $D$ if and only if the directed graph has a 
(generalized) leader first-follower structure.  We note that, 
in dimensions $d\ge 2$ there are generically rigid bipartite 
graphs.  By Theorem \ref{thm: weak admiss acyclic}, 
these cannot have acyclic admissible orientations, so all the 
orientations supplied by Theorem \ref{thm: admissible orientations}
must contain directed cycles.


\subsection{Comparison to persistence}

\begin{figure*}[!t]
  \centering

  \begin{subfigure}[c]{0.28\textwidth}
    \centering
   \includegraphics[
  width=0.90\linewidth
]{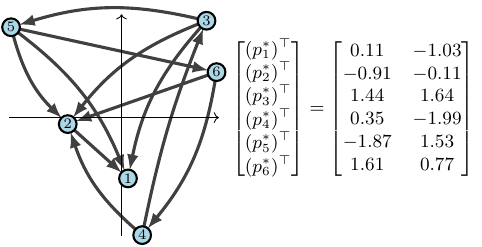}
    \caption{Directed target.}
    \label{fig:ex7_graph}
  \end{subfigure}
  \hfill
  \begin{subfigure}[c]{0.18\textwidth}
    \centering
    \includegraphics[width=\linewidth]
    {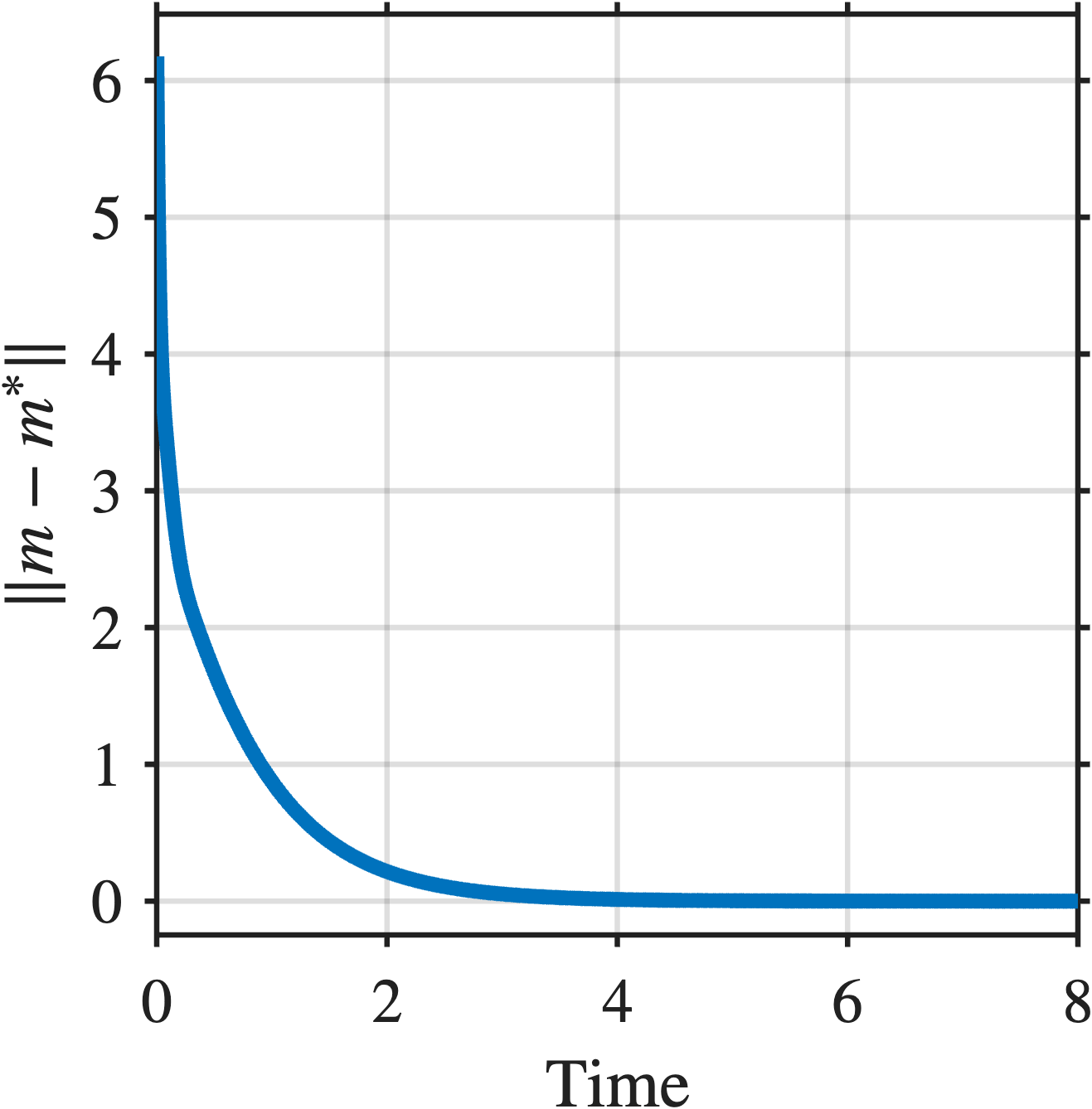}
    \caption{Edge-error norm.}
    \label{fig:ex7_edge}
  \end{subfigure}
  \hfill
  \begin{subfigure}[c]{0.22\textwidth}
    \centering
    \includegraphics[width=\linewidth]
    {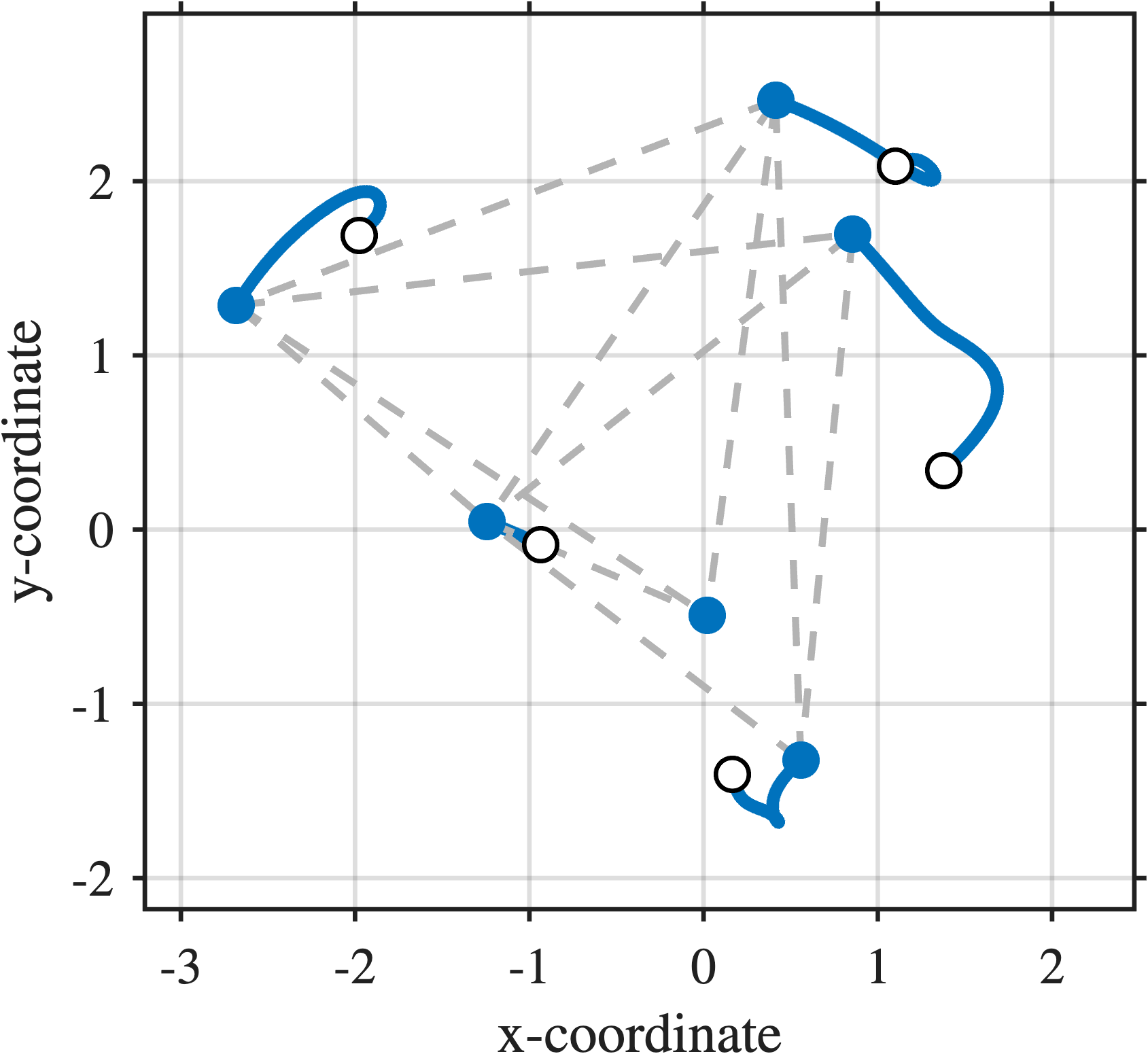}
    \caption{Node trajectories  (open circles indicate the initial positions).}
    \label{fig:ex7_nodes}
  \end{subfigure}

  \caption{
  Non-persistent orientation of Example~\ref{ex:persistence-not-necessary}.
The edge error converges to zero and the node trajectories converge to
a congruent realization.
  }
  \label{fig:nonpersistent_convergence}
\end{figure*}

We first compare admissibility with the classical notion of persistence \cite{Hendrickx_IJRNC2007}. 
For acyclic orientations, the two notions coincide.
\begin{corollary}[Acyclic persistent orientations]\label{cor: admiss equiv persist}
   An acyclic directed graph is admissible if and only if it is persistent.
\end{corollary}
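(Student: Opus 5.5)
The plan is to reduce both sides to the combinatorial out-degree condition in Theorem~\ref{thm: weak admiss acyclic}(b). We then invoke the known combinatorial characterization of persistence from \cite{Hendrickx_IJRNC2007}.

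Recall the relevant facts from that work. A directed graph is persistent if and only if it is rigid and every subgraph obtained by removing out-edges from vertices of out-degree larger than $d$ (until they have out-degree exactly $d$) is rigid. For acyclic orientations there is a simpler description: an acyclic graph is persistent if and only if it is rigid and contains a minimally persistent subgraph with the leader--first-follower structure. That structure means exactly one vertex of out-degree $0$, one of out-degree $1$, and so on up to one of out-degree $d-1$, with all remaining vertices of out-degree at least $d$; this is the "constraint consistence plus rigidity" criterion.

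\emph{Admissible implies persistent.} Suppose $\overrightarrow{\G}$ is acyclic and admissible. By Theorem~\ref{thm: weak admiss acyclic}, its defect equals $\binom{d+1}{2}$. The proof of that theorem shows this happens if and only if the last $d$ vertices of a topological order have out-degrees $0,1,\ldots,d-1$ and every other vertex has out-degree at least $d$.

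Next, discard surplus out-edges from each vertex so that its out-degree is capped at $d$. Any such pruned subgraph is still acyclic and still has defect $\binom{d+1}{2}$. In the proof of Theorem~\ref{thm: stable root extension}, the rank bound $\rk Z\ge dn+\rk Z_{\overrightarrow{\mathcal H}}$ forces generic $d$-rigidity. Applying that argument via Theorem~\ref{thm: weak admiss acyclic}(c), starting from a complete $d$-vertex root, shows the pruned subgraph is generically $d$-rigid. Hence every "out-degree-$d$ pruning" is rigid, which is the persistence criterion of \cite{Hendrickx_IJRNC2007}.

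\emph{Persistent implies admissible.} Suppose the acyclic orientation is persistent. Then every vertex with out-degree $\ge d$ can be pruned to out-degree $d$ while keeping rigidity. A rigid acyclic subgraph whose out-degrees are capped at $d$ has at most $d$ out-edges per vertex. Its defect is therefore at least $\binom{d+1}{2}$, and its edge count is $dn-\text{defect}$. Rigidity needs at least $dn-\binom{d+1}{2}$ edges, so the defect is at most $\binom{d+1}{2}$. Combined with the lower bound from the proof of Theorem~\ref{thm: weak admiss acyclic}, the defect is exactly $\binom{d+1}{2}$. Pruning does not change the defect, so the original graph also has defect $\binom{d+1}{2}$. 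Theorem~\ref{thm: weak admiss acyclic} then gives admissibility.

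The main obstacle is pinning down precisely which characterization of persistence from \cite{Hendrickx_IJRNC2007} to cite, and matching its conventions to ours. In particular, we must check that it uses the same edge orientation convention (the tail of an edge is responsible for that constraint) and the same handling of small cases, such as $n$ close to $d$. The edge-counting step itself is routine.
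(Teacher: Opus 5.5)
Your proposal is correct and follows the paper's proof. Both directions reduce to the defect condition of Theorem~\ref{thm: weak admiss acyclic}(b), using that capping out-degrees at $d$ preserves the defect and, for the converse, comparing the pruned graph's edge count $dn-\text{defect}$ with the rigidity lower bound $dn-\binom{d+1}{2}$. Your one small deviation is how you get rigidity of the pruned graph. You take it from the rank bound in the proof of Theorem~\ref{thm: stable root extension}, whereas the paper takes it from admissibility of the pruned graph. This is the same vertex-flag rank count, and it has the minor advantage of not presupposing the $d$-rigidity hypothesis of Theorem~\ref{thm: weak admiss acyclic}.
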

\begin{proof}
    Suppose that $\G$ is admissible and acyclic. By Theorem \ref{thm: weak admiss acyclic}, 
    $\G$ contains a $K_d$ subgraph carrying the total defect $D$ and every other vertex 
    has degree at least $d$.  Removing edges with their tail at a vertex of degree 
    at least $d+1$ preserves this property, so removing edges until all degrees are at 
    most $d$ preserves admissibility (using Theorem \ref{thm: weak admiss acyclic}) 
    and, hence, rigidity of the underlying graph.  This shows that $\G$ is persistent.

    Now suppose that $\G$ is persistent. Remove edges until every vertex has degree 
    at most $d$.  The graph that is left is minimally rigid, so, by counts, it has defect 
    exactly $D$.  In an acyclic orientation, this is the minimal defect, so the defect of 
    $\G$ is exactly $D$.  By Theorem \ref{thm: weak admiss acyclic}, $\G$ is admissible.
\end{proof}

For $d=2$, Corollary \ref{cor: admiss equiv persist}, together with Theorem \ref{thm: weak admiss acyclic}, recovers the classical characterization of persistent acyclic graphs: there is a leader of out-degree zero, a first follower of out-degree one, and every remaining vertex has out-degree at least two \cite[Thm.~5]{Hendrickx_IJRNC2007}. 

The numerical characterization developed thus far provides a
target-dependent test for local convergence that is independent of
the notion of persistence \cite{Hendrickx_IJRNC2007}. Persistence is
a structural property of a
directed formation, as it characterizes whether the directed distance
constraints are compatible with
maintaining a rigid shape. It does not, however, characterize the
stability of a particular
feedback law. Indeed, previous stability results for persistent
formations have required either
controller design or additional restrictions on the directed
architecture \cite{Yu_SIAMJCO2009,Zhang_JDSMC2018}.

\begin{theorem}\label{thm:persistence}
  Persistence is neither necessary nor sufficient for local exponential
  stability of the directed  controller \eqref{dir_formation_ctr}.
\end{theorem}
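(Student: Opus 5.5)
This is a negative statement, so I will prove it with two explicit counterexamples, one for each direction. Each is certified through Theorem~\ref{thm: convergence certificates}: local exponential stability of the directed controller \eqref{dir_formation_ctr} at a regular target $p^*$ holds if and only if $-A$ is Hurwitz, where $A=P^\top ZP$ and $Z=2R(p^*)\overrightarrow R(p^*)^\top$. Both claims therefore reduce to finite spectral computations at chosen targets. By Corollary~\ref{cor: admiss equiv persist} and Theorem~\ref{thm: acyclic}, acyclic orientations cannot separate persistence from stability at generic targets. Both examples must therefore contain directed cycles.

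\emph{Persistence is not sufficient.} I take the persistent orientation and target of Figure~\ref{fig:persistent_not_sufficient}, in $d=2$. First I verify persistence combinatorially with the characterization of \cite{Hendrickx_IJRNC2007}: the underlying graph is rigid, and after deleting outgoing edges so that every out-degree is at most $2$, the result is still rigid. Next I compute $A$ at the displayed target and exhibit an eigenvalue with $\operatorname{Re}\lambda\le 0$; more robustly, I show $-A$ is not Hurwitz. The necessity part of Theorem~\ref{thm: convergence certificates} then rules out local exponential stability. It is cleaner to choose the target generic, for instance with rational coordinates perturbed slightly, and to check that the bad eigenvalue has strictly negative real part. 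Strict negativity survives small perturbations, so instability holds on an open set of targets and is not an artifact of degeneracy. The simulation (edge error tending to a nonzero limit, nodes approaching a limit cycle) serves only as illustration.

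\emph{Persistence is not necessary.} I use the non-persistent orientation of Example~\ref{ex:persistence-not-necessary} (Figure~\ref{fig:nonpersistent_convergence}). Non-persistence is shown combinatorially: some vertex has out-degree greater than $d$, and I exhibit a deletion of its surplus outgoing edges that leaves a flexible graph. This violates the persistence criterion, since persistence requires every such reduction to remain rigid. At the chosen target I compute $\min\operatorname{Re}\lambda(A)>0$, or better, that $\tfrac12(A+A^\top)\succ0$. Either way, Theorem~\ref{thm: convergence certificates} gives local exponential stability. A positive definite or Hurwitz certificate at one target also extends by continuity to a neighborhood of targets.

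The main obstacle is finding the first example: an orientation that is persistent yet unstable at a generic target. I would search numerically over cyclic persistent orientations of small rigid graphs, such as wheels and Laman graphs on $5$--$6$ vertices, sampling random targets and recording $\min\operatorname{Re}\lambda(A)$. Examples~\ref{ex:wheel_hurwitz_not_pd} and \ref{ex:wheel_not_hurwitz_not_pd} show that a single directed wheel already changes stability with the target, which suggests the search will succeed. For the final write-up, the numerical eigenvalue sign should be backed by exact or interval arithmetic at a rational target.
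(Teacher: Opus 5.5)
Your proposal is correct and follows the paper's proof, which consists of exactly these two counterexamples: the persistent directed wheel of Example~\ref{ex:persistence-not-sufficient}, where $\min\operatorname{Re}\lambda(A)\approx-0.522<0$ rules out stability by the necessity part of Theorem~\ref{thm: convergence certificates}, and the non-persistent orientation of Example~\ref{ex:persistence-not-necessary}, where $-A$ is Hurwitz. The only difference is how non-persistence is certified: the paper shows no minimally persistent spanning subgraph exists (so the graph is not constraint consistent), while your strict-min-subgraph route also works, because reducing both out-degree-$3$ vertices $3$ and $5$ to out-degree $2$ leaves $8<2n-3=9$ edges, which is automatically flexible.
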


The proof is given through the construction of two counter examples given below.
\begin{example}[Persistence is not sufficient]
  \label{ex:persistence-not-sufficient}
  Consider the persistent directed wheel and target framework shown in
  Figure~\ref{fig:persistent_bad_graph}.
  The underlying undirected wheel graph is generically rigid in dimension $2$, 
  and the out-degrees are $(2,1,1,2,2)$, so the orientation is persistent. For
  this target, computation gives   
  $ \min \operatorname{Re}\lambda(A)\approx -0.522<0.$
  Hence $-A$ is not Hurwitz, and by
  Theorem~\ref{thm: convergence certificates} the target is not locally
  exponentially stable under the directed controller. 
  Figures~\ref{fig:persistent_bad_edge}-\ref{fig:persistent_bad_nodes} illustrate 
  the resulting local instability using an initial perturbation along 
  an unstable mode of the linearization.  
\end{example}
We note that the orientation in Figure~\ref{fig:persistent_bad_graph} is 
strongly admissible (certified by the given configuration), 
and that, for an open set of targets (not shown), $-A$ is Hurwitz.
The second example shows that 
local exponential stability at a generic target does not imply 
persistence. 
\begin{example}[Persistence is not necessary]
  \label{ex:persistence-not-necessary}
  Consider the non-persistent directed graph and target framework shown
  in Figure~\ref{fig:ex7_graph}.
  The underlying undirected graph is rigid, but the orientation is not
  persistent. Indeed, vertices $3$ and $5$ both have out-degree three,
  while the graph has $10=2n-2$ edges. Any minimally persistent
  spanning subgraph would therefore be obtained by removing a single
  edge and, in the plane, would require every vertex to have out-degree
  at most two \cite{Hendrickx_IJRNC2007}. Since removing one edge can
  reduce the out-degree of at most one of vertices $3$ and $5$, no
  minimally persistent spanning subgraph exists. Thus the directed
  graph is not constraint consistent, and hence is not
  persistent~\cite{Hendrickx_IJRNC2007}.

  Although the orientation fails the
  persistence criterion, the directed
  controller converges locally to the desired edge measurements, as
  shown in Figure~\ref{fig:ex7_edge}; one can also 
  check computationally that $-A$ is Hurwitz. Consequently, the 
  controller is locally exponentially stable at the target
  (Figure~\ref{fig:ex7_nodes}), in accordance with
  Theorem~\ref{thm:edge-to-node}. Thus, persistence is not necessary
  for local exponential stability of the directed distance-based controller.
\end{example}


\section{Edge Weight Design via SDP}\label{sec.sdp}
\begin{figure*}[t]
  \centering

  \begin{subfigure}[c]{0.28\textwidth}
    \centering
 \includegraphics[
  width=0.90\linewidth
]{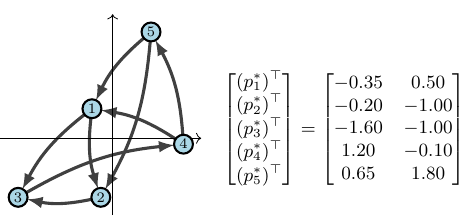}
    \caption{Directed target.}
    \label{fig:weight_design_graph}
  \end{subfigure}
  \hfill
  \begin{subfigure}[c]{0.15\textwidth}
    \centering
    \includegraphics[width=\linewidth]
    {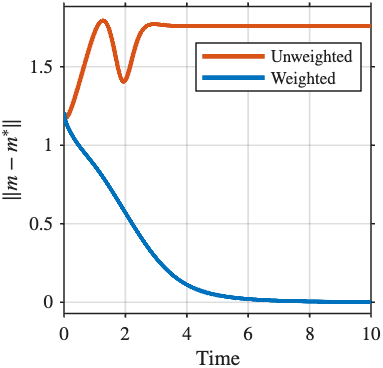}
    \caption{Edge-error norm.}
    \label{fig:weight_design_edge}
  \end{subfigure}
  \hfill
  \begin{subfigure}[c]{0.18\textwidth}
    \centering
    \includegraphics[width=\linewidth]
    {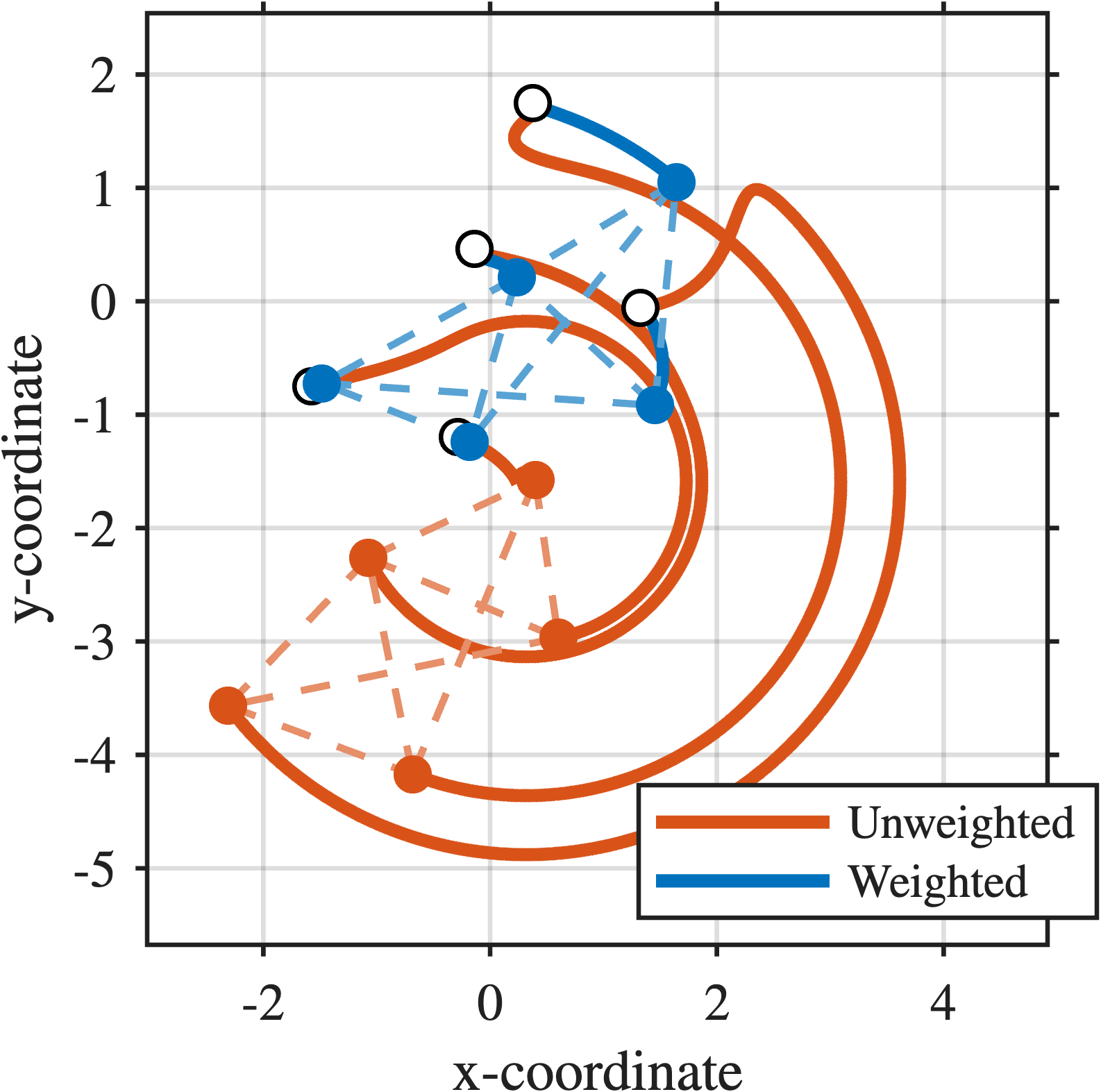}
    \caption{Node trajectories .}
    \label{fig:weight_design_nodes}
  \end{subfigure}

  \caption{
  Edge-weight synthesis (Example~\ref{ex:edge-weight-design}). The unweighted controller (red) is locally unstable; 
  the SDP-designed controller (blue) satisfies the quadratic certificate and converges.
  }
  \label{fig:edge-weight-design}
\end{figure*}
We have seen up to now that for directed sensing architectures, local
stability of a compatible controller can depend on both the target
geometry and the chosen orientation of the graph.  The stability
certificates of Theorem \ref{thm: convergence certificates}, on the
other hand, suggest that for a fixed target and graph orientation, it
may still be possible to shape the dynamics to recover desirable
properties.  A natural approach to accomplish this is to introduce a
scaled feedback law where each directed measurement is assigned an
independent gain.  


In this direction, we begin with the
compatible directed formation controller presented in
\eqref{dir_agent_grad_control}. We now associate a scalar gain
$w_{\overrightarrow{ij}}$ with each directed edge
$\overrightarrow{ij}$.  The corresponding weighted controller is
\[
  u_i = \sum_{\overrightarrow{ij}\in\E}
  w_{\overrightarrow{ij}}(\|p_i-p_j\|^2-m^*_{ij})(p_j-p_i).
\]
Thus, the weighting changes only the magnitude of each existing
feedback term while the sensing architecture and the information
available to each agent remain unchanged.

Let $W=\operatorname{diag}(w)\in\mathbb R^{|\E|\times |\E|}$ collect
the edge weights in the ordering used for the rows of
$\overrightarrow R(p)$.  The weighted controller can then be written
in aggregate form as
\[
  \dot p = -\overrightarrow{R}(p)^\top W(R(p)p-m^*).
\]
This defines a compatible formation controller in the sense of
Definition~\ref{def.compatform}. Indeed, setting
\[
  \nu(p,m)=\overrightarrow{R}(p)^\top W, \text{ and }
  \eta(p,m)=2R(p)\overrightarrow R(p)^\top W,
\]
gives $\eta(p,m)=2R(p)\nu(p,m)$. The corresponding edge dynamics are therefore
\[
  \dot m= 2R(p)\overrightarrow{R}(p)^\top W(m^*-m).
\]
For a fixed target $(p^*,m^*)$, let $Z:=2R(p^*)\overrightarrow
R(p^*)^\top$, as in Section \ref{sec.directed}.  If the columns of
$P$ form an orthonormal basis
for $T_{m^*}\mathcal Q$, then the linearization of the weighted edge
dynamics is
\[
  \dot{\delta m}=-A_W\delta m,
  \text{ with }
  A_W:=P^\top ZWP.
\]
By Theorem~\ref{thm: convergence certificates}, the weighted
controller is locally exponentially stable if $-A_W$ is Hurwitz.
A sufficient condition is $ \tfrac12(A_W+A_W^\top)\succ0$. Since
 $ \frac12(A_W+A_W^\top) =  \tfrac12  P^\top\left(ZW+WZ^\top\right)P$,
this condition is affine in the edge weights. 
%

%
Since the matrix inequality is affine in the optimization variables $w$, the
design problem can be formulated as the semidefinite program
\begin{equation}
  \begin{aligned}
    \max_{w,\gamma}\quad &\gamma\\
    \text{subject to}\quad
    &P^\top\!\left(ZW+WZ^\top\right)P
    \succeq
    \gamma I,\\
    & {\mathbf 1}^\top w = |\E|, \; W=\operatorname{diag}(w)\succeq 0.
  \end{aligned}
  \label{eq:weight_sdp}
\end{equation}
We restrict attention here to nonnegative edge gains and impose
$\mathbf{1}^\top w=|\E|$ to remove the arbitrary scaling of
the controller gains. The nonnegativity constraint is not required by the
stability condition itself. If signed edge weights are allowed, a
sign-independent normalization,
such as $\|w\|_1\leq |\E|$, may be used instead.

The optimal value $\gamma^\star$ gives the largest quadratic
stability margin under this gain normalization.
In particular, if $\gamma^\star>0$, then the synthesized edge weights
satisfy the sufficient condition of
Theorem~\ref{thm: convergence certificates} (b), and the corresponding
directed controller locally
exponentially stabilizes the target edge state.

We emphasize that infeasibility of
\eqref{eq:weight_sdp} does not preclude local exponential stability.
Rather, it indicates only that the stronger Lyapunov certificate of
Theorem~\ref{thm: convergence certificates} (b) cannot be established
through any choice of
\emph{diagonal} edge weights. The controller may nevertheless satisfy the
Hurwitz condition
of Theorem~\ref{thm: convergence certificates} (a).  In this sense,
\eqref{eq:weight_sdp} should be viewed as a constructive test for the
stronger quadratic Lyapunov certificate,
complementing the existence results of \cite{Yu_SIAMJCO2009} for
stabilizing diagonal weight matrices.
We conclude this section with an example.
\begin{example}[Stabilization by edge-weight design]
  \label{ex:edge-weight-design}
  Consider the directed target framework shown in
  Figure~\ref{fig:weight_design_graph}.
  With unit edge weights, the reduced operator  $A=P^\top ZP$ satisfies
  $\min \operatorname{Re}\lambda(A)=-0.1156<0$ and
  $\lambda_{\min}\!\left(\frac12(A+A^\top)   \right)=-0.9434$, so the
  unweighted directed controller is locally unstable and does not
  satisfy the quadratic certificate.

  We therefore solve the semidefinite program~\eqref{eq:weight_sdp}.
  The optimal value is $\gamma^\star=0.4926>0$, with edge weights
  \[
    {\small
      \setlength{\arraycolsep}{2.5pt}
      w^\star \approx
      \begin{bmatrix}
        0.244 & 0.110 & 1.058 & 2.665 &
        0.388 & 0.245 & 0.195 & 3.096
      \end{bmatrix}^{\!\top}.
    }
  \]
  For the resulting weighted operator $A_{W^\star}=P^\top ZW^\star P$,
  we obtain $\min \operatorname{Re}\lambda(A_{W^\star})
  =0.6993>0$ and $\lambda_{\min}\!\left(
    \frac12(A_{W^\star}+A_{W^\star}^\top)
  \right)
  =0.4926>0$. Hence the optimized weights satisfy the quadratic certificate
  of Theorem~\ref{thm: convergence certificates} (b) and locally
  exponentially stabilize the target.

  Figure~\ref{fig:weight_design_edge} compares the edge-error
  trajectories from the same initial condition, while
  Figure~\ref{fig:weight_design_nodes} shows the corresponding node
  trajectories. The unweighted controller moves away from the target
  and enters a limit cycle, whereas the weighted controller converges
  to a realization congruent to it.
\end{example}

\section{Further directions}\label{sec.conclusion}

Several questions remain open. The most immediate is to characterize when diagonal gains $W$ 
exist for which $-P^\top ZWP$ is Hurwitz, since our examples show Hurwitz stability 
can hold even when the quadratic certificate $P^\top(ZW+WZ^\top)P\succ0$ fails. It is also
natural to ask whether such gains admit a graph-theoretic characterization, 
and for which graph classes the Hurwitz and quadratic notions of stabilizability coincide. 
The other question of dynamical interest is whether, for every strongly admissible 
compatible controller, there is a non-empty open set of target configurations $p^*$ on which 
$-\eta^*|_{T_{m^*} \mathcal{Q}}$ is Hurwitz.

There are also several open questions concerning admissibility of 
directed controllers and directed orientations.  The most 
immediate is a combinatorial characterization of admissible and 
strongly admissible orientations of $d$-rigid graphs.  It would 
also be interesting to know for which directed graph classes
all the admissibility conditions coincide.


\setlength{\itemsep}{0pt}
\bibliographystyle{IEEEtran}
\bibliography{formation}

\begin{IEEEbiography}[{\includegraphics[
      width=1in,
      height=1.25in,
      clip,
      keepaspectratio
    ]{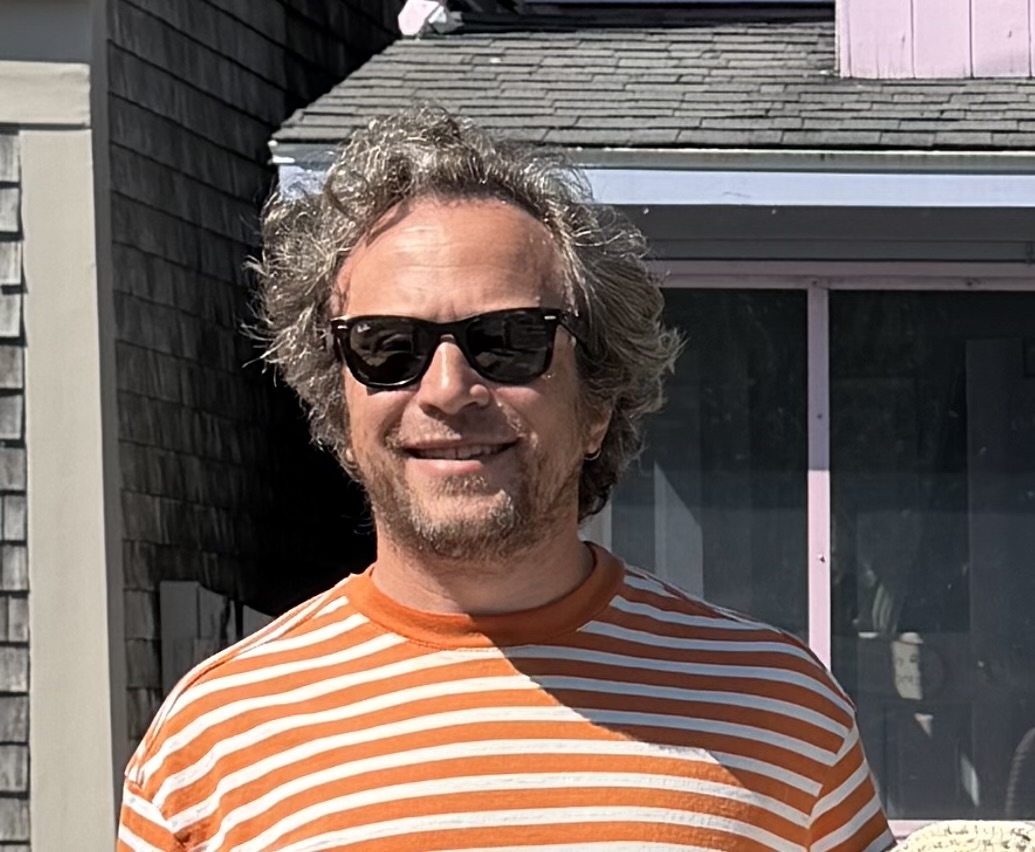}}]
{\bf Louis Theran} received BS, MS, and PhD degrees from the University of 
Massachusetts, Amherst, in 2006, 2007, and 2010.  He held postdoctoral
appointments at Temple University, Freie Universität Berlin and
an Aalto Science (ASci) Fellowship.
He is a  Lecturer in mathematics at the University of St Andrews.
Louis is a geometer who works on pure and applied problems, mainly around 
rigidity theory.
\end{IEEEbiography}
\begin{IEEEbiography}
[{\includegraphics[width=1in,height=1.25in,clip,keepaspectratio]{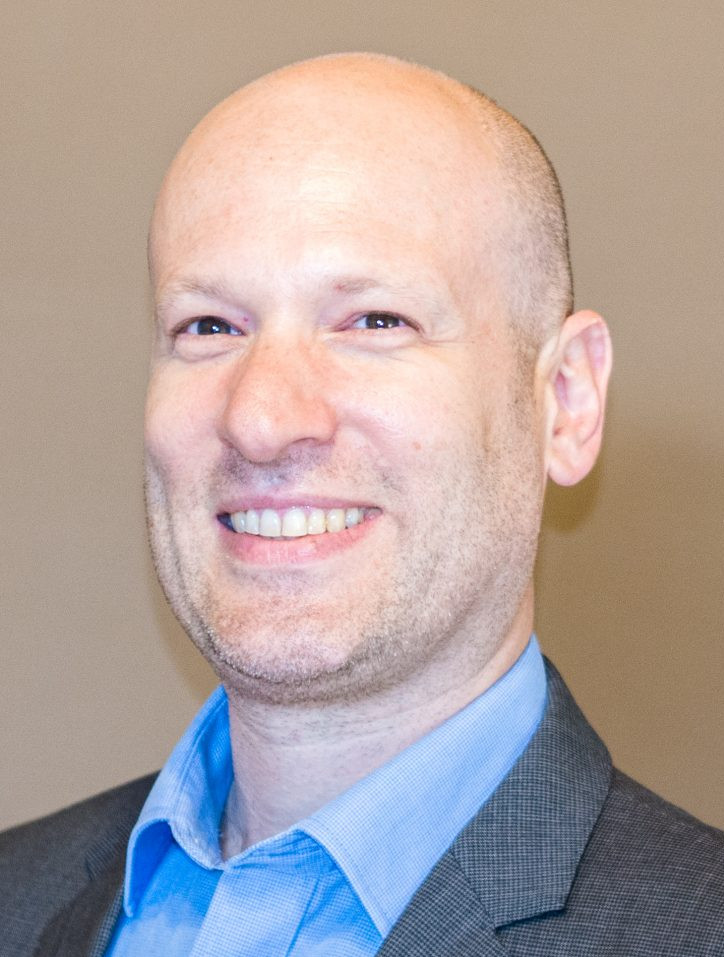}}]
  {\bf Daniel~Zelazo} (Senior Member, IEEE) received the B.Sc. and
  M.Eng. degrees in electrical engineering and computer science from
  the Massachusetts Institute of Technology, Cambridge, MA, USA, in
  1999 and 2001, respectively, and the Ph.D. degree in aeronautics and
  astronautics from the University of Washington, Seattle, WA, USA, in
  2009. From 2010 to 2012, he was a Postdoctoral Research Associate and
  Lecturer with the Institute for Systems Theory and Automatic Control,
  University of Stuttgart, Germany. He is a Professor of
  aerospace engineering with the Technion-Israel Institute of
  Technology, Haifa, Israel. His research interests include topics
  related to multiagent systems.
\end{IEEEbiography}
\begin{IEEEbiography}  [{\includegraphics[width=1in,height=1.25in,clip,keepaspectratio]{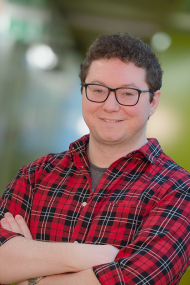}}]  {\bf Sean Dewar} is a Senior Postdoctoral Fellow at KU Leuven, Belgium.
  Prior to this position, he was a Research Scientist at RICAM,
  Austria, from 2019 to 2022, a Postdoctoral Fellow at the Fields
  Institute, Canada, in 2021, and a Heilbronn Fellow at the University
  of Bristol, UK, from 2022 to 2025.
  He received his Ph.D.~in Mathematics from Lancaster University, UK, in 2019.
  His research interests lie in applied algebraic geometry, discrete geometry and combinatorics. 
\end{IEEEbiography}
\begin{IEEEbiography} 
[{\includegraphics[width=1in,height=1.25in,clip,keepaspectratio]{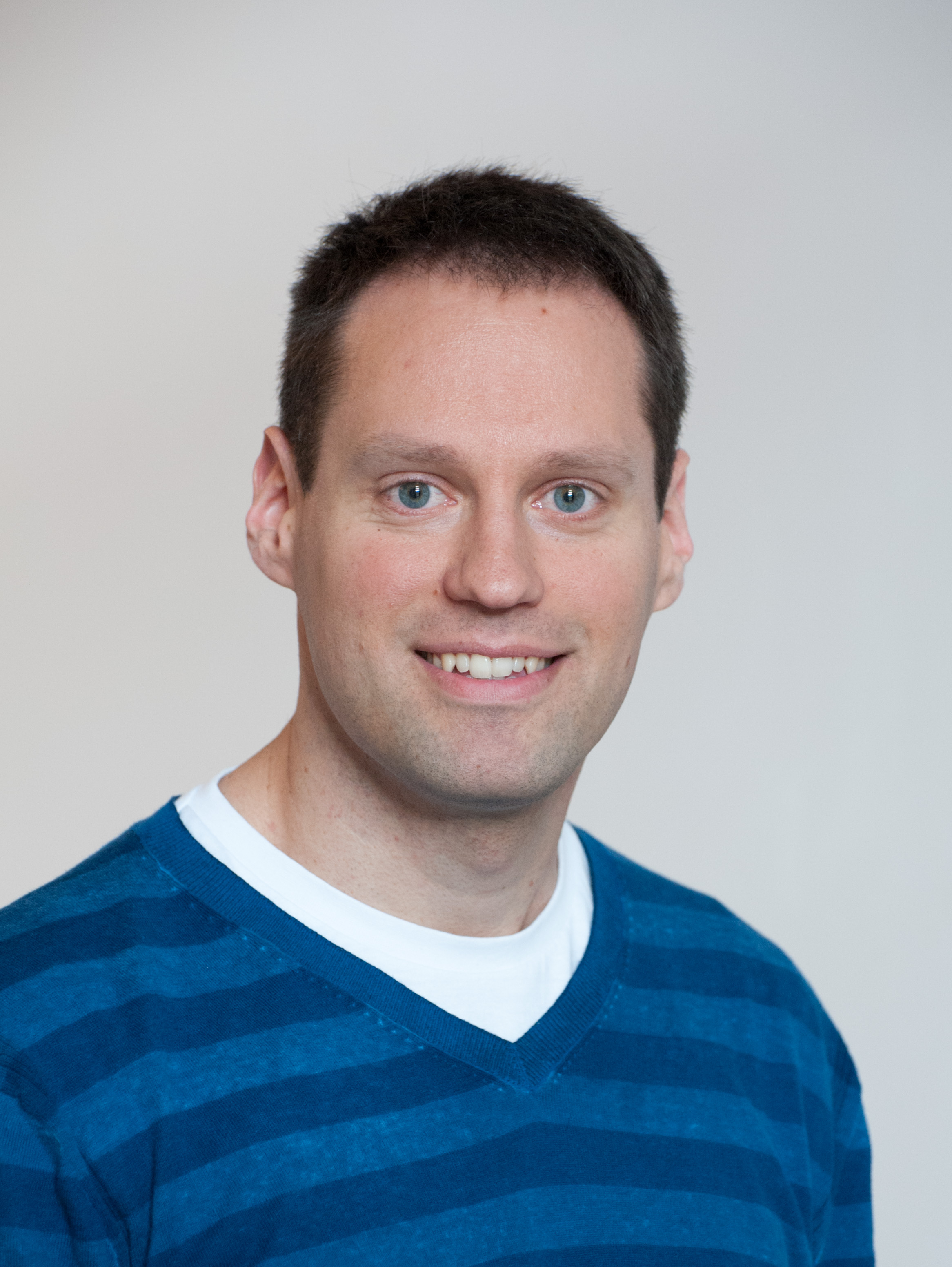}}] {\bf Bernd Schulze} is a Reader in Mathematics at Lancaster
  University, UK. Before arriving at Lancaster in 2012 he held
  postdoctoral positions at the TU Berlin, Germany, from 2009 to 2011
  and at the Fields Institute in Toronto, Canada, in 2011. Prior to
  that he studied mathematics and computer science at the FU Berlin,
  Germany, and at Western Michigan University, USA, and he received his
  Ph.D. in mathematics from York University, Canada, in 2009. 
  His 
  research interests lie in applied discrete geometry, 
  combinatorics, and algebraic methods in discrete mathematics.
\end{IEEEbiography}

\appendices
\makeatletter

\renewcommand{\@seccntformat}[1]{%
  \ifnum\pdfstrcmp{#1}{section}=0
    APPENDIX \csname the#1\endcsname.\quad
  \else
    \csname the#1\endcsname\quad
  \fi
}

\renewcommand{\section}{%
  \@startsection{section}{1}{\z@}%
  {0.7\baselineskip}%
  {0.25\baselineskip}%
  {\normalfont\normalsize\bfseries\centering}}

\makeatother
\section{Proof of Lemma \ref{lem: trapping}}
Let $x^*:=(p^*,m^*)$. Since $V$ is an open neighborhood of $x^*$ in
the smooth manifold $\mathcal S$, there is an $r>0$ such that the
closed ball $\overline W := \left\{(p,m)\in\mathcal
S:\|(p,m)-x^*\|\leq 2r\right\}$ is compact and contained in $V$. Let
$W$ denote the open ball that is the interior of $\overline W$. Since
$\nu$ is smooth, there exists $N>0$ such that $\|\nu_{(p,m)}\|\leq
N,$ for $(p,m)\in\overline W$.

Now, choose $T_0 > 0$ large enough that
$C\left(2+\frac{N}{c}\right)e^{-cT_0}<1$. We will use the continuous
dependence of the
trajectories on the finite interval $[0\;T_0]$, and then use the
exponential estimate \eqref{edge_expconv} to bound their
total displacement for $t\geq T_0$. Since $x^*$ is an equilibrium of
\eqref{eq:pm-dynamics}, there exists $0<\rho<r$ such that every
solution initialized at
  $x^0:=(p^0,m^0)\in
  U:=\{x\in\mathcal S:\|x-x^*\|<\rho\}$
satisfies $\|x(t)-x^*\|<r,
\; 0\leq t\leq T_0$. In particular, every such trajectory remains
in $W$ on $[0 \; T_0]$.

Fix an initial condition $x^0\in U$, and define the escape
time $T:=\sup\left\{\tau\geq T_0:
x(t)\in W \text{ for all } t\in[T_0,\tau]\right\}$. The supremum is over a set
containing $T_0$, so $T$ is well-defined.
Fix any $\tau$ with $T_0\leq\tau<T$. Since the trajectory remains in
$W\subseteq V$ for all $0\leq t\leq\tau$,
the estimate \eqref{edge_expconv} holds throughout this interval.

For $T_0\leq t\leq\tau$, the triangle inequality and
\eqref{edge_expconv} give
\[
  \begin{aligned}
    \|m(t)-m(T_0)\|
    &\leq \|m(t)-m^*\|+\|m(T_0)-m^*\| \\
    &\hspace{-1cm}\leq C\left(e^{-ct}+e^{-cT_0}\right)\|m^0-m^*\| \leq
    2Ce^{-cT_0}\rho,
  \end{aligned}
\]
where we used $t\geq T_0$ and $\|m^0-m^*\|\leq\|x^0-x^*\|<\rho$.

Next, compatibility and \eqref{edge_expconv} give
\[
  \begin{aligned}
    \|\dot p(t)\| &=\|\nu_{(p(t),m(t))}(m^*-m(t))\| \leq N\|m^*-m(t)\|\\
    &\leq NC e^{-ct}\|m^0-m^*\|<NC e^{-ct}\rho,
  \end{aligned}
\]
for $T_0\leq t\leq\tau$.  Integrating, we get
\[
  \|p(t)-p(T_0)\| \le \int_{T_0}^\infty \|\dot p(s)\| {\rm d}s \le
  (CN/c)e^{-cT_0}\rho.
\]
Combining these bounds, we obtain
\[
  \begin{aligned}
    \|x(t)-x(T_0)\|&= \|(p(t),m(t))-(p(T_0),m(T_0))\|\\
    &\leq \|p(t)-p(T_0)\|+\|m(t)-m(T_0)\|\\
    &\leq C\left(2+\frac{N}{c}\right)e^{-cT_0}\rho <\rho,
  \end{aligned}
\]
for $T_0\leq t\leq\tau$, where the last inequality follows from the
earlier choice of $T_0$.
Since $\|x(T_0)-x^*\|<r$, the triangle inequality gives
\[
  \|x(t)-x^*\| \leq \|x(t)-x(T_0)\|+\|x(T_0)-x^*\|<\rho+r<2r
\]
for all $T_0\leq t\leq\tau$. Thus the trajectory remains strictly inside
$W$ on $[T_0 \;\; \tau]$.

Suppose, for contradiction, that $T<\infty$. Since the estimate above
holds for every $\tau<T$, letting $\tau\to T$ from below and using
continuity of the trajectory gives
$\|x(T)-x^*\|\leq r+\rho<2r$. Hence $x(T)\in W$. Because $W$ is open
and the vector field is smooth,
the solution can be continued on some interval $[T\; T+\varepsilon)$
while remaining in $W$. This contradicts the definition of $T$ as the
escape time, and  therefore $T=\infty$. Combined with the
continuous-dependence bound on $[0\;\; T_0]$,
every solution initialized in $U$ remains in $W$ for all $t\geq 0$.
Thus $U\subseteq W\subseteq V$ satisfy the conclusion of the lemma.
\hfill $\qed$

\section{Proof of Lemma \ref{lem: rational maps generic properties}}
The characteristic polynomial $c_{\alpha({\bf t})}(x)$ has coefficients
in $\mathbb{Q}({\bf t})$.  By Cayley--Hamilton, the
transformation is invertible if and only if
the constant term $\det \alpha({\bf t})$ does not vanish.
If it vanishes identically,
then $\alpha({\bf t})$ is generically non-invertible.
Otherwise, $\det \alpha({\bf t})\neq 0$, so  has the form $p({\bf
t})/q({\bf t})$
with $p,q\in \mathbb{Q}[{\bf t}]$ both non-zero.  Away from the set of
${\bf t}$ such that $p({\bf t})$ or $q({\bf t})$ vanish, $\alpha({\bf t})$
is invertible, and, hence, $\alpha({\bf t})$ is generically invertible. This
is statement (a).

Statement (b) is similar, except we add to the exceptional set
the numerator and product of the denominators in the resultant
$\operatorname{Res}(c_{\alpha({\bf t})}(x),c_{\alpha({\bf t})}(-x);x)$,
which vanishes exactly when $\alpha({\bf t})$ has a pair of eigenvalues
that sum to zero.
When $\alpha({\bf t}_0)$ is Hurwitz for some specialization ${\bf t}_0$, then
the resultant is generically non-vanishing, since the sum of any
pair of eigenvalues of $\alpha({\bf t}_0)$  has negative real part.
\hfill$\qed$

\section{Proof of Lemma \ref{lem: rational maps compression}}
Let $A\in \mathbb{R}({\bf t})^{m\times m}$ be a matrix for the
family $\alpha({\bf t})$ in the standard basis, and suppose that
the rank of $A$ is $r$. By Gaussian elimination, there is a
basis $\{v_1, \ldots, v_r, w_1, \ldots, w_{m-r}\}$ of
$\mathbb{R}({\bf t})^m$ so that the $v_i$ span $\IM \alpha({\bf t})$.
Hence, the projection $\pi({\bf t})$ to $\IM \alpha({\bf t})$ is also
rational in
${\bf t}$, as is the inclusion $\pi^*({\bf t}): \IM \alpha({\bf
t})\hookrightarrow \mathbb{R}({\bf t})^m$.
Since the image is an invariant subspace, the restriction is
given by $\pi({\bf t})\circ \alpha({\bf t})\circ \pi^*({\bf t})$, which,
as a composition of rational maps is rational.
\hfill $\qed$.

\section{Proof of Lemma \ref{lem: decoupling}}
That (a) implies (b) is immediate from the definitions.
For (b) implies (a), let $v\in V_j$ be given and suppose that
$v = \alpha(w)$.  Among the pre-images $w$, choose one such that
$w\in V_{j'}$ with $j'$ minimal.  If $j' > j$, (b) implies that
there is a $w'\in V_{j'-1}$ such that $\alpha(w) = \alpha(w')$.
This contradicts minimality of $j'$, so (b) implies (a).

Now we address (c). From the
intertwining relation $\pi_j\circ \alpha = \alpha_j\circ\pi_j$ and
surjectivityy of $\pi_j : V_j\to W_j$, we have
$\pi_j(\alpha(V_j)) = \IM \alpha_j$.  Since the
kernel of $\pi_j\circ \alpha|_{V_j} = \alpha(V_j)\cap V_{j-1}$,
rank-nullity gives $\rk \alpha|_{V_j} = \rk \alpha_j +
\dim(\alpha(V_j)\cap V_{j-1})$.
By invariance of $V_{j-1}$, $\alpha(V_{j-1})\cap V_{j-1}\subseteq
\alpha(V_j)\cap V_{j-1}$,
and so we get the inequality $\rk \alpha|_{V_j} \ge  \rk \alpha_j +
\rk \alpha|_{V_{j-1}}$.
The inequality is an equality if and only if
$\alpha(V_{j-1})\cap V_{j-1} =  \alpha(V_j)\cap V_{j-1}$. This latter
condition is (b).

Finally (d) is equivalent to (c) by telescoping: (d) can hold if and only
if each of the inequalities from above
$\rk \alpha|_{V_j} \ge  \rk \alpha_j + \rk \alpha|_{V_{j-1}}$
is an equality.
\hfill $\qed$.

\section{Proof of Lemma \ref{lem: flag}}
The spectral statement is standard and follows from the
fact that the characteristic polynomial
$c_\alpha(x) = c_{\alpha_1}(x)\cdots c_{\alpha_k}(x)$.

Statement (a) follows from the fact that
Lemma \ref{lem: decoupling} (c) expresses the
decoupling condition in terms of ranks of maps that
are rational in {\bf t} and their restrictions to
rationally defined spaces.  Hence, the decoupling
condition is a generic property by Lemmas
\ref{lem: rational maps generic properties} and
\ref{lem: rational maps compression}.
The proof of (b) is similar, noting that
\eqref{eq: skew} is equivalent to the statement that
$\rk \alpha^2 = \rk \alpha$.\hfill $\qed$

\section{Proof of Lemma \ref{lem: flag skew}}
We first suppose that \eqref{eq: skew} does not
hold for $\alpha$; i.e., that there is a non-zero
$v\in \IM \alpha\cap \ker \alpha$. Let $j$ be minimal
such that $v\in V_j$; in particular $v\notin V_{j-1}$,
so $\pi_j(v)\neq 0$.
By the decoupling condition and Lemma \ref{lem: decoupling} (b), 
there is a $w\in V_j$ such that $\alpha(w) = v$.  We
now obtain, from the intertwining
relation $\pi_j\circ \alpha = \alpha_j \circ \pi_j$ on $V_j$, that 
$\pi_j(v) = \pi_j(\alpha(w)) = \alpha_j(\pi_j(w))$,
which shows that $\pi_j(v)\in \IM \alpha_j$.  We also
have, because $v\in \ker \alpha$,  $0 = \pi_j(\alpha(v)) = \alpha_j(\pi_j(v))$.
Hence, $\pi_j(v) \in \IM \alpha_j\cap \ker \alpha_j$ 
and is non-zero, so \eqref{eq: skew} fails for $\alpha_j$.

Now suppose that for some $j$, \eqref{eq: skew} does hold for
$\alpha$.  For convenience, set $M = \IM \alpha$ and
$M_j = M\cap V_j$.  The decoupling condition
says that $M_j = \alpha(V_j)$.  Injectivity of 
$\alpha$ on $M$ descends to $M_j$, and so 
$\alpha(M_j)\subseteq M_j$ is promoted to 
the equality $\alpha(M_j) = M_j$.  This yields
a slight strengthening of the decoupling 
condition: if $v\in M_j$, then there is a 
$u\in M_j$ (not just $V_j$) such that $\alpha(u) = v$.

Fix a $j$ and
let $w\in \IM \alpha_j\cap \ker \alpha_j$ be given.
By the intertwining relation
$\IM \alpha_j = \pi_j(\alpha(V_j)) = \pi_j(M_j)$,
so there is a $v\in M_j$ such that $\pi_j(v) = w$.
Since $w\in \ker \alpha_j$,
intertwining gives $0 = \alpha_j(w) = \pi_j(\alpha(v))$.
Hence, $\alpha(v)\in M_{j-1}$.  From above, there is a 
$u\in M_{j-1}$ such that $\alpha(u) = \alpha(v)$.
Injectivity of $\alpha$ on $M_j$ implies that $u = v$, and 
so $0 = \pi_j(u) = \pi_j(v) = w$.  This shows that
\eqref{eq: skew} holds for $\alpha_j$.
\hfill $\qed$

\section{Proof of Lemma \ref{lem: multiaffine}}\label{pf_lemma3}
If $A$ vanishes identically, it does so on every ${\bf x}\in \{0,1\}^m$.
For the other direction, the key observation is that, because $A$
is multiaffine, the variable supports of its monomials are
distinct.  Suppose that $A\neq 0$, and let
$M$ be a monomial with support of minimum
cardinality.  Setting every variable in the support of $M$ to be
one and the rest of the variables to be zero produces an
${\bf x}\in \{0,1\}^m$ such that $A({\bf x}) = M({\bf x}) \neq 0$, as required.
\hfill $\qed$

\section{Proof of Lemma \ref{lem: weighted determinant}}\label{pf_lemma4}
Using Cauchy--Binet twice, we compute
\begin{align*}
  \det B^\top A({\bf t})B &
  = \sum_S \det B^\top[-,S]\det (A({\bf t})B)[S,-] \\
  &\hspace{-1cm} = \sum_S \sum_T \det B[S,-]\det B[T,-] \det A({\bf t})[S,T],
\end{align*}
where $S$ and $T$ are subsets of $[m]$ with cardinality $p$, and
$A[S,T]$ denotes the submatrix with
rows indexed by $S$ and columns indexed by $T$, and $B[S,-]$ denotes
the submatrix formed from the
rows indexed by $S$.  This exhibits the determinant on the l.h.s. as a linear
(the minors of $B$ are scalars) combination of $p\times p$ minors of
$A({\bf t})$,
which are multiaffine by hypothesis. Multiaffine polynomials are
closed under linear combinations,
giving the desired statement.
\hfill $\qed$

\end{document}

LST: ARCHIVED HERE FOR NOW

{\color{red}SEAN: Work in progress:

  We now provide an exact combinatorial characterisation of acyclic
  weakly admissible orientations.
  As a consequence of Theorem \ref{thm: acyclic},
  this provides an exact combinatorial characterisation of acyclic
  directed controllers which describe locally exponentially stable
  dynamical systems.

  We define an acyclic orientation $\overrightarrow {\mathcal G}$ to be
  \emph{$d$-constructible} if the vertices of $\overrightarrow
  {\mathcal G}$ can be ordered $v_1,\ldots,v_n$ so that each vertex
  $v_i$ has out-degree $i-1$ if $i \leq d$, and out-degree $d$ otherwise.
  If a graph ${\mathcal G}$ has an acyclic orientation that is
  $d$-constructible, then it is minimally $d$-rigid;
  this follows from observing that ${\mathcal G}$ can be formed from
  the complete graph on the vertex set $\{v_1,\ldots,v_{d+1}\}$ by
  sequentially adding vertices adjacent to exactly $d$ previous vertices.

  \begin{lemma}\label{lem: degree_condition}
    Let ${\mathcal G}$ be a $d$-rigid graph with weakly admissible
    orientation $\overrightarrow {\mathcal G}$.
    Then, given $\overrightarrow{\deg}(v)$ is the out-degree of a vertex $v$,
    \begin{equation*}
      \sum_{v \in V} \min \{\overrightarrow{\deg}(v), d\} \geq d|V| -
      \binom{d+1}{2}.
    \end{equation*}
  \end{lemma}

  \begin{proof}
    Fix a generic choice of $p$.
    The left hand side of the inequality is exactly the rank
    $\overrightarrow{R}(p)$, and so the result follows from the
    inequality $\rk Z \leq \rk \overrightarrow{R}(p)$.
  \end{proof}

  \begin{lemma}\label{lem: acyclic_characterisation}
    Let ${\mathcal G}$ be a minimally $d$-rigid graph with acyclic
    orientation $\overrightarrow {\mathcal G}$.
    Then the following are equivalent:
    \begin{enumerate}[(a)]
      \item $\overrightarrow {\mathcal G}$ is weakly admissible;
      \item $\overrightarrow {\mathcal G}$ is $d$-constructible.
    \end{enumerate}
  \end{lemma}

  \begin{proof}

  \end{proof}

  \begin{theorem}\label{thm: acyclic_characterisation}
    Let ${\mathcal G}$ be a $d$-rigid graph with acyclic orientation
    $\overrightarrow {\mathcal G}$.
    Then the following are equivalent:
    \begin{enumerate}[(a)]
      \item $\overrightarrow {\mathcal G}$ is weakly admissible;
      \item $\overrightarrow {\mathcal G}$ contains a spanning directed
        subgraph that is $d$-constructible;
      \item every spanning directed subgraph of $\overrightarrow {\mathcal
        G}$ is $d$-constructible.
    \end{enumerate}
  \end{theorem}

  \begin{proof}
It is clear that 3)$\Rightarrow$2), and 2)$\Rightarrow$1) as weak
admissibility is preserved under edge addition.
......
\end{proof}

}

LST: ARCHIVED HERE

